\numberwithin{equation}{section}
\newcommand{\sub}{\subseteq}
\newcommand{\R}{\mathbb{R}}
\newcommand{\N}{\mathbb{N}}
\newcommand{\eps}{\varepsilon}
\newcommand{\M}{\mathcal M}
\newcommand{\orb}{\mathrm{orb}}
\newcommand{\dist}{\mathrm{dist}}
\numberwithin{chap}{section}
\newtheorem{thm}{Theorem}
\numberwithin{thm}{section}
\newtheorem{conj}[thm]{Conjecture}
\newtheorem{prop}[thm]{Proposition}
\newtheorem{defn}[thm]{Definition}
\newtheorem{lem}[thm]{Lemma}
\newtheorem{cor}[thm]{Corollary}
\newtheorem{condition}[thm]{Condition}
\DeclarePairedDelimiter{\norm}{\lVert}{\rVert}
\let\oldnorm\norm
\def\norm{\@ifstar{\oldnorm}{\oldnorm*}}
\begin{document}

\pagestyle{myheadings} \thispagestyle{empty} \markright{}
\title{Decoupling for surfaces with radial symmetry}

\author{Jianhui Li and Tongou Yang}
\address[Jianhui Li]{Department of Mathematics, Northwestern University\\
Evanston, IL 60208, United States
}
\email{jianhui.li@northwestern.edu}

\address[Tongou Yang]{Department of Mathematics, University of California\\
Los Angeles, CA 90095, United States}
\email{tongouyang@math.ucla.edu\\ 
tomyangcuhk@gmail.com}

\date{}

\begin{abstract}
    We utilise the two principles of decoupling introduced in \cite{LiYang2024} to prove decoupling for two types of surfaces exhibiting radial symmetry. The first type are surfaces of revolution in $\R^n$ generated by smooth surfaces in $\R^3$. The second type of surfaces are graphs of trivariate homogeneous smooth functions of nonzero degree.
\end{abstract}

\maketitle

\section{Introduction}

Introduced by Wolff \cite{Wolff2000} and further developed by numerous mathematicians including the work of \cite{LP06,LW02, Bo13Tri}, the revolutionary Fourier analytic tool, decoupling, has been applied to tackle a wide range of problems in subjects such as harmonic analysis, number theory, additive combinatorics and partial differential equations.

In the breakthrough work of \cite{BD2015,BD2017}, Bourgain-Demeter proved sharp decoupling inequalities for $C^2$ surfaces in $\R^n$ with nonzero Gaussian curvature. In \cite{BDG2016}, Bourgain-Demeter-Guth (see also \cite{shortproofmomentcurve}) proved sharp decoupling for $C^2$ curves in $\R^n$ with nonzero torsion.

A natural follow-up is to understand to what extent these decoupling results can be generalised to manifolds with vanishing curvature. Current literature in this direction include, but not limited to, \cite{BGLSX,Demeter2020,BDK2019,Yang2,LiYang,LiYang2023,Kemp1,Kemp2,Kemp2024,GMO24,LiYang2024}.

In \cite{LiYang2024}, we formulated two principles of decoupling, which are mainly based on the Pramanik-Seeger iterations \cite{PS2007}. Continuing from \cite{LiYang2024}, in this paper, we prove sharp decoupling inequalities for two classes of surfaces, namely, surfaces of revolution in $\R^n$ generated by smooth surfaces in $\R^3$, and graphs of tri-variate homogeneous smooth functions of nonzero degree. See our main Theorems \ref{thm:main_revolution} and \ref{thm:homo} below.

\subsection{Decoupling inequalities for surfaces in $\R^n$}

Let us first formulate the decoupling inequalities in this article.

\begin{defn}\label{defn:decoupling}
    Given a compact subset $S\sub \R^n$ and a finite collection $\mathcal R$ of parallelograms $R\sub \R^n$ with a bounded overlapping function\footnote{See Definition \ref{defn:enlarged_overlap}.}. For Lebesgue exponents $p,q\in [2,\infty]$, we define the $\ell^q(L^p)$-decoupling constant $\mathrm{Dec}(S,\mathcal R,p,q)$ to be the smallest constant $\mathrm{Dec}$ such that
    \begin{equation}\label{eq:defn_decoupling}
        \norm{\sum_{R}f_R}_{L^p(\R^n)} \leq \mathrm{Dec}\,\, (\#\mathcal R)^{\frac{1}{2}-\frac{1}{q}}\norm{\norm{f_R}_{L^p(\R^n)}}_{\ell^q(R\in \mathcal R)}
    \end{equation}
    for all smooth test function $f_R$ Fourier supported on $R\cap S$.
    
    Given a subset $S\sub \R^n$, we say that $S$ can be $\ell^q(L^p)$ decoupled into the parallelograms $R\in \mathcal R$ at the cost of $K$, if $S\sub \cup \mathcal R$ and $\mathrm{Dec}(S,\mathcal R,p,q)\le K$.
\end{defn}
We also recommend that the reader refer to \cite{LiYang2024} for more general formulations of decoupling.

Since we often deal with decoupling for graphs of functions, we introduce the following notation for simplicity.
\begin{itemize}
    \item For a subset $S\sub \R^n$ and $\delta>0$, we denote by $N_\delta(S)$ the $\delta$-neighbourhood of $S$, namely, 
    \begin{equation*}
        N_\delta(S):=\{x+c:x\in S,|c|\le \delta\}.
    \end{equation*}
    \item For a subset $\Omega\sub \R^{n-1}$, $\phi:\Omega\to \R$ be $C^2$ and $\delta>0$, we denote by $N^\phi_\delta(\Omega)$ the $\delta$-{\it vertical neighbourhood} of $\Omega$, namely, 
\begin{equation*}
    N^\phi_\delta(\Omega):=\{(x,y):x\in \Omega,|y-\phi(x)|\le \delta\}.
\end{equation*}
\end{itemize}
\begin{defn}\label{defn:graphical_decoupling}
    Let $\Omega\sub \R^k$, $\phi:\Omega\to \R$ be $C^2$. Let $\delta>0$. 
    \begin{itemize}
        \item We say that $\omega\sub \Omega$ is $(\phi,\delta)$-flat, or $\phi$ is $\delta$-flat over $\omega$, if there exists an affine function $\Lambda$ such that
\begin{equation*}
    \sup_{x\in \omega}|\phi(x)-\Lambda(x)|\le \delta.
\end{equation*}
In this case, $N_\delta^\phi(\omega)$ is equivalent to a parallelogram (see Subsection \ref{sec:equivalence_objects}).

\item We say that a parallelogram $\Omega_0\sub \Omega$ can be $\phi$-$\ell^q(L^p)$ decoupled into $(\phi,\delta)$-flat parallelograms $\omega$ at scale $\delta$ (at cost $C$), if $N_\delta^\phi(\Omega_0)$ can be $\ell^q(L^p)$ decoupled into $N_\delta^\phi(\omega)$ which are almost parallelograms\footnote{By decoupling into almost parallelograms $R$, we mean decoupling into actual parallelograms equivalent to $R$.} (at cost $C$). When the exponents $p,q$ are clear from the context, we simply say that $\Omega_0$ can be $\phi$-decoupled into parallelograms $\omega$ at scale $\delta$.
    \end{itemize}
    
For a parallelogram $R\sub \R^n$, we refer to the smallest dimension of $R$ by its {\it width} (see \eqref{eqn:width} for the precise definition).    
    
\end{defn}

\subsection{Main decoupling results}\label{sec_main_decoupling_result}
We are now ready to state our main theorems in this paper.
\begin{thm}[Decoupling for surfaces of revolution generated by $2$-surfaces]\label{thm:main_revolution_original}
    Let $M_0$ be a compact piece of a smooth surface in $\R^3$. For $n> 3$, let $H$ be a two-dimensional subspace of $\R^n$, with positive distance from $M_0$ (naturally embedded into $\R^n$). We rotate $M_0$ about $H$ to obtain a surface $M\sub \R^n$ of codimension $1$. Then for every $0<\delta\ll_{M}1$, $\eps\in (0,1]$ and $2\le p\le \frac{2(n+1)}{n-1}$, $N_\delta(M)$ can be $\ell^p(L^p)$ decoupled into a family $\mathcal P_{M,\eps,\delta}$ of 
    parallelograms in $\R^n$ of width at least $\delta$ in the sense of Definition \ref{defn:decoupling}, at cost $C_\eps \delta^{-\eps}$. Here $C_\eps$ and the overlap function of $\mathcal P_{M,\eps,\delta}$ depend only on $M,\eps$.
\end{thm}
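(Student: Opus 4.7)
My plan is to reduce the theorem to a convenient graph parametrization and then apply the two principles of decoupling of \cite{LiYang2024}. After finitely many partitions, a rotation of $\R^3$, and a rescaling, I may assume that $M_0$ is the graph $\{(h,\phi(h)):h\in\Omega\}$ of a smooth function $\phi>0$ on a compact $\Omega\subset H\cong \R^2$, with $\phi\approx 1$. Rotating about $H$ in $\R^n$, the surface $M$ is parametrized by $(h,\omega)\in\Omega\times S^{n-3}$ as $(h,\phi(h)\omega)$. Covering $S^{n-3}$ by finitely many graph charts $\omega=(\omega',\sqrt{1-|\omega'|^2})$ and changing variables $u=\phi(h)\omega'$, I realize $M$ locally as the graph over $(h,u)\in\Omega\times V\subset \R^{n-1}$ of
\begin{equation*}
\psi(h,u)=\sqrt{\phi(h)^2-|u|^2}=\phi(h)-\frac{|u|^2}{2\phi(h)}+O(|u|^4),
\end{equation*}
whose Hessian at $u=0$ is block-diagonal with blocks $\nabla^2\phi(h)$ (possibly degenerate) and $-I_{n-3}/\phi(h)$ (always nondegenerate).

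Next I would iterate on the angular scale $r$, starting from $r_0=O(1)$ and halving $r$ at each step until $r=\sqrt{\delta}$. At each step I decompose $V$ into balls of radius $r$; after subtracting the affine part of $\psi$ on a ball centered at $u_0$, the leading nontrivial dependence on $u$ is the paraboloid $-|u-u_0|^2/(2\phi(h))$. One of the principles of \cite{LiYang2024}, of product type, then splits the decoupling at scale $r$ into an angular part (decoupling of this paraboloid, essentially equivalent to decoupling the sphere $S^{n-3}$) and a planar part (a $\phi$-decoupling of $M_0$ in the sense of Definition \ref{defn:graphical_decoupling}), with scales adapted to $r$. The other principle, of Pramanik--Seeger type, refines the angular scale from $r$ to $r/2$ at a uniform cost $\delta^{-\eps'}$. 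When $r=\sqrt{\delta}$, the residual angular caps are affine to accuracy $\delta$ and no further angular decoupling is needed.

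The main obstacle is that $M_0$ is only assumed smooth, so the planar ($\phi$-) step cannot directly invoke the Bourgain--Demeter decoupling at the target exponent $p_c=\tfrac{2(n+1)}{n-1}$. The saving grace is that the $(n-3)$-dimensional sphere fiber already contributes $n-3$ nondegenerate principal curvatures to $M$, so the second fundamental form of $M$ has rank at least $n-3$ everywhere, and the corresponding ``cylindrical'' critical exponent $\tfrac{2(n-1)}{n-3}$ strictly exceeds $p_c$. The iteration must therefore be arranged so that the small angular gain at each scale, coming from this extra sphere curvature, absorbs the potential loss from the degeneracy of $\phi$. Executing this trade-off carefully --- verifying the $\phi$-flatness condition at each intermediate scale, patching across the finitely many sphere charts, and confirming the bounded-overlap property (Definition \ref{defn:decoupling}) of the final family of parallelograms --- is the main technical task, and it is precisely here that the abstract structure of the two principles of \cite{LiYang2024} is essential.
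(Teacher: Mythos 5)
Your opening reduction is the gap: you assert that after finitely many partitions, a rotation of $\R^3$, and a rescaling, one may assume $M_0=\{(h,\phi(h)):h\in\Omega\}$ is a radial graph over the axis plane $H$, with $\phi$ the distance from $H$. This is not achievable in general. Any rotation of $\R^3$ rotates the pair $(M_0,H)$ together (rotating $M_0$ but not $H$ would change the surface of revolution), and being a radial graph over $H$ is a rotation-invariant property of that pair. There are smooth $M_0$, at positive distance from $H$, for which $T_pM_0$ contains the radial direction at some points---for instance a cylinder in $\R^3$ whose generators are tangent to radial rays, or the torus piece near $\theta=\pi/2$ in the paper's own illustration---and near such points $M_0$ admits no local parametrization of the form $(h,\phi(h))$ over $H$. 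Partitions and rescalings do not remove this either. This is exactly the paper's Type II case \eqref{eqn:TypeII_intro}, $(s_1,x(s_1,r),rt,r\sqrt{1-|t|^2})$, where the radius $r$ is itself a free parameter rather than a function of $H$-coordinates.

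The omission matters quantitatively, not just notationally. Your strategy leans on the second fundamental form of $M$ having rank $\ge n-3$ everywhere via the sphere fiber, giving the cylindrical room $\tfrac{2(n-1)}{n-3}>p_c$. In Type I (radial graph over $H$) this is correct: the angular curvatures are comparable to $1/\phi(h)\sim 1$. In Type II, however, the angular principal curvatures are proportional to $\partial_r x$ (Proposition \ref{prop_curvature_second_case}), which vanishes precisely where the radial-graph parametrization breaks down and can vanish to infinite order for smooth $x$; the Gaussian curvature of $M$ can then be flat in all $n-1$ directions simultaneously, and the sphere fiber buys you nothing there. Handling this is the bulk of the paper: Sections \ref{sec:pseudo-polynomials}--\ref{sec:nondegenerate_case} set up pseudo-polynomials, the rescaling-invariant family $\mathcal M$ with degeneracy determinant $\partial_2 x$, the bivariate sublevel-set decoupling of Theorem \ref{thm:general_sublevel_set}, and the Pramanik--Seeger iteration of Theorem \ref{thm:nondegenerate_decoupling_PS}. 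Your Type I portion is essentially the radial principle (Theorem \ref{thm:radial_principle}) unfolded by hand, matching the paper's short Section \ref{sec:typeI}; the Type II half of the argument is missing.
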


As a special case, we obtain a corresponding decoupling result for radial functions, generalising \cite{BDK2019} and \cite[Corollary 1.4]{LiYang2024}.
\begin{cor}[Decoupling for radial functions]\label{cor_radial}
    Let $A_0$ denote the annulus $\{x\in \R^{n-1}:1\le |x|\le 2\}$, and let $\phi:A_0\to \R$ be smooth. Assume $\phi$ is radial, namely, $\phi(x)=\phi(|x|)$ for every $1\le |x|\le 2$. Then for every $0<\delta\ll_\phi 1$ and $2\le p\le \frac{2(n+1)}{n-1}$, $A_0$ can be $\phi$-$\ell^p(L^p)$ decoupled into a family $\mathcal P_{\phi,\delta}$ of almost 
    parallelograms in $\R^{n-1}$ at scale $\delta$ and cost $C_\eps \delta^{-\eps}$ in the sense of Definition \ref{defn:graphical_decoupling}, for every $\eps\in (0,1)$. Here, each parallelogram in $\mathcal P_{\phi,\delta}$ has width $\gtrsim_\phi \delta^{1/2}$, the
    overlap function of $\mathcal P_{\phi,\delta}$ depends only on $\phi$, and the cost $C_\eps$ depends only on $\phi,\eps$. 
    
    Moreover, if $D^2\phi$ is positive semidefinite, then this $\ell^p(L^p)$ can be strengthened to $\ell^2(L^p)$ decoupling. The same holds true if $\phi$ is radial and smooth in the unit ball.
\end{cor}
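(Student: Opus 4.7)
The plan is to derive both assertions of Corollary \ref{cor_radial} from Theorem \ref{thm:main_revolution_original} together with the principles of \cite{LiYang2024}.

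First I would realise the radial graph as a surface of revolution to which Theorem \ref{thm:main_revolution_original} applies. Writing $\phi(x) = \psi(|x|)$, decompose $\R^{n-1} = \R^2 \oplus \R^{n-3}$ and embed $\R^3 \hookrightarrow \R^n$ via $(u, v, t) \mapsto (u, v, 0, \ldots, 0, t)$; then the $2$-surface
\[
M_0 = \{(u, v, \psi(\sqrt{u^2 + v^2})) : 1 \le \sqrt{u^2 + v^2} \le 2\} \sub \R^3,
\]
rotated about the $2$-plane $H = \mathrm{span}(e_1, e_n) \sub \R^n$, traces out precisely the graph of $\phi$ over $A_0$. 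The positive-distance hypothesis on $H$ fails at points of $M_0$ with $v = 0$, which I would repair by a finite partition-of-unity reduction: cover $S^{n-2}$ by open caps $V_j$, and for each $j$ choose an orthonormal frame so that the corresponding rotation axis $H_j$ has positive distance from the piece of $M_0$ generating the sector $\{r \sigma : r \in [1,2], \sigma \in V_j\}$. Splitting $\phi$ by a smooth partition of unity subordinate to these sectors and applying Theorem \ref{thm:main_revolution_original} to each piece, then summing via the triangle inequality, yields the $\ell^p(L^p)$ decoupling at cost $C_\eps \delta^{-\eps}$ for $2 \le p \le \tfrac{2(n+1)}{n-1}$.

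For the convex refinement, I would combine two $\ell^2(L^p)$ decouplings via the combining principle of \cite{LiYang2024}. Convexity of $\phi$ forces $\psi$ to be convex and nondecreasing on $[1,2]$, so the profile curve $r \mapsto (r, \psi(r)) \sub \R^2$ is convex and admits $\ell^2(L^p)$ Bourgain--Demeter decoupling at scale $\delta$ for $p \le 6$; simultaneously, each slice of the graph over the sphere of radius $r$ is an affine copy of $S^{n-2} \sub \R^{n-1}$, with positive Gaussian curvature, enjoying $\ell^2(L^p)$ decoupling for $p \le \tfrac{2n}{n-2}$. Both ranges contain $[2, \tfrac{2(n+1)}{n-1}]$ for $n \ge 3$, so the combining principle produces $\ell^2(L^p)$ decoupling of the full radial graph at cost $C_\eps \delta^{-\eps}$ in this range. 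The unit-ball extension follows by dyadic annular decomposition, with the innermost ball of radius $\lesssim \delta^{1/2}$ absorbed via flat decoupling using smoothness of $\phi$ at the origin.

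The main obstacle I anticipate is the convex refinement: one must match the angular/radial product parametrization to the actual collection of parallelograms at scale $\delta$ on the radial graph and verify that the combining principle is applied with bounded overlap. The partition-of-unity reduction for the first claim and the ball extension are routine.
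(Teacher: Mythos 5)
Your first paragraph is essentially correct and is the natural route: the graph of a radial $\phi=\psi(|\cdot|)$ over $A_0$ is indeed the revolution of the bivariate graph $M_0 = \{(u,v,\psi(\sqrt{u^2+v^2}))\}\subset\R^3$ about $H=\mathrm{span}(e_1,e_n)$ (after a coordinate permutation, so the ``natural embedding'' of Theorem~\ref{thm:main_revolution_original} is met), and the positive-distance obstruction near $v=0$ is repaired by finitely many choices of axis $H_j=\mathrm{span}(u_j,e_n)$. Two small corrections: what one splits is the annular domain (equivalently, the Fourier support), not ``$\phi$'' itself, so the phrase ``partition of unity subordinate to sectors'' is a trivial-decoupling step on $O(1)$ pieces rather than a literal unity splitting of the function; and the sets on which $\mathrm{dist}(\cdot,H_j)\gtrsim 1$ are complements of pairs of caps around $\pm u_j$, not caps, though this is cosmetic.

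The genuine gap is in the convex $\ell^2$ refinement, where the ``combining two $\ell^2$ decouplings in the ranges $[2,6]$ and $[2,\tfrac{2n}{n-2}]$ and taking the intersection'' step is not a valid argument. There is no principle by which $\ell^2(L^p)$ decoupling of the $1$-dimensional profile curve in $\R^2$ together with $\ell^2(L^p)$ decoupling of $S^{n-2}$ in $\R^{n-1}$ assembles into $\ell^2(L^p)$ decoupling of the $(n-1)$-dimensional radial graph in $\R^n$ for $p$ in the common range; a naive cylindrical/Fubini pass fails because the frequency slab over a fixed $t$ (resp. $s$) in the sum surface is not a $\delta$-neighbourhood of a lower-dimensional surface but an $O(1)$-thick slab, and indeed the sharp $\R^n$ range is $\tfrac{2(n+1)}{n-1}$, strictly smaller than $\tfrac{2n}{n-2}$. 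The mechanism that actually combines the radial and angular directions is the radial principle (Theorem~\ref{thm:radial_principle}), which reduces the matter to decoupling the sum surface $r(s)+\sqrt{1-|t|^2}$ in $\R^n$; that decoupling is then furnished by Theorem~\ref{thm:Bourgain_Demeter} / Corollary~\ref{cor:decoupling_condition_bivariate}, and the $\ell^2$ upgrade requires the sum to be convex. Concretely, in the $2$-surface-plus-$2$-plane parametrisation from your first paragraph the Type~I data is $r(s_1,s_2)=\sqrt{(\psi^{-1}(s_2))^2-s_1^2}$ with $t\in\R^{n-3}$, and Proposition~\ref{prop:curvature_first_case} shows $r$ is concave precisely when the radial graph is convex, so the convex Type~I case of Theorem~\ref{thm:main_revolution} applies directly; this is the argument the paper intends, and it avoids the range bookkeeping entirely. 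One should also address the possible degeneration $\psi'=0$: convexity and monotonicity force $\psi$ to be constant on some initial subinterval $[1,\rho_0]$, on which the graph is flat and trivially decouples, with the Type~I parametrisation applying on $[\rho_0+\epsilon,2]$; the short transition zone needs a word. Finally, your unit-ball dyadic argument is fine for the $\ell^p$ statement (with uniformly rescaled annuli and the innermost ball of radius $\lesssim\delta^{1/2}$ handled by flatness), but you do not say why the $\ell^2$ conclusion survives the $O(\log\frac1\delta)$-fold annular decomposition, which is the content of the last sentence of the corollary.
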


By a trivial partition argument, Theorem \ref{thm:main_revolution_original} can be rephrased into the following version.
\begin{thm}\label{thm:main_revolution}
    Consider a surface $M$ parametrised by either one of the following forms:
     \begin{equation}\label{eqn:TypeI_intro}
        \text{(Type I) }\quad \left\{\left(s,r(s)t,r(s)\sqrt{1-|t|^2}\right):s\in [-1,1]^2,t\in \R^l,|t|\le 1/2
        \right\},
    \end{equation}
    where $r:[-1,1]^2\to [1,2]$ is a smooth function, or
    \begin{equation}\label{eqn:TypeII_intro}
        \text{(Type II) }\quad \left\{\left(s_1,x(s_1,r),rt,r\sqrt{1-|t|^2}\right):s_1\in [-1,1],r\in [1,2],t\in \R^l,|t|\le 1/2\right\},
    \end{equation}
    where $x:[-1,1]^2\to \R$ is a smooth function. Then for every $0<\delta\ll_{M}1$, $\eps\in (0,1]$ and $2\le p\le \frac{2(4+l)}{2+l}$, $N_\delta(M)$ can be $\ell^p(L^p)$ decoupled into a family $\mathcal P_{M,\eps,\delta}$ of parallelograms of width at least $\delta$ in the sense of Definition \ref{defn:decoupling} at cost $C_{\eps} \delta^{-\eps}$, where $C_\eps$ and the overlap function of $\mathcal P_{M,\eps,\delta}$ depend only on $M,\eps$. 
    
    Moreover, if $M$ has a positive semidefinite second fundamental form and is of Type I, then this $\ell^p(L^p)$ decoupling can be upgraded to $\ell^2(L^p)$ decoupling.
\end{thm}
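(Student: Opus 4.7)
The plan is to exploit the radial symmetry of $M$ by combining two decoupling results via the two principles of \cite{LiYang2024}: Bourgain--Demeter decoupling for the $l$-sphere (controlling the angular $t$-direction), and decoupling for smooth $2$-surfaces in $\R^3$ (controlling the ``generating'' direction parametrized by $s$ in Type I and $(s_1,r)$ in Type II). For each fixed base parameter, the $t$-fiber is a piece of an $l$-sphere of radius bounded away from $0$; conversely, for each fixed $t_0$, the base surface embedded in $\R^{l+3}$ has its essential curvature content captured by a smooth $2$-surface in $\R^3$ --- the graph of $r$ over $[-1,1]^2$ (Type I) or the graph of $x$ over $[-1,1]\times[1,2]$ (Type II), with the remaining coordinates being linear functions of the essential ones.

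After standard dyadic/angular reductions in the $t$-variable, the core of the argument is a two-step decoupling. First, apply Bourgain--Demeter's $\ell^2(L^p)$ decoupling for the $l$-sphere, which is sharp for $p\le 2(l+2)/l$, to decompose the $t$-fiber at scale $\delta$ into angular caps of size $\delta^{1/2}$. Second, for each angular cap, apply the $\ell^p(L^p)$ decoupling for smooth $2$-surfaces in $\R^3$ at scale $\delta$ with cost $C_\eps \delta^{-\eps}$, valid for $p\le 6$; when $M$ is a convex Type I surface, the upgraded $\ell^2(L^p)$ decoupling for convex smooth $2$-surfaces is available. The two principles from \cite{LiYang2024} provide the formal mechanism to lift and interleave these fiber- and base-decouplings into a joint decoupling for $M$, with a Pramanik--Seeger iteration absorbing the errors at each intermediate scale.

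The range $p\le 2(l+4)/(l+2)$ emerges as the natural meeting point of the two ingredients: an elementary check shows $2(l+4)/(l+2)\le \min\{2(l+2)/l,\,6\}$ for all $l\ge 1$, so both ingredient decouplings are in their admissible range, and the combination is sharp for a codimension-$1$ surface of dimension $l+2$ in $\R^{l+3}$. The $\ell^p(L^p)$-versus-$\ell^2(L^p)$ distinction is inherited entirely from the generating $2$-surface decoupling, since sphere decoupling is always of $\ell^2(L^p)$ type.

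The hard part will be verifying, at each scale of the Pramanik--Seeger iteration, that after parabolic rescaling the rescaled fiber remains close enough to a sphere and the rescaled base close enough to the graph of a smooth function, uniformly in the base parameter. The coupling between the radius ($r(s)$ for Type I, $r$ for Type II) and the angular variable $t$ produces Jacobian and cross-term errors that must be tracked precisely to verify the flatness hypotheses in Definition \ref{defn:graphical_decoupling} and the bounded-overlap condition in Definition \ref{defn:decoupling}. This is the technical heart of the argument and is precisely what the parallel-slice and iteration principles in \cite{LiYang2024} are designed to address.
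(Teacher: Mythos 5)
Your picture is roughly right for Type I but misses the heart of the Type II argument, and one of your structural claims contradicts what the paper actually proves.

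For Type I, your intuition (generating $2$-surface in $\R^3$ interacting with the angular $l$-dimensional fiber, glued via the radial principle) matches the paper: Section~\ref{sec:typeI} applies Theorem~\ref{thm:radial_principle} directly, with Condition~\ref{condition:decoupling} verified by Corollary~\ref{cor:decoupling_condition_bivariate}, which is exactly a decoupling result for a sum surface $r(s)+\psi(t)$ with $r$ smooth bivariate and $\psi$ nondegenerate. However, your account of the exponent is misleading as a mechanism: the paper does not compose an $l$-sphere decoupling (good to $p\le 2(l+2)/l$) with a $2$-surface decoupling (good to $p\le 6$). It applies a single Bourgain--Demeter type decoupling (Theorem~\ref{thm:Bourgain_Demeter}) to the full $(l+2)$-dimensional sum surface $r(s)+\psi(t)$ in $\R^{l+3}$, for which $2(l+4)/(l+2)$ is the sharp exponent. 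Composing two lower-dimensional decouplings as you describe would give a strictly worse exponent; your observation that $2(l+4)/(l+2)\le\min\{2(l+2)/l,6\}$ is numerically true but is not what makes the argument work.

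The serious gap is Type II. For Type~II, the Gaussian curvature factors as $K\sim|\partial_r x|^l\,|\det D^2 x|$ (Proposition~\ref{prop_curvature_second_case}), and the angular factor $|\partial_r x|$ can vanish. When it does, the $t$-fiber no longer contributes usable curvature, so ``apply Bourgain--Demeter for the $l$-sphere in the $t$-direction'' is simply not applicable on that region, and the radial principle alone cannot close the argument. The paper devotes Sections~\ref{sec:pseudo-polynomials}--\ref{sec:nondegenerate_case} precisely to this obstruction: it embeds Type~II surfaces into a rescaling-invariant family $\mathcal M$ (Section~\ref{sec:family_surfaces}), takes $H\Phi=\partial_2 x$ as a degeneracy determinant (Section~\ref{sec:degeneracy_determinant}), proves a sublevel-set decoupling for $\{|\partial_2 x|<\sigma\}$ (Theorem~\ref{thm:general_sublevel_set}, which itself needs the machinery of pseudo-polynomials from Section~\ref{sec:pseudo-polynomials}), a totally degenerate decoupling (Section~\ref{sec:totally_degenerate_case}) and a nondegenerate decoupling via a Pramanik--Seeger iteration (Section~\ref{sec:nondegenerate_case}), and then invokes the degeneracy locating principle (Theorem~\ref{thm:degeneracy_locating_principle}) to glue these. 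Your proposal contains none of this, and without it the Type~II case is not proved. Relatedly, your assertion that the $\ell^p$-versus-$\ell^2$ distinction is ``inherited entirely from the generating $2$-surface decoupling'' would predict $\ell^2(L^p)$ for convex Type~II as well; but the paper explicitly remarks (end of Section~\ref{sec_main_decoupling_result}) that this is expected yet their technique --- specifically Theorem~\ref{thm:general_sublevel_set} --- fails to give $\ell^2$ there. So the Type~II loss to $\ell^p$ is a genuine feature of the degeneracy-locating route, not an artifact of the $2$-surface input, and your proposed mechanism is inconsistent with the theorem as stated.
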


It may happen that a given surface has both parts of Type I and parts of Type II. A typical case is when $M=[-1,1]\times S\sub \R^4$, where $S$ is part of the torus in $\R^3$ parametrised by
\begin{equation*}
    S:=\{(\sin \theta,(2+\cos \theta)\cos \varphi, (2+\cos \theta)\sin \varphi),\,\, \theta\in [0,\pi/2],\,\,\varphi\in [\pi/6,\pi/3]\}.
\end{equation*}
It can be checked that the part near the circle $M\cap \{\theta=0\}$ is of Type I, where $l=1$ and $r(s)=r(s_2)=2+\cos (\arcsin s_2)$, and the part near the circle $M\cap \{ \theta=\pi/2\}$ is of Type II, where $l=1$ and $x(s_1,r)=x(r)=\sin (\arccos (r-2))$.

We will actually prove a slight generalisation of Theorem \ref{thm:main_revolution}, that is, Theorem \ref{thm:main_general_revolution}. 

As we will see, the proof for decoupling of Type I surfaces will be much easier than that of Type II surfaces, and we are able to prove a $\ell^2(L^p)$ decoupling inequality in the case where a Type I surface is convex. We also highly anticipate that a $\ell^2(L^p)$ decoupling inequality holds for convex Type II surfaces, but our proof technique, more precisely, Theorem \ref{thm:general_sublevel_set}, fails to achieve this.

Apart from surfaces of revolution, we are also able to prove a decoupling result for tri-variate homogeneous functions, which generalises \cite{LiYang}.
\begin{thm}[Decoupling for tri-variate homogeneous functions]\label{thm:homo}
Let $A_0$ denote the annulus $\{x\in \R^{3}:1\le |x|\le 2\}$, and let $\phi:A_0\to \R$ be smooth. Assume that $\phi$ is (positively) homogeneous of nonzero degree, namely, there exists some $\alpha\in \R\backslash\{0\}$ such that $\phi(rx)=r^\alpha \phi(x)$ for every $r\in [1,2]$ and every $|x|=1$. Then for every $0<\delta\ll_{M}1$, $\eps\in (0,1]$ and $2\le p\le \frac{10}{3}$, $A_0$ can be $\phi$-$\ell^p(L^p)$ decoupled into a family $\mathcal P_{\phi,\eps,\delta}$ of parallelograms of width at least $\delta$ at scale $\delta$ in the sense of Definition \ref{defn:graphical_decoupling} and cost $C_{\eps}\delta^{-\eps}$, where $C_\eps$ and the overlap function of $\mathcal P_{\phi,\eps,\delta}$ depend only on $\phi,\eps$. In particular, we have
\begin{itemize}
    \item If $\phi$ is (positively) homogeneous (of a positive degree) and smooth in the unit cube $[-1,1]^3$, then the decoupling holds for $[-1,1]^3$. In particular, if $\phi$ is a homogeneous polynomial of degree at most $d$, then $C_{\eps}$ can be chosen to depend only on $\eps,d$.
    \item If $D^2\phi$ is positive semidefinite, then this $\ell^p(L^p)$ decoupling can be upgraded to $\ell^2(L^p)$ decoupling.
\end{itemize}
\end{thm}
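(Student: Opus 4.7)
The plan is to exploit the homogeneity $\phi(r\omega)=r^\alpha\psi(\omega)$ (with $\psi:=\phi|_{S^2}$) by passing to spherical coordinates and applying the two principles of decoupling from \cite{LiYang2024}. By rotation invariance of the base and a smooth partition of unity on $S^2$, I would first localise to a small cap near a fixed direction. In local chart coordinates $(u_1,u_2)$ on $S^2$ together with the radial variable $r\in[1,2]$, the $3$-surface to decouple in $\R^4$ is parametrised by $(r,u_1,u_2)$ with height function $r^\alpha\psi(u_1,u_2)$.

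Next I would examine the second-order geometry. Using Euler's relation for homogeneous functions, $\mathrm{Hess}\,\phi$ in Cartesian coordinates has the radial direction as an eigenvector with eigenvalue $\alpha(\alpha-1)r^{\alpha-2}\psi(\omega)$, while the tangential block along $S^2$ equals $r^\alpha$ times the covariant Hessian of $\psi$. On the \emph{generic part} of $A_0$ where $\alpha(\alpha-1)\psi\ne 0$ and $\det\mathrm{Hess}_{S^2}\psi\ne 0$, the full Hessian is non-degenerate, and Bourgain-Demeter decoupling for hypersurfaces with nonzero Gaussian curvature in $\R^4$ immediately yields the target range $p\le 10/3=\frac{2(4+1)}{4-1}$ at cost $C_\eps \delta^{-\eps}$.

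The core difficulty is the degenerate loci: $\{\psi=0\}$, $\{\det\mathrm{Hess}_{S^2}\psi=0\}$, and, when $\alpha=1$, the entire cone case where the radial curvature vanishes identically. My approach here is a sub-level set decomposition in the spirit of Theorem \ref{thm:general_sublevel_set}: chop $A_0$ into dyadic pieces on which $|\psi|\sim 2^{-j}$ or $|\det\mathrm{Hess}_{S^2}\psi|\sim 2^{-k}$, perform an anisotropic rescaling so that each piece attains its natural unit size, and then invoke the two principles of \cite{LiYang2024} --- the cylindrical-type principle, which reduces the surface in one variable to a lower-dimensional surface, and the Pramanik-Seeger iteration, which combines decouplings across successive dyadic scales. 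Within each piece, the angular direction reduces (via local tangent-plane coordinates on $S^2$) to a Bourgain-Demeter decoupling for a $2$-surface, and the radial direction to the plane curve $r\mapsto(r,r^\alpha)$, which has nonzero curvature whenever $\alpha\ne 0,1$. The cone case $\alpha=1$ is then subsumed by the classical cone decoupling.

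The main obstacle will be organising the sub-level set decomposition so that the rescaled pieces remain compatible with the parallelogram structure demanded by Definition \ref{defn:graphical_decoupling}, with the principles of \cite{LiYang2024} carrying most of the bookkeeping. The upgrade from $\ell^p(L^p)$ to $\ell^2(L^p)$ in the convex case should follow from the lower square-function estimate of \cite{BD2015}, which is preserved by the cylindrical decomposition provided convexity of $\phi$ is maintained at each scale.
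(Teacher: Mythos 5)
Your plan diverges substantially from the paper's proof, and one of your central claims is incorrect. You propose spherical coordinates and a factorisation of the Cartesian Hessian into a radial eigenvalue $\alpha(\alpha-1)r^{\alpha-2}\psi$ times a covariant Hessian of $\psi$ on $S^2$. Euler's relation gives $\mathrm{Hess}\,\phi(x)\cdot x = (\alpha-1)\nabla\phi(x)$, and $\nabla\phi$ is generally \emph{not} radial (it has a tangential component $r^{\alpha-1}\nabla_{S^2}\psi$), so the radial direction is not an eigenvector of $\mathrm{Hess}\,\phi$ and $\det\mathrm{Hess}\,\phi$ does not factor as you claim: the mixed radial-tangential entries contribute, and your proposed degenerate loci $\{\psi=0\}$, $\{\det\mathrm{Hess}_{S^2}\psi=0\}$ are therefore the wrong strata. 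This is not a presentational slip, since the whole stratification and the $\alpha=1$ discussion rest on it.

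The paper instead works in the affine chart $x_3\sim 1$, setting $s=(x_1/x_3,x_2/x_3)$, $r=x_3$, so that $\phi = r^\alpha P(s)$ with $P(s)=\phi(s,1)$, and then reduces $P$ to a bivariate \emph{polynomial} by Taylor approximation. This polynomial structure is what powers the sublevel-set decoupling (Theorem~\ref{thm:homo_sublevel_set}, via induction on degree and the fundamental theorem of algebra); spherical coordinates destroy that structure, so the sublevel-set step in your proposal is not just underspecified but genuinely out of reach without a substitute mechanism. The other missing idea is the nonlinear change of variables $u=r^\alpha P(s)$, $t=sP(s)^{-\beta}$ with $\beta=\alpha^{-1}$, which rewrites the surface exactly as a product surface $(u,u^\beta t, u^\beta\psi(t))$; the radial principle (Theorem~\ref{thm:radial_principle}) then reduces this to the additive surface $(u,t,u^\beta+\psi(t))$, which is handled by Corollary~\ref{cor:decoupling_condition_bivariate}. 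Your proposal never identifies this reduction, and without it there is no way to invoke the radial principle at all. (Your remark that $\alpha=1$ is "the classical cone" is also off: these are not light cones but arbitrary conical graphs, and the paper treats them uniformly through the product-surface reduction rather than as a special case.) Finally, for the $\ell^2$ upgrade the paper argues through convexity of $u^\beta+\psi(t)$ for $\beta\in(0,1]$ after the reduction, not directly from a square-function bound for $\phi$; your sketch there would need to be re-thought once the reduction is in place.
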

Examples of tri-variate homogeneous functions of nonzero degree include the function $\phi(x)=|x|^\gamma$ where $\gamma\ne 0$, all nonconstant homogeneous polynomials like $\phi(x,y,z)=x^2y+xz^2$, and homogeneous rational functions like $\phi(x,y,z)=\frac {xyz}{x^4+y^4+z^4}$. However, we remark that our result does not prove decoupling for homogeneous functions of degree $0$, such as $\phi(x,y,z)=\frac{xyz^2}{x^4+y^4+z^4}$.

\subsection{Decoupling for smooth surfaces of vanishing Gaussian curvature}

The main theorems above are partial progress towards the following conjecture.

\begin{conj}\label{conj:dec}
    Let $S$ be a compact piece of a smooth surface in $\R^n$. Then for every $2\le p\le \frac{2(n+1)}{n-1}$ and $0<\delta\ll_S 1$, $N_\delta(M)$ can be $\ell^p(L^p)$ decoupled into parallelograms (with width at least $\delta$). Moreover, if $S$ is convex, then this $\ell^p(L^p)$ decoupling can be upgraded to $\ell^2(L^p)$ decoupling.
\end{conj}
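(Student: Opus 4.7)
The plan is to reduce the conjecture to graphical decoupling of a smooth function $\phi:\Omega\to\R$ on a small neighbourhood $\Omega\sub\R^{n-1}$, after standard localisation and rotation. One would then invoke the two principles of decoupling from \cite{LiYang2024}, combined with a careful stratification of $\Omega$ according to the degeneracy of the Hessian $D^2\phi$. On regions where the Gaussian curvature is bounded away from zero (after an appropriate affine rescaling), the sharp decoupling of Bourgain--Demeter \cite{BD2015,BD2017} applies directly. On regions where the curvature degenerates, the plan is to perform a Pramanik--Seeger-type iteration \cite{PS2007}: at each dyadic scale, partition $\Omega$ into pieces on which $\phi$ is $\delta$-flat (where trivial decoupling is used) and pieces on which a John's-theorem rescaling restores nondegenerate curvature, so that Bourgain--Demeter applies inside each rescaled piece.

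A first step would be to establish the convex case and aim for the stronger $\ell^2(L^p)$ decoupling. Here the second fundamental form is semi-definite, so the vanishing locus of the Gaussian curvature is controlled by the sublevel set structure of $\det D^2\phi$; one would try to adapt and generalise Theorem \ref{thm:general_sublevel_set} of the present paper, replacing the radial/homogeneous symmetry by a dimension induction on the rank of $D^2\phi$. Concretely, on a region where $\mathrm{rank}(D^2\phi)=k$, one may locally write $\phi(x_1,\dots,x_{n-1})=\psi(x_1,\dots,x_{k})+\eta(x)$ where $\psi$ has non-degenerate Hessian and $\eta$ is flat in the remaining $n-1-k$ directions to some (possibly infinite) order. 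One then combines $k$-dimensional sharp decoupling for $\psi$ with scale-by-scale flatness arguments for $\eta$, transported along the base by the two principles of \cite{LiYang2024}.

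The main obstacle, which places the full conjecture beyond current technology, is the geometry of the degeneracy locus for general smooth (non-analytic) $\phi$: the set where $\det D^2\phi=0$ may be topologically wild, and the order of vanishing of the curvature need not be uniform. A successful proof would presumably require some quantitative finiteness or semialgebraic-approximation statement (in the spirit of Yomdin-type bounds) to control the combinatorial complexity of the Pramanik--Seeger iteration, together with a version of Theorem \ref{thm:general_sublevel_set} that is sensitive only to the convex geometry of $\phi$-sublevel sets and not to any symmetry. The remarks after Theorem \ref{thm:main_revolution} already indicate that even upgrading the Type II case from $\ell^p(L^p)$ to $\ell^2(L^p)$ is outside the reach of Theorem \ref{thm:general_sublevel_set}, which is a good indicator of where the bottleneck of this programme lies.
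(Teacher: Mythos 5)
The statement you are asked to prove is Conjecture~\ref{conj:dec}, which the paper explicitly does \emph{not} prove: the authors state that the cases $n=2,3$ are known (by \cite{Yang2}, \cite{LiYang2023}, \cite{GMO24}) and that the case $n\ge 4$ remains open in general; the theorems of this paper are only partial progress for surfaces with radial or homogeneous symmetry. So there is no proof in the paper to compare against, and any ``proof proposal'' for the general statement should be read as a research programme, not a proof. To your credit, you recognise this and correctly identify the structural bottleneck --- the lack of quantitative control on the degeneracy locus $\{\det D^2\phi=0\}$ when $n\ge 4$ --- which is precisely the obstruction the paper points to.

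That said, a couple of concrete issues with the sketch as written. First, the local decomposition $\phi(x_1,\dots,x_{n-1})=\psi(x_1,\dots,x_k)+\eta(x)$ on a stratum where $\mathrm{rank}(D^2\phi)=k$ is not available in general: constant Hessian rank does not imply that $\phi$ splits as a nondegenerate function of $k$ variables plus a function that is flat in the remaining ones. That splitting requires something like a Morse--Bott normal form, which holds only under integrability/constancy conditions on the kernel distribution of $D^2\phi$, and fails already for simple cubic polynomials; the sublevel-set machinery of \cite{LiYang2023} and the present paper works precisely because it never attempts such a normal form. Second, the proposal omits the feature on which the paper's actual progress hinges: radial or homogeneous symmetry reduces the problem by one dimension (via the radial principle, Theorem~\ref{thm:radial_principle}, or via homogeneity-rescaling in Section~\ref{sec_homo_nonzero}), and the sublevel-set decoupling (Theorem~\ref{thm:general_sublevel_set}) is proved only inside that symmetric setting. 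For a generic smooth surface in $\R^n$ with $n\ge 4$ there is no analogous dimension reduction, and no ``symmetry-free'' version of the sublevel-set theorem is known. Finally, approximating $\phi$ by its Taylor polynomial (as the paper does via \cite[Section 2]{LiYang2023}) handles the non-analyticity concern you raise and is standard, so the real bottleneck is not smooth-versus-analytic but rather the combinatorial/geometric complexity of polynomial degeneracy loci in dimension $\ge 3$, without symmetry. In short: your high-level plan is a reasonable description of the known toolkit, but it does not constitute a proof, the $\phi=\psi+\eta$ step is not justified, and the absence of a symmetry-free sublevel-set decoupling is the genuine gap --- consistent with the fact that the paper leaves this a conjecture.
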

The case $n=2$ was solved by 
\cite{Yang2} (see also \cite{BGLSX,Demeter2020}), and the case $n=3$ was solved by \cite{LiYang2023} and \cite{GMO24}. The case $n\ge 4$ remains open in general. To the authors' knowledge, only decoupling in the case of the cone $\{(t,|t|):1\le |t|\le 2\}$ (\cite{BD2015}) and some special surfaces (see, for instance, \cite{GLZZ2022}) have been recorded in the literature. The difficulty of the decoupling problem mainly lies in the complicated structure of the zero sets of the Gaussian curvature, even when the surface is given by a polynomial.

\subsection{Main ideas}
We now describe the main ideas of the proof of Theorems \ref{thm:main_revolution} and \ref{thm:homo}.
\subsubsection{Induction on scales}

Decoupling inequalities of the form \eqref{eq:defn_decoupling} work well with induction on scales arguments. Let $\mathrm{Dec}_M(1,\delta)$ be the cost of decoupling a surface $M$ from scale $1$ to scale $\delta$. A common strategy is to first decouple the surface into $\sigma$-flat rectangles at some intermediate scale $\sigma\in (\delta,1)$. Each $\sigma$-flat rectangle is further decoupled into $\delta$-flat rectangles. Thus, we have the following bound:
\begin{equation}\label{eqn:induction_intro}
    \mathrm{Dec}_M(1,\delta) \leq \mathrm{Dec}_M(1,\sigma) \mathrm{Dec}_M(\sigma,\delta).
\end{equation}

Suppose that $\mathrm{Dec}_M(\sigma,\delta) \lesssim_\varepsilon \left(\frac{\sigma}{\delta}\right)^{-\varepsilon}$. A simple bootstrapping argument shows the desired estimate $\mathrm{Dec}_M(1,\delta) \lesssim \delta^{-O(\varepsilon)}$.

\subsubsection{Radial principle of decoupling}
Fix $\varepsilon>0$, $\delta\in (0,1]$ and $\sigma=\delta^{1-\eps}$. Following the idea of Pramanik-Seeger \cite{PS2007}, the radial principle of decoupling developed in \cite{LiYang2024} exploits the fact that if a part of a surface parametrised by
$$
\Pi = \{(s,r(s)t,r(s)\sqrt{1-|t|^2}: s\in S,t\in T\} \quad r(s)\sim 1
$$
is within the $\sigma$-neighbourhood of a hyperplane, then it 
can be approximated by $\delta$-neighbourhood of 
$$
\Sigma = \{(s,\tau,br(s)+\sqrt{1-|\tau|^2}):s\in S , \tau = r(s)t, t\in T\}, \quad b=b(S,T)\sim 1.
$$
Since $N_\delta(\Sigma)$ is roughly $N_\delta(\Pi)$, $\mathrm{Dec}_\Sigma(\sigma,\delta)\sim\mathrm{Dec}_\Pi(\sigma,\delta)$. By the discussion above, it suffices to show that $\mathrm{Dec}_\Sigma(\sigma,\delta) \lesssim_\varepsilon \left(\frac{\sigma}{\delta}\right)^{-\varepsilon}$. Thus, the radial principle of decoupling principle reduces the study of $\Pi$ to that of $\Sigma$.

\subsubsection{Rescaling invariance}

Decoupling inequalities formulated as in Conjecture \ref{conj:dec} favours rescaling. It is straightforward to see from Definition \ref{defn:decoupling} that rescaling on the frequency side factors out equally on both sides. If $\Omega$ is a $(\phi,\delta)$-flat parallelogram as in Definition \ref{defn:graphical_decoupling}, then $\Xi(\Omega)$ is $(\phi\circ \Xi^{-1}, \delta)$-flat for any affine bijection $\Xi$. For neighbourhoods of parametric surfaces, the same is true if rescaling happens only along tangential directions. 

The key observation is that the $\delta$-neighbourhood of a $\sigma$-flat piece can be rescaled to the $\delta/\sigma$-neighbourhood of some other surface $\tilde{M}$. We say that a family $\mathcal{M}$ is rescaling invariant if $\tilde{M}$ generated from a $\sigma$-flat piece in $M \in \mathcal{M}$ still belongs to the family. Let $\mathrm{Dec}_\mathcal{M}$ denote the supremum of the corresponding decoupling constants $\mathrm{Dec}_M$ over all $M\in \mathcal M$. Then, \eqref{eqn:induction_intro} becomes
$$
\mathrm{Dec}_\mathcal{M}(1,\delta) \leq \mathrm{Dec}_\mathcal{M}(1,\sigma) \mathrm{Dec}_\mathcal{M}(\sigma,\delta) \leq \mathrm{Dec}_\mathcal{M}(1,\sigma)  \mathrm{Dec}_\mathcal{M}(1,\delta/\sigma).
$$
If we naively put the induction hypothesis $\mathrm{Dec}_\mathcal{M}(1,\alpha) \leq C_\varepsilon \alpha^{-\varepsilon}$ for all $\alpha>2\delta$, we arrive at
$$
\mathrm{Dec}_\mathcal{M}(1,\delta) \leq C_\varepsilon^{2} \delta^{-\varepsilon},
$$
which is almost enough. It turns out that we only need to do slightly better in one of the terms. 

\subsubsection{Surfaces of revolution} 
We describe the main idea of proof of Theorem \ref{thm:main_revolution}. Let $K:M \mapsto \R$ measure the Gaussian curvature of $M$. By Bourgain-Demeter's decoupling inequality \cite{BD2015,BD2017}, the decoupling on regions on $M$ where $K$ is away from $0$ is known. For surfaces of revolution $M\sub \R^n$ generated by a 2-surface $M_0\sub \R^3$, the Gaussian curvature factors into $K_rK_a$, where $K_r$ corresponds to the principal curvatures along the radial direction, which is the Gaussian curvature of $M_0$, and $K_a$ measures the principal curvatures generated along the angular direction. The case $K_a \neq 0$ corresponds to parts of Type I \eqref{eqn:TypeI_intro}, which can be handled directly with the radial principle of decoupling. Thus, it suffices to reduce the problem of decoupling parts of Type II \eqref{eqn:TypeII_intro} to that of Type I. 

The reduction uses the degeneracy locating principle introduced in \cite{LiYang2024}. In this particular case, we shall embed our surfaces of revolution into a larger family of surfaces that respects rescaling, while keeping the same curvature factorisation phenomenon. We first show that the sublevel set on which $|K_a|$ is smaller than some $\sigma$ can be decoupled into rectangular boxes that are ready to rescale. The sublevel set problem is typically easier because it is one-dimensional lower than the original problem. After rescaling, $|K_a|$ is small on the entire surface, forcing the rescaled surface to be in the $\sigma$-neighbourhood of some surfaces of Type II but one-dimensional lower. Since both sublevel set decoupling and lower dimensional decoupling are known, we have
$$
\mathrm{Dec}_{M_{\deg}}(1,\sigma) \lesssim_\varepsilon \sigma^{-\varepsilon/2} \implies \mathrm{Dec}_{M_{\deg}}(1,\sigma) \leq \sigma^{-\varepsilon}\quad \forall \varepsilon>0,
$$
where $\mathrm{Dec}_{M_{\deg}}$ roughly denotes the cost of decoupling on the region where $|K_a| \leq \sigma\ll 1$. By doing slightly better, it is enough to close the induction.

\subsubsection{Homogeneous function}
To prove Theorem \ref{thm:homo}, the key observation is that if a homogeneous function $\phi(x) \sim 1$ on the annulus $|x| \sim 1$, it can be reparametrised into some surface of the form $\Pi$. Thus, it suffices to reduce the case of a general homogeneous function $\phi$ to this special case, which then can be handled by a sublevel set decoupling followed by a rescaling argument.

\subsection{Outline of the paper} 
In Section \ref{sec_preliminaries}, we give the preliminaries on surfaces of revolution in higher dimensions. In Section \ref{sec_tools}, we invoke previously established decoupling theorems in \cite{BD2015,BD2017,LiYang2024} as our basic tools. In Section \ref{sec:typeI}, we prove Theorem \ref{thm:main_revolution_original} in the Type I case. In Sections \ref{sec:pseudo-polynomials} to \ref{sec:nondegenerate_case}, we prove Theorem \ref{thm:main_revolution_original} in the Type II case. In Section \ref{sec_homo_nonzero}, we prove Theorem \ref{thm:homo}. Section \ref{sec:appendix} is the appendix, which discusses further generalisations (in addition to \cite{LiYang2024}) of the uniform decoupling theorem for bivariate polynomials proved in \cite{LiYang2023}.

\subsection{Notation}\label{sec:notation}

\begin{enumerate}
    \item \label{item:big_O} We use the standard notation $a=O_M(b)$, or $|a|\lesssim_M b$ to mean that there is a constant $C$ depending on some parameter $M$, such that $|a|\leq Cb$. When the dependence on the parameter $M$ is unimportant, we may simply write $a=O(b)$ or $|a|\lesssim b$. Similarly, we define $\gtrsim$ and $\sim$.

\item Given a subset $S\sub \R^n$, we define the $\delta$-neighbourhood $N_\delta(S)$ to be
    \begin{equation*}
N_\delta(S) := \{ x + c: x \in M, |c|<\delta\}.
\end{equation*}

    \item Given a continuous function $\phi:\Omega\sub \R^{n-1}\to \R$, we define the $\delta$-vertical neighbourhood $N^\phi_\delta(\Omega)$ by    
    \begin{equation*}
N_\delta^\phi(\Omega): =\{ (x , \phi(x) + c): x \in \Omega, c \in [-\delta,\delta]\}.
\end{equation*}

\item Given a collection $\mathcal R$ of subsets of $\R^n$, we abbreviate $\cup \mathcal R$ to be the subset of $\R^n$ defined by $\cup_{R\in \mathcal R}R$.

\item We say that an affine transformation $L:\R^n\to \R^m$ given by $Lx=Ax+b$ is bounded by $C>0$, if every entry of $A,b$ is bounded by $C$. We say that an affine bijection $\lambda$ on $\R^n$ is {\it bounded} by $C$ if $\lambda x=Ax+b$ where all entries of $A,b$ are bounded by $C$.

\item \label{item:Pnd} We denote by $\mathcal P_{n,d}$ the collection of all $n$-variate real polynomials $\phi$ of degree at most $d$, such that $\sup_{[-1,1]^n}|\phi(x)|\le 1$.

\item We say that a $C^2$ function is convex/concave if its Hessian matrix is positive semidefinite/negative semidefinite, respectively. We say that a $C^2$ surface is convex if all of its nonzero principal curvatures are of the same sign. Thus, a surface given by the graph of a $C^2$ function is convex if and only if the function is either convex or concave.

\end{enumerate}

\subsubsection{Notation about parallelograms}\label{sec:equivalence_objects}
Decoupling inequalities are formulated using parallelograms in $\R^n$. To deal with technicalities, it is crucial that we make clear some fundamental geometric concepts.

\begin{enumerate}
    \item A parallelogram $R\sub \R^n$ is defined by a set of the form
    \begin{equation*}
        \{x+b\in \R^n:|x\cdot u_i|\le l_i\},
    \end{equation*}
    where $\{u_i:1\le i\le n\}$ is a basis of $\R^n$ consisting of unit vectors, $b\in \R^n$ and $l_i\ge 0$. If $\{u_i:1\le i\le n\}$ is also orthogonal, we say that $R$ is a rectangle. Unless otherwise specified, we always assume that a parallelogram in $\R^n$ has positive $n$-volume, namely, $l_i>0$ for all $1\le i\le n$. We say that two parallelograms are disjoint if their interiors are disjoint. 
    
    \noindent We define the {\it width} of a parallelogram $P$ to be 
    \begin{equation}\label{eqn:width}
        \inf\{\text{smallest dimension of $T$: $T$ is a rectangle containing $P$} \}.
    \end{equation}
    By the John ellipsoid theorem \cite{John}, every convex body in $\R^n$ is $C_n$-equivalent to a rectangle in $\R^n$, for some dimensional constant $C_n$. For this reason and in view of the enlarged overlap introduced in Definition \ref{defn:enlarged_overlap}, we often do not distinguish rectangles from parallelograms.

    \item Let $R\sub \R^n$ be a parallelogram. If there is no confusion, we often denote by $c(R)$ the centre of $R$. We denote by $\lambda_R$ an affine bijection from $[-1,1]^n$ to $R$. (There is more than one way to make such a choice of $\lambda_R$, but in our paper, any such choice will work identically, since in decoupling inequalities, the orientations of such affine bijections do not make any difference.)

    \item Unless otherwise specified, for a parallelogram $S\sub \R^n$ and $C>0$, we denote by $CS$ the concentric dilation of $S$ by a factor of $C$. Given $t\in \R^n$, we denote $S+t=\{s+t:s\in S\}$. Note that in this notation, we have $C(S+t)=CS+t$.
    
\item For $0<\delta<1$, by a {\it tiling} of a parallelogram $R\sub \R^n$ by cubes of side length $\delta$, we mean a covering $\mathcal T$ of $R$ by translated copies $T$ of a cube of side length $\delta$, such that different $T$ and $T'$ from $\mathcal T$ have disjoint interiors, and that $T\sub 2R$.  

    \item Given a subset $S\sub \R^n$, a parallelogram $R\sub \R^n$ and $C\ge 1$, we say that $S,R$ are $C$-equivalent if $C^{-1}R\sub S\sub CR$. If the constant $C$ is unimportant, we simply say that $S,R$ are equivalent, denoted $S\approx R$.

\end{enumerate}

\subsubsection{Enlarged overlap}
Let $\mathcal R$ be a family of parallelograms. In all decoupling inequalities in the literature, we hope that the parallelograms in $\mathcal R$ do not overlap too much. Since we often deal with constant enlargement of parallelograms, we will need a slightly stronger version of the overlap condition.

\begin{defn}\label{defn:enlarged_overlap}
By an {\it overlap function} we mean an increasing function $B:[1,\infty)\to [1,\infty)$. Given a finite family $\mathcal R$ of parallelograms in $\R^n$ and an overlap function $B$, we say that $\mathcal R$ is $B$-overlapping, or $B$ is an overlap function of $\mathcal R$, if for every constant $\mu\ge 1$ we have
    \begin{equation*}
        \sum_{R\in \mathcal R}1_{\mu R}\le B(\mu).
    \end{equation*}
\end{defn}
In the context of decoupling, the function $B$ may depend on many parameters such as $\eps$ and the family of the manifolds we are decoupling. We say $\mathcal R$ is {\it boundedly overlapping} if its overlap function $B$ is independent of $\delta$. For technical treatments on overlap functions, we refer the reader to \cite[Section 5.6]{LiYang2024}.

\subsection{Acknowledgement}
The first author thanks Betsy Stovall and Xiumin Du for preliminary discussions on an early stage of the work. The second author was supported by the Croucher Fellowships for Postdoctoral Research. The authors thank Terence Tao for helpful discussions.

\section{Preliminaries of surfaces of revolution}\label{sec_preliminaries}

In this section, we provide the preliminaries of surfaces of revolutions in higher dimensions and study the Gaussian curvatures of such surfaces. We will actually work with slightly more general surfaces than exact surfaces of revolution, as in Theorem \ref{thm:main_general_revolution} below. 

\subsection{Rotation in higher dimensions}
In $\R^n$, fix a point $p=(p_1,\dots,p_n)$ and a $k$-dimensional subspace $H$. We define the orbit of $p$ revolving about $H$ to be
\begin{equation*}
    \mathrm{orb}_H(p)=\{q\in \R^n:\dist(p,H)=\dist(q,H),(p-q)\perp H\}.
\end{equation*}
We study some trivial cases first. If $p=0$, then $\orb_H(p)=\{0\}$, so in the following we assume $p\ne 0$. If $H=\R^n$, then $\orb_H(p)=\{p\}$, so in the following we assume $H$ is a proper subspace of $\R^n$. If $H=\{0\}$, then $\orb_H(p)$ is the $(n-1)$-sphere centred at $0$ of radius $|p|$. If $H$ is a $(n-1)$-plane, then $\orb_H(p)=\{p,-p\}$.

Now we come to some nontrivial cases. Let $n\ge 3$, $1\le k\le n-2$, and assume without loss of generality that $H$ is the $x_1\dots x_k$ plane. Then for every nonzero $p\in \R^n$, we have
\begin{equation*}
    \orb_H(p)=\left\{q\in \R^n:q_i=p_i \,\,\forall 1\le i\le k,\quad \sum_{i=k+1}^n q_i^2=\sum_{i=k+1}^n p_i^2\right\}.
\end{equation*}
Thus, $\orb_H(p)$ is a manifold of dimension $n-k-1$. By symmetry, we can partition $\orb_H(p)$ into $O(1)$ smaller pieces, and just consider the subset
\begin{equation*}
    \left\{\left(p_1,\dots,p_k,rt,r\sqrt{1-|t|^2}\right):|t|\le 1/2\right\},
\end{equation*}
where $t=(t_1,\dots,t_{n-k-1})$ and $r=\sqrt{\sum_{i=k+1}^n p_i^2}$.

\subsection{Rotation of a manifold}
Let $n\ge 3$, $1\le k\le n-1$, and assume without loss of generality that $H$ is the $x_1\dots x_k$ plane. Suppose that we are given a manifold in $\R^n$ of dimension $m\in [1,n-1]$, parametrised by
\begin{equation*}
    (x_1(s),\dots,x_n(s)):s\in [-1,1]^m,
\end{equation*}
where each $x_i(s)$ is $C^2$ and satisfies that $\left(\frac{\partial x_i(s)}{\partial s_j}\right)_{1\le i\le n,1\le j\le m}$ has rank $m$. The image of the manifold under the revolution about $H$ is the union of all orbits $\orb_H(p)$, where $p$ ranges through the manifold. More precisely, after cutting into $O(1)$ pieces, it suffices to study the image of revolution given by the piece
\begin{equation*}
    \left\{\left(x_1(s),\dots,x_k(s), r(s)t,r(s)\sqrt{1-|t|^2}\right):s\in [-1,1]^m,|t|\le \frac 1 2\right\},
\end{equation*}
where $t=(t_1,\dots,t_{n-k-1})$ and $r(s)=\sqrt{\sum_{i=k+1}^n x_i(s)^2}$. As such, it suffices to ignore the coordinates $x_i(s)$, $k+1\le i\le n$, and just study the radius function $r(s)$.

We assume that $r(s)$ is away from $0$, so that it is always a $C^2$
function. Assume also that the rank of the matrix $(\frac{\partial (x_i(s),r(s))}{\partial s_j})_{1\le i\le k,1\le j \le m}$ is a constant $m\le k$. (Thus, we are generating a surface of revolution from a manifold in a lower dimensional subspace.)

We will actually study slightly more general manifolds, parametrised by
\begin{equation}\label{eqn_surface}
    \left\{\left(x_1(s),\dots,x_k(s), r(s)t,r(s)\psi(t)\right):s\in [-1,1]^m, t\in [-1,1]^{l}\right\},
\end{equation}
where we have denoted
\begin{equation*}
    l=n-k-1,
\end{equation*}
and $\psi(t)$ is a {\it revolution function} defined right below.
\begin{defn}\label{defn:general_psi}
    Let $l\in \N$ and define a linear operator $L$ on $C^2([-1,1]^l)$ by
\begin{equation}\label{eqn_Lpsi}
    L\psi(t):=\psi(t)-t\cdot \nabla \psi(t).
\end{equation}
We say that a $C^2$ function $\psi:[-1,1]^l\to \R$ is a revolution function, if 
\begin{equation}
\begin{aligned}
    &\psi(0)=1, \quad \nabla \psi(0)=0,\\
    &\inf_{t\in [-1,1]^l}L\psi(t)>0,\quad
    \inf_{t\in [-1,1]^l} |\det D^2 \psi(t)|>0.   
\end{aligned}
\end{equation}
\end{defn}
For instance, the functions
\begin{equation}\label{eqn:examples_of_psi}
    \psi_1(t)=\sqrt{1-\frac{|t|^2}{2l}}, \quad \psi_2(t):=1-|t|^2,\quad \psi_3(t):=1-t_1^2+t_2^2/2-4t_3^2
\end{equation}
are revolution functions. The function $\psi_1$ corresponds to surfaces of revolution after rescaling\footnote{The rescaling is not important, and the sole purpose of it is to make sure that $\psi_1$ is defined on $[-1,1]^l$, which simplifies subsequent notation.} $t\mapsto \sqrt {2l}t$. Note that $\psi_1$ and $\psi_2$ are strictly concave functions. 

The first task is to determine whether the above parametrisation \eqref{eqn_surface} is actually (locally) a manifold in $\R^n$.
\begin{prop}\label{prop_surface_condition}
Assume we are in either of the following cases:
\begin{enumerate}
    \item The rank of the matrix $(\frac{\partial x_i(s)}{\partial s_j})_{1\le i\le k,1\le j \le m}$ is $m\le k$, or
    \item $L\psi(t)\ne 0$ for any $t\in [-1,1]^l$.
\end{enumerate}
Then the image of the parametrisation in \eqref{eqn_surface} is locally a $C^2$ manifold in $\R^n$ of dimension $m+l=m+n-k-1$.
\end{prop}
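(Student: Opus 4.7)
The plan is to verify that the parametrisation
\begin{equation*}
    F(s,t) := \bigl(x_1(s),\dots,x_k(s),\, r(s)t_1,\dots,r(s)t_l,\, r(s)\psi(t)\bigr)
\end{equation*}
is an immersion, i.e.\ its Jacobian $dF$ has maximal rank $m+l$ at every point of $[-1,1]^m \times [-1,1]^l$. Once this is established, the inverse function theorem gives the local manifold structure, and $C^2$ regularity is inherited from $x_i$, $r$ and $\psi$. So the entire content of the proposition reduces to a rank computation.

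First I would write $dF$ in block form as the $(k+l+1) \times (m+l)$ matrix
\begin{equation*}
    dF \;=\; \begin{pmatrix} A & 0 \\ t\, r_s & r\, I_l \\ \psi(t)\, r_s & r\, \nabla\psi(t)^{T} \end{pmatrix},
\end{equation*}
where $A = (\partial x_i/\partial s_j)$ is $k\times m$, $r_s = (\partial r/\partial s_j)$ is $1\times m$, and the middle block $t\,r_s$ has entries $t_i\,\partial r/\partial s_j$. Since $r(s)$ is bounded away from $0$, the central block $r\, I_l$ is invertible. The key step is then the column operation that uses the last $l$ columns to eliminate the middle block in the first $m$ columns. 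A direct computation shows that the last row of column $j \le m$ becomes exactly
\begin{equation*}
    \psi(t)\,\tfrac{\partial r}{\partial s_j} \;-\; \sum_{i=1}^{l} t_i\,\tfrac{\partial r}{\partial s_j}\,\tfrac{\partial \psi}{\partial t_i}
    \;=\; \tfrac{\partial r}{\partial s_j}\bigl(\psi(t) - t\cdot\nabla\psi(t)\bigr)
    \;=\; \tfrac{\partial r}{\partial s_j}\, L\psi(t),
\end{equation*}
which is the natural appearance of the operator $L$ from Definition~\ref{defn:general_psi}. After the reduction, $dF$ is equivalent to
\begin{equation*}
    \begin{pmatrix} A & 0 \\ 0 & r\, I_l \\ L\psi(t)\, r_s & r\, \nabla\psi(t)^{T} \end{pmatrix}.
\end{equation*}

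With the reduction in hand, I would test maximal rank by supposing $\alpha \in \R^m$ lies in the kernel of the first $m$ columns modulo the span of the last $l$ columns. Reading the middle $l$ rows forces the modular correction to vanish, leaving the two conditions $A\alpha = 0$ and $L\psi(t)\,(r_s\cdot\alpha) = 0$. Under hypothesis~(1), $A$ already has rank $m$, so $\alpha = 0$ immediately. Under hypothesis~(2), the assumption $L\psi(t)\ne 0$ upgrades the last equation to $r_s\cdot\alpha = 0$; combined with $A\alpha = 0$ this says $\binom{A}{r_s}\alpha = 0$, and the ambient standing assumption that the matrix $(\partial(x_i,r)/\partial s_j)$ has rank $m$ then forces $\alpha = 0$. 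Either way, $dF$ has rank $m+l$ and the image is locally a $C^2$ manifold of dimension $m+l$.

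The only mildly delicate point I anticipate is bookkeeping: making sure the two hypotheses are dispatched by a single Jacobian calculation, and checking that the algebraic identity $\psi - t\cdot\nabla\psi = L\psi$ is precisely the quantity produced by the column reduction. Everything else is either the inverse function theorem or elementary linear algebra. The cleanness of the proof explains why it is natural for $L\psi$ (rather than $\psi$ itself) to be the quantity required not to vanish.
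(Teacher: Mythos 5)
Your proof is correct and follows essentially the same route as the paper: both compute the Jacobian, use elementary column operations exploiting the $rI_l$ block to expose $L\psi(t)\cdot r_s$ in the remaining row, and then apply the rank hypotheses. Your concluding kernel argument is a little cleaner than the paper's row-reduction (which passes through a ``trivial partition'' to an invertible submatrix), but the key computation---that clearing the middle block produces precisely $\frac{\partial r}{\partial s_j}L\psi(t)$---is identical.
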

\begin{proof}
    By direct computation, the Jacobian of the parametrisation in \eqref{eqn_surface} is given by
    \begin{equation}\label{eqn:Jacobian}
        \begin{bmatrix}
        \frac {\partial x_1}{\partial s_1} & \cdots & \frac {\partial x_k}{\partial s_1} & \frac {\partial r}{\partial s_1}t_1 & \cdots &\frac {\partial r}{\partial s_1}t_{l} & \frac {\partial r}{\partial s_1}\psi\\
        \vdots & \ddots & \vdots & \vdots & \ddots & \vdots & \vdots\\
        \frac {\partial x_1}{\partial s_m} & \cdots & \frac {\partial x_k}{\partial s_m} & \frac {\partial r}{\partial s_m}t_1 & \cdots &\frac {\partial r}{\partial s_m}t_{l} & \frac {\partial r}{\partial s_m}\psi\\[3pt]
        0 & \cdots & 0 & r & \cdots & 0 & r\frac{\partial\psi}{\partial t_1}\\
        \vdots & \ddots & \vdots & \vdots & \ddots & \vdots & \vdots\\
        0 & \cdots & 0 & 0 & \cdots & r & r\frac{\partial\psi}{\partial t_{l}}
    \end{bmatrix}.
    \end{equation}
    Since $r$ is away from $0$, the last $l$ rows are linearly independent. By the rank assumption on the matrix $\left(\frac{\partial (x_i(s),r(s))}{\partial s_j}\right)_{1\le i\le k,1\le j \le m}$ and the assumption that $\psi(0)=1$, the first $m$ rows are linearly independent. 
    \begin{enumerate}
        \item If we are in the first case, then the rows of the whole Jacobian are linearly independent, in view of the zero submatrix in the lower left corner.
        \item If we are in the second case, then by elementary column operations, we may turn the first $m$ entries of the last column of \eqref{eqn:Jacobian} into $(\nabla r) L\psi(t)$. By the rank assumption on \eqref{eqn:Jacobian}, taking $t=0$ and by a trivial partition, we may assume that the matrix
        \begin{equation*}
            \begin{bmatrix}
                \frac {\partial x_1}{\partial s_1} & \cdots & \frac {\partial x_{m-1}}{\partial s_1} & \frac {\partial r}{\partial s_1}\\
                \vdots & \ddots & \vdots & \vdots\\
                \frac {\partial x_1}{\partial s_m} & \cdots & \frac {\partial x_{m-1}}{\partial s_m} & \frac {\partial r}{\partial s_m}
            \end{bmatrix}
        \end{equation*}
        is invertible. By relabelling if necessary, after suitable elementary row operations, we may assume that the above matrix is just $I_m$. By exactly the aforesaid elementary row operations applied to the first $m$ rows of the matrix in \eqref{eqn:Jacobian}, it can be reduced to the form
        \begin{equation*}
            \begin{bmatrix}
                I_{m-1} & ? & O & 0\\
                \vec 0 & ? & t & \psi\\
                O & O & rI_{l} & r\nabla \psi
            \end{bmatrix}.
        \end{equation*}
        Further elementary row operations lead to
        \begin{equation*}
            \begin{bmatrix}
                I_{m-1} & ? & O & 0\\
                \vec 0 & ? & \vec 0 & L\psi\\
                O & O & I_{l} & \nabla \psi
            \end{bmatrix}.
        \end{equation*}
        Since $L\psi\ne 0$, we see that the reduced row echelon form of \eqref{eqn:Jacobian} has no zero row. That is, the rows of \eqref{eqn:Jacobian} are linearly independent.
    \end{enumerate}
\end{proof}

In this paper, we are only interested in the case where the manifold of revolution is a surface, that is, $m=k$. We now state the following generalisation of Theorem \ref{thm:main_revolution}.
\begin{thm}\label{thm:main_general_revolution}
    Let $\psi:[-1,1]^l\to \R$ be a revolution function as in Definition \ref{defn:general_psi}. Consider the surface $M$ parametrised by either one of the following forms:
     \begin{equation}
        \text{(Type I) }\quad\{(s_1,s_2,r(s_1,s_2)t,r(s_1,s_2)\psi(t)):(s_1,s_2)\in [-1,1]^2,t\in [-1,1]^l\},
    \end{equation}
    where $r:[-1,1]^2\to [1,2]$ is a smooth function, or
    \begin{equation}\label{eqn:TypeII_sec2}
        \text{(Type II) }\quad \{(s_1,x(s_1,r),rt,r\psi(t)):s_1\in [-1,1],r\in [1,2],t\in [-1,1]^l\},
    \end{equation}
    where $x:[-1,1]^2\to \R$ is a smooth function. Then for every $0<\delta\ll_M 1$, $\eps\in (0,1]$ and $2\le p\le \frac{2(4+l)}{2+l}$, $N_\delta(M)$ can be $\ell^p(L^p)$ decoupled into a family $\mathcal P_{M,\eps,\delta}$ of parallelograms of width at least $\delta$ in the sense of Definition \ref{defn:decoupling} at cost $C_{\eps} \delta^{-\eps}$ , where $C_\eps$ and the overlap function of $\mathcal P_{M,\eps,\delta}$ depend only on $M,\eps$. 
    
    Moreover, if $M$ is convex and of Type I, then this $\ell^p(L^p)$ decoupling can be upgraded to $\ell^2(L^p)$ decoupling.
\end{thm}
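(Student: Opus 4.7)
The plan is to carry out a Pramanik--Seeger style induction on scales within a suitably chosen rescaling-invariant family $\mathcal M$ of surfaces that contains all Type I and Type II examples and is closed under the natural tangential rescaling. Setting $\sigma=\delta^{1-\eps}$ and writing $\mathrm{Dec}_{\mathcal M}(1,\delta)$ for the supremum of the decoupling constants from scale $1$ down to scale $\delta$, the basic multiplicative inequality
\begin{equation*}
\mathrm{Dec}_{\mathcal M}(1,\delta)\le \mathrm{Dec}_{\mathcal M}(1,\sigma)\,\mathrm{Dec}_{\mathcal M}(1,\delta/\sigma),
\end{equation*}
together with a bootstrap argument, gives the desired bound $C_\eps \delta^{-\eps}$, provided the first factor on the right-hand side can be made acceptably small in $\sigma$ while the second factor is supplied by the induction hypothesis.

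For Type I, the Gaussian curvature of $M$ factors as $K=K_r K_a$, where $K_a$ is controlled by $|\det D^2\psi|>0$ and so never vanishes. Hence the only possible degeneracy is in the two-dimensional radial direction (the graph of $r$), which is handled by the known decoupling for smooth surfaces in $\R^3$ from \cite{LiYang2023,GMO24}, invoked as a black box in Section \ref{sec_tools}. I first decouple $N_\delta(M)$ into $\sigma$-flat caps coming from the radial direction. On each cap, the radial principle of decoupling from \cite{LiYang2024} approximates the piece by the $\delta$-neighbourhood of a surface obtained by a tangential change of variables, which, after anisotropic rescaling in the $t$-variables, lies in $\mathcal M$ with parameter $\delta/\sigma$; the induction hypothesis closes. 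When the Type I surface is convex, $\psi$ is convex or concave and $r$ defines a convex $2$-surface, so Bourgain--Demeter \cite{BD2017} applies on the nondegenerate cap and convexity is preserved by the tangential rescaling, yielding the stronger $\ell^2(L^p)$ bound.

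The heart of the argument is Type II, where $K_a$ genuinely vanishes. Here I apply the degeneracy-locating principle from \cite{LiYang2024}: I partition $M$ according to the magnitude of $|K_a|$, and on the nondegenerate part $\{|K_a|\gtrsim \sigma\}$ the Type I argument still works. On the sublevel set $\{|K_a|\le \sigma\}$ I invoke the sublevel set decoupling theorem (Theorem \ref{thm:general_sublevel_set}, a generalisation of \cite{LiYang2023} developed in Section \ref{sec:pseudo-polynomials}) to cover by parallelograms that rescale to $N_{\delta/\sigma}$ of a Type II surface \eqref{eqn:TypeII_sec2} with the $t$-dimension $l$ effectively reduced, since vanishing of $K_a$ forces one angular direction to be nearly flat. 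This dimensional reduction combined with lower-dimensional decoupling is what allows a power saving strictly better than $\sigma^{-\eps}$ on the degenerate part, exactly enough to close the bootstrap at the sharp exponent $p=\frac{2(4+l)}{2+l}$. The main technical obstacle I expect is in the Type II set-up: one must choose the ambient family $\mathcal M$ so that both the sublevel set decomposition and the subsequent rescaling land back in $\mathcal M$ (possibly with a derived lower-dimensional revolution function), and verify that the zero set of $K_a$ has the pseudo-polynomial structure required for Theorem \ref{thm:general_sublevel_set} to apply. This is also where the $\ell^2(L^p)$ improvement appears to break down: the sublevel set decoupling step does not respect convexity, which is why convex Type II surfaces are not treated.
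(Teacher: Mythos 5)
Your proposal follows essentially the same route as the paper: Type I handled by the radial principle after verifying the sum-surface decoupling condition, Type II handled by the degeneracy-locating principle with the angular curvature factor (in the paper's parametrisation, $H\Phi=\partial_2 x$, so $K_a$ up to a power) as the degeneracy determinant, and you correctly identify both the role of pseudo-polynomials in the sublevel-set decoupling and the reason the $\ell^2(L^p)$ upgrade is not obtained for Type II (Theorem \ref{thm:general_sublevel_set} does not respect convexity). Two places where your sketch diverges from what the paper actually does, though neither is a strategic error. First, the nondegenerate Type II case ($|\partial_2 x|\sim 1$ after dyadic decomposition) is not treated by literally rerunning the Type I radial principle; the paper instead performs a direct Pramanik--Seeger iteration (Theorem \ref{thm:nondegenerate_decoupling_PS}) which, after rescaling and affine reductions, approximates the last coordinate by a sum $\tilde x(s)+\tilde Q(t)$ and invokes Proposition \ref{prop:refine_IJ_smooth}; the two mechanisms land on the same sum-surface decoupling but by different routes, and the paper's route is chosen because it stays inside the parametric family $\mathcal M$ built in Section \ref{sec:family_surfaces}. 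Second, the totally degenerate case is not reformulated as a ``Type II surface with $l$ reduced'' nor does $\mathcal M$ contain a derived lower-dimensional revolution function: once $\partial_2 x\equiv 0$ the paper simply projects to the planar curve $(s_1,y(s_1))$ and applies univariate polynomial-curve decoupling \cite{Yang2}; the family $\mathcal M$ in \eqref{eqn:defn_family_M} is kept at fixed ambient dimension throughout, and the dimensional drop is realised only through this projection.
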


\subsection{Gaussian curvature of surfaces of revolution}
We now compute the Gaussian curvature of the surface of revolution parametrised by
\begin{equation*}
    X=(x_1(s),\dots,x_k(s), r(s)t_1,\dots, r(s)t_l,r(s)\psi(t)),
\end{equation*}
where $s=(s_1,\dots,s_k)$ and the Jacobian matrix $(\frac {\partial (x_i(s),r(s))}{\partial s_j})_{1\le i\le k,1\le j\le k}$ has rank $k$, and $\psi$ is a revolution function as defined in Definition \ref{defn:general_psi}.

Similarly to the analysis of Proposition \ref{prop_surface_condition}, we need to consider two cases.

\subsubsection{Type I}
This case is when the Jacobian matrix $(\frac {\partial x_i(s)}{\partial s_j})_{1\le i\le k,1\le j\le k}$ is invertible. By the inverse function theorem, we may assume $x_i(s)=s_i$ for $1\le i\le k$, so $X=(s,r(s)t,r(s)\psi(t))$.

\begin{prop}\label{prop:curvature_first_case}
    The Gaussian curvature of the surface parametrised by $X=(s,r(s)t,r(s)\psi(t))$  has absolute value comparable to
\begin{equation*}
    |\det D^2 r(s)|.
\end{equation*}
In addition, if $\psi$ is concave, then the surface $X$ is convex if and only if $r$ is concave.
\end{prop}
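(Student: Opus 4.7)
The plan is a direct computation of the second fundamental form of the hypersurface $X = (s, r(s)t, r(s)\psi(t))$ in $\R^{k+l+1}$. First, I will write down the tangent frame
\[
X_{s_i} = (e_i,\, r_{s_i}t,\, r_{s_i}\psi), \qquad X_{t_j} = (0,\, r\,e_j,\, r\psi_{t_j}),
\]
and look for an unnormalized normal $N$ of the form $(A,B,1)$. The condition $N\cdot X_{t_j}=0$ forces $B=-\nabla\psi(t)$, and then $N\cdot X_{s_i}=0$ forces $A=-L\psi(t)\,\nabla r(s)$, where $L\psi=\psi-t\cdot\nabla\psi$ is the operator from Definition \ref{defn:general_psi}. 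Thus
\[
N=\bigl(-L\psi(t)\,\nabla r(s),\,-\nabla\psi(t),\,1\bigr).
\]
Because $L\psi,\, r$ are bounded above and below and $\nabla\psi$ is bounded by Definition \ref{defn:general_psi}, both $|N|$ and $\det I$ are comparable to $1$ uniformly in $(s,t)$.

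Next I will compute the second derivatives of $X$ and contract them with $N$. The key calculation — and essentially the only piece worth doing carefully — is that the mixed block vanishes identically:
\[
X_{s_it_j}\cdot N = r_{s_i}(-\psi_{t_j})+r_{s_i}\psi_{t_j}=0.
\]
Combined with $X_{s_is_j}\cdot N=L\psi\cdot r_{s_is_j}$ and $X_{t_it_j}\cdot N=r\,\psi_{t_it_j}$, this shows that the unnormalized second fundamental form is block diagonal:
\[
\mathrm{II}_0=\begin{pmatrix} L\psi(t)\,D^2 r(s) & 0\\ 0 & r(s)\,D^2\psi(t)\end{pmatrix},\qquad \det\mathrm{II}_0=(L\psi)^{k}r^{l}\,\det D^2 r(s)\,\det D^2\psi(t).
\]

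To conclude the curvature bound, I will apply $|K|=|\det\mathrm{II}_0|/(|N|^{k+l}\det I)$ and invoke Definition \ref{defn:general_psi} one more time: $L\psi\sim 1$, $r\sim 1$, and $|\det D^2\psi|\sim 1$ uniformly, so $|K|\sim |\det D^2 r(s)|$. For the convexity claim, convexity of the hypersurface is equivalent to $\mathrm{II}_0$ being semidefinite. Thanks to the block-diagonal form and to $L\psi>0,\,r>0$, this reduces to $D^2 r$ and $D^2\psi$ being semidefinite with the same sign. When $\psi$ is concave, so $D^2\psi\le 0$, the surface is convex iff $D^2 r\le 0$, that is, iff $r$ is concave.

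The one potential obstacle is guessing the normal $N$ in just the clean form above so that $X_{s_it_j}\cdot N$ vanishes identically — it is precisely this cancellation that decouples the radial and angular contributions and makes the curvature factor as $\det D^2 r\cdot \det D^2\psi$. Once that is in place, the rest is block-determinant bookkeeping together with the qualitative size bounds built into Definition \ref{defn:general_psi}.
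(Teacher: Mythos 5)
Your proof is correct and is essentially the paper's proof: you compute the same normal (up to sign), exhibit the same block-diagonal second fundamental form, and conclude the curvature comparison and the convexity equivalence in the same way. The only difference is that you spell out why the mixed block $X_{s_it_j}\cdot N$ vanishes and carry the $|N|^{k+l}\det I$ normalization explicitly, which the paper glosses over (harmlessly, since $|N|\sim 1$ and $\det I\sim 1$).
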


\begin{proof}
    By direct computation, a normal vector to the surface can be given by
    \begin{equation*}
        N:=(\nabla r(s)L\psi(t),\nabla \psi(t),-1).
    \end{equation*}    
    Thus, the coefficient matrix of the second fundamental forms is given by
    \begin{equation}\label{eqn:Apr_21_01}
        |N|^{-k-l}\begin{bmatrix}
        L\psi(t) D^2 r(s) & O\\
        O & r(s) D^2 \psi(t)
    \end{bmatrix},
    \end{equation}
    where $D^2$ denotes the Hessian matrix. Since $r(s)\sim 1$ and $|N|\sim 1$, the Gaussian curvature of the surface has absolute value comparable to 
    \begin{equation*}
        |L\psi(t)\det D^2 r(s)\det D^2 \psi(t)|.
    \end{equation*} 
    Since we assumed $L\psi(t)>0$ and $\det D^2 \psi(t)\ne 0$, we see that the first sentence of this proposition follows. The statement about convexity follows from \eqref{eqn:Apr_21_01}.
\end{proof}
We also remark that in this case, considering the $\delta$-neighbourhood of the surface is equivalent to considering the $\delta$-neighbourhood of the last coordinate $r(s)\psi(t)$ only.

\subsubsection{Type II}
By a trivial partition and relabelling, we assume that the Jacobian matrix 
\begin{equation*}
    \left(\frac {\partial (x_i(s),r(s))}{\partial s_j}\right)_{1\le i\le k-1,1\le j\le k}
\end{equation*}
is invertible. By the implicit function theorem and changing notation, we may redefine the parametrisation to be
\begin{equation}
    X=(s,x(s,r),rt,r\psi(t)),
\end{equation}
where $s\in [-1,1]^{k-1}$, $r\in [1,2]$, $t\in [-1,1]^l$.

\begin{prop}\label{prop_curvature_second_case}
    The Gaussian curvature of the surface parametrised by $X=(s,x(s,r),rt,r\psi(t))$ has its absolute value comparable to
\begin{equation}
    |\partial_r x(s,r)|^l |\det D^2 x(s,r)|.
\end{equation}
In addition, if $\psi$ is concave, then the surface $X$ is convex if and only if either
\begin{itemize}
    \item $x$ is convex and $\partial_r x(s,r)$ is nonnegative, or
    \item $x$ is concave and $\partial_r x(s,r)$ is nonpositive.
\end{itemize}
\end{prop}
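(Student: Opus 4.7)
The strategy mirrors that of Proposition \ref{prop:curvature_first_case}: I would first compute a normal vector $N$ to the parametrised surface, then assemble the matrix $(\partial_{ij}X\cdot N)$ of second-derivative dot products (indexed by the $k+l$ parameters $(s_1,\ldots,s_{k-1},r,t_1,\ldots,t_l)$), and finally extract the Gaussian curvature from its determinant. Since the parametrisation is smooth and bounded with $r\sim 1$, the first fundamental form has determinant comparable to $1$, and the computation below will show $|N|\sim 1$, so both the magnitude and the sign pattern of the curvature are captured by this matrix up to constants.

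Solving $\partial_{s_i}X\cdot N=\partial_r X\cdot N=\partial_{t_j}X\cdot N=0$ with the tangent vectors read off from \eqref{eqn:TypeII_sec2} (using $\psi - t\cdot\nabla\psi = L\psi$ to verify the $r$-equation), I would obtain
\[
N = \bigl(-L\psi(t)\,\nabla_s x,\; L\psi(t),\; \partial_r x\,\nabla\psi(t),\; -\partial_r x\bigr),
\]
with blocks of sizes $k-1,1,l,1$; the hypothesis $L\psi\gtrsim 1$ forces $|N|\sim 1$. I would then compute the second derivatives of $X$: the only nonzero second-order entries live either in the $k$-th coordinate $x(s,r)$ (contributing entries of $D^2 x$), in the coordinates $rt_j$ (contributing the constant $1$ in the mixed $(r,t_j)$ derivatives), or in the last coordinate $r\psi(t)$ (contributing $\partial_{t_j}\psi$ in $(r,t_j)$ derivatives and $r\partial^2_{t_it_j}\psi$ in $(t_i,t_j)$ derivatives). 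The key cancellation is
\[
\partial_{rt_j}X\cdot N = (\partial_{t_j}\psi)\,\partial_r x + (\partial_{t_j}\psi)(-\partial_r x) = 0,
\]
which, together with the trivial vanishing $\partial_{s_it_j}X\cdot N=0$, makes the matrix block diagonal:
\[
(\partial_{ij}X\cdot N)=\begin{bmatrix} L\psi(t)\,D^2 x(s,r) & O \\ O & -r\,\partial_r x(s,r)\,D^2\psi(t) \end{bmatrix}.
\]

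Taking the determinant and using $L\psi\sim 1$, $r\sim 1$, $|\det D^2\psi|\sim 1$, and $|N|\sim 1$, the Gaussian curvature has absolute value comparable to $|\partial_r x(s,r)|^l\,|\det D^2 x(s,r)|$, proving the first claim. For the convexity assertion, the block-diagonal structure reduces the question to sign-semidefiniteness of each block separately with compatible signs. Since $L\psi>0$, the first block has the same definiteness type as $D^2 x$ (positive semidefinite iff $x$ is convex, negative semidefinite iff $x$ is concave). Since $\psi$ is concave so $D^2\psi\le 0$, and $r>0$, the second block is positive semidefinite iff $\partial_r x\ge 0$ and negative semidefinite iff $\partial_r x\le 0$. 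Matching signs between the two blocks yields exactly the two cases in the statement. The only mildly subtle point in the calculation is the cancellation in the mixed $(r,t_j)$ derivative — this is what produces the block-diagonal form and hence the clean factorisation of the curvature; everything else parallels the Type I case.
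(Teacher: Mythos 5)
Your proof is correct and follows essentially the same route as the paper: compute the normal vector $N=(-L\psi(t)\nabla_s x,\,L\psi(t),\,\partial_r x\nabla\psi(t),\,-\partial_r x)$, assemble the matrix of second fundamental form coefficients, observe it is block diagonal, and read off both the curvature magnitude and the convexity criterion from the block structure. The only difference is that you explicitly verify the cancellation $\partial_{rt_j}X\cdot N=0$ that produces the block-diagonal form, whereas the paper states the resulting matrix directly; this is a welcome clarification rather than a deviation.
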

\begin{proof}
    By direct computation, a normal vector to the surface can be given by
\begin{equation*}
    N:=(-L\psi(t)\nabla_s x,L\psi(t), \partial_r x\nabla \psi(t),-\partial_r x).
\end{equation*}
Thus, the coefficient matrix of the second fundamental forms is given by
\begin{equation}\label{eqn:Apr_21_02}
    |N|^{-k-l}\begin{bmatrix}
        L\psi(t) D^2 x(s,r)  & O\\
        O & -r \partial_r x(s,r) D^2 \psi(t)
    \end{bmatrix}.
\end{equation}
Since $r\sim 1$, $L\psi(t)> 0$ and $\det D^2 \psi(t)\ne 0$, the first sentence of this proposition follows. The statement about convexity follows from \eqref{eqn:Apr_21_02}.

\end{proof}

We also remark that in this case, considering the $\delta$-neighbourhood of the surface is equivalent to considering the $\delta$-neighbourhood of the coordinate $x(s,r)$ only. This is because the Jacobian matrix $\frac{\partial(s,rt,r\psi(t))}{\partial (s,t)}$ is invertible.

\section{Main tools}\label{sec_tools}
In this section, we invoke the fundamental theorems established in \cite{BD2015,BD2017,Yang2,LiYang2023}, together with the two principles of decoupling established in \cite{LiYang2024}. They will be the main tools we will use to prove the new decoupling theorems.

\subsection{Bourgain-Demeter decoupling}
First, we state the celebrated decoupling theorem of Bourgain and Demeter \cite{BD2015,BD2017},  which serves as the most fundamental ingredient in this article.  To achieve our goals, we actually need the following variant, whose proof can be easily adapted from that of \cite[Proposition 5.21]{LiYang2024}.
\begin{thm}\label{thm:Bourgain_Demeter}
Let $K\ge 1$, $\zeta\in (0,1]$ and $\phi(x):[-1,1]^{k}\to \R$ be a $C^{2,\zeta}$ function with $\inf |\det D^2 \phi|\ge K^{-1}$. Let $\psi(y):[-1,1]^l\to \R$ be a $C^{2}$ function with $\inf |\det D^2 \psi|>0$.  Let $n=k+l+1$. For every $0<\delta<1$, denote by $\mathcal T_\delta$ a tiling of $[-1,1]^{n-1}$ by cubes $T$ of side length $\delta^{1/2}$. Denote $\eta(x,y)=\phi(x)+\psi(y)$.

Then for $2\le p\le \frac{2(n+1)}{n-1}$, $[-1,1]^{n-1}$ can be $\eta$-$\ell^p(L^p)$ decoupled into $\mathcal T_\delta$ at the cost of $C_\eps K^{C'_k\eps}\delta^{-\eps}$ for every $\eps>0$, in the sense of Definition \ref{defn:decoupling}, where the constant $C'_k$ depends only on $k$, and $C_\eps$ depends only on 
\begin{equation*}
k,l,\zeta,\norm{\phi}_{C^{2,\zeta}},\norm{\psi}_{C^2},\inf |\det D^2 \psi|.
\end{equation*}
Moreover, this can be upgraded to a $\ell^2(L^p)$ decoupling if $\eta(x,y)$ defines a convex surface.
\end{thm}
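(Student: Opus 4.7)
The proof is an adaptation of \cite[Proposition 5.21]{LiYang2024}. The starting observation is that, since $\eta(x,y)=\phi(x)+\psi(y)$, the Hessian $D^2\eta$ is block diagonal with diagonal blocks $D^2\phi(x)$ and $D^2\psi(y)$, so
\begin{equation*}
|\det D^2\eta(x,y)|=|\det D^2\phi(x)|\cdot|\det D^2\psi(y)|\gtrsim K^{-1}.
\end{equation*}
Hence the graph of $\eta$ is a $C^{2,\zeta}$ hypersurface in $\R^n$ with nondegenerate Gaussian curvature, and the Bourgain--Demeter decoupling theorem \cite{BD2015,BD2017} applies in principle. Likewise, the graph of $\eta$ is convex precisely when $\phi$ and $\psi$ are simultaneously of the same (semi)definite type, a condition that is preserved under affine rescaling of either variable.

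The entire substance of the statement lies in extracting the explicit dependence $K^{C'_k\eps}$. I would follow the scheme of \cite[Proposition 5.21]{LiYang2024}, which addresses exactly this issue in the single-variable setting (the case $l=0$). Since $\|D^2\phi\|_{\mathrm{op}}=O(1)$ and $|\det D^2\phi|\ge K^{-1}$, the eigenvalues of $D^2\phi(x_0)$ at any reference point $x_0$ lie in $[cK^{-1},C]$; there is then an affine map $x=x_0+Ax'$ with $\|A\|_{\mathrm{op}}=O(1)$ and $\|A^{-1}\|_{\mathrm{op}}=O(K^{1/2})$ such that $\tilde\phi(x'):=\phi(x_0+Ax')$ has Hessian comparable to $\pm I_k$ on a neighborhood of $x'=0$ of size controlled by the $C^{2,\zeta}$-modulus of $D^2\phi$. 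Because $D^2\eta$ is block diagonal, this rescaling in $x$ leaves $\psi$ untouched, and on each normalized piece $\tilde\eta(x',y)=\tilde\phi(x')+\psi(y)$ satisfies the hypotheses of standard Bourgain--Demeter with surface parameters independent of $K$.

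On each such normalized piece, Bourgain--Demeter (in $\ell^p(L^p)$, or the $\ell^2(L^p)$ version in the convex case) gives decoupling into $\delta^{1/2}$-cubes at cost $C_\eps\delta^{-\eps}$ with constants uniform in $K$; pulling back through $A$ recovers the desired $\delta^{1/2}$-cubes in the original $x$-variable up to bounded distortion. The main obstacle, and the mechanism responsible for the factor $K^{C'_k\eps}$, is combining the $O_k(K^{O_k(1)})$ normalization pieces without losing a polynomial factor in $K$: a naive triangle inequality would be fatal. Following \cite[Proposition 5.21]{LiYang2024}, this is achieved by induction on scales, applying the normalization iteratively at a carefully chosen intermediate scale $\sigma$ so that each stage incurs only a subpolynomial loss in $K$, while invoking the induction hypothesis for the residual smaller scale $\delta/\sigma$. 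Because the rescaling is confined to the $k$-dimensional $x$-direction, the resulting exponent $C'_k$ depends only on $k$. The convex case proceeds identically, since block-diagonal definiteness of $D^2\eta$ is preserved under the $x$-rescaling and the $\ell^2(L^p)$ upgrade in the convex part of Bourgain--Demeter's theorem passes through each step of the induction.
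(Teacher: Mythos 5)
Your proposal follows the same route the paper takes: the paper's proof of Theorem~\ref{thm:Bourgain_Demeter} consists precisely of the observation that the argument of \cite[Proposition 5.21]{LiYang2024} adapts to the block-sum setting $\eta=\phi+\psi$, and you correctly identify the block-diagonality of $D^2\eta$ and the confinement of the $K$-normalization to the $x$-variables as the crux of that adaptation. One small but real slip: the operator-norm bounds on the normalizing map are swapped. To bring $D^2\phi$, whose eigenvalues lie in $[cK^{-1},C]$ in absolute value, to $\pm I_k$ via $D^2\tilde\phi = A^T (D^2\phi) A$, one must \emph{stretch} the $x'$-coordinates in the nearly flat directions, so $\|A\|_{\mathrm{op}}=O(K^{1/2})$ and $\|A^{-1}\|_{\mathrm{op}}=O(1)$, not the reverse. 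This matters downstream: it is precisely the stretching of $A$ that explains why the normalized Hessian stays $\approx \pm I_k$ only on an $x'$-neighbourhood whose size depends on $K$ through the $C^{2,\zeta}$ modulus of $D^2\phi$, and hence why on the order of $K^{O_k(1)}$ normalization pieces are needed, which is in turn exactly what the induction on scales must absorb. With that correction, your account of the mechanism (iterate at an intermediate scale $\sigma$, invoke the hypothesis at scale $\delta/\sigma$, avoid a one-shot $K^{O_k(1)}$ triangle-inequality loss) matches the cited proposition, and the $\ell^2(L^p)$ upgrade in the convex case passes through as you say, since block-diagonal semidefiniteness is preserved under the $x$-only conjugation.
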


We now invoke the two principles of decoupling established in \cite{LiYang2024}. For completeness, we will formulate the necessary terminology and state the theorems here, and we encourage the reader to read \cite[Section 3]{LiYang2024} for more explanations and rigorous proofs of the theorems. We also rearrange the orders for better presentation.

\subsection{Rescaling invariance} The decoupling inequalities we are interested enjoy the rescaling-invariant property. In this subsection, we introduce the necessary setup and notation that facilitate this idea.

\subsubsection{Setup and notation}\label{sec:setup_notation}

Let $\mathcal U_0$ be the collection of compact sets $U\subseteq \R^{n-1}$ whose interiors are nonempty and connected. Given $0<c_0<C_0<C_1$, let $\mathcal{M}_0$ be the collection of all $C^2$ surfaces (with boundary) parametrised by $\Phi : U=U_\Phi \to \R^n$ for some compact set $U\in \mathcal U_0$, such that they satisfy the following assumptions (see notation right below for the notation $\bar \Phi$):
    \begin{enumerate} 
        \item $\|\Phi\|_{C^2(U)}\le C_0$;
        \item $\inf_U|\det D\bar\Phi|\ge c_0$;
        \item $\bar{\Phi}(U)$ contains $[-1,1]^{n-1}$ and is $C_1$-equivalent to $[-1,1]^{n-1}$.
    \end{enumerate}  
\fbox{Notation} Here and in what follows, we write $\Phi=(\bar \Phi,\Phi_n)$, where $\bar \Phi$ denotes the first $(n-1)$ coordinates of $\Phi$, and $\Phi_n$ denotes the last coordinate of $\Phi$. For $\sigma\in \R$, we denote by $\sigma \Phi$ the surface parametrised by $(\bar \Phi,\sigma \Phi_n)$. When there is no confusion, we will abuse notation and regard $\Phi$ as the surface (as a subset of $\R^n$) it parametrises. 

Unless otherwise specified, we assume that every implicit constant depends on $n,c_0,C_0,C_1$. We also write $A\approx [-1,1]^{n-1}$ to mean that $A$ is equivalent to $[-1,1]^{n-1}$.

\begin{defn}[Affine equivalence of surfaces]\label{defn:affine_equivalence_parametric}
    Let $\Phi,\Psi \in \mathcal M_0$ and let $R_\Phi, R_\Psi $ be almost parallelograms essentially contained in $[-1,1]^{n-1}$. Let $\Xi: \bar\Psi^{-1}(R_\Psi) \to \bar\Phi^{-1}(R_\Phi)$ be a diffeomorphism. 
    
    We say $(\Psi, R_\Psi)$ is $\Xi$-affine equivalent to $(\Phi,R_\Phi)$, or in symbols
    \begin{equation}
         (\Psi,R_\Psi)\overset{\Xi}\longmapsto (\Phi, R_\Phi),
    \end{equation}
    if there exists an affine bijection $\Lambda:\R^n\to \R^n$ such that (see Figure \ref{fig:cd})
    \begin{equation}\label{eqn:affine_equivalence}
        \Lambda\circ\Psi=\Phi\circ \Xi
    \end{equation}
    and that
    \begin{equation}\label{eqn:affine_equivalence_add}
        \Lambda(x,y) - \Lambda (x,0) = (0,y) \quad \forall x\in \R^{n-1}, \, y \in \R.
    \end{equation}

    Typically, $\Xi$ is clear when the pairs $(\Phi,R_\Phi)$ and $(\Psi, R_\Psi)$ are given. We sometimes simply say that $(\Phi,R_\Phi)$ and $(\Psi, R_\Psi)$ are affine equivalent.
\end{defn}
\begin{figure}[h]
    \centering
        \[
        \begin{tikzcd}[column sep = small, row sep = large]
        \bar \Psi^{-1}(R_\Psi)\arrow[ddr, out=-150, in=-180, "\Psi_n", near end, swap] \arrow{rr}{\Xi} \arrow[rightarrow]{d}{\bar\Psi} & & \bar \Phi^{-1}(R_\Phi) \arrow[rightarrow]{d}{\bar\Phi} \arrow[ddl, out=-30, in=0, "\Phi_n", near end] \\
        R_\Psi \arrow{rr}{\bar\Lambda} \arrow{dr}{\psi} & & R_\Phi \arrow{dl}[swap]{\phi}\\
        & \R  &
        \end{tikzcd}
        \]
        \caption{Diagram of affine equivalence. Here, $\bar \Lambda(x)$ is defined by the first $n-1$ coordinates of $\Lambda (x,0)$. Also, $\phi=\Phi_n\circ \bar \Phi^{-1}$ and $\psi=\Psi_n\circ \bar \Psi^{-1}$.}
        \label{fig:cd}
\end{figure}
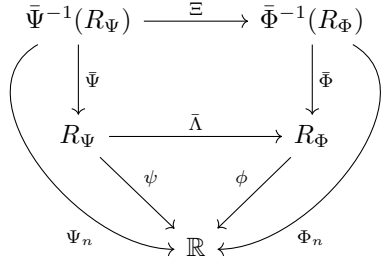

The additional Assumption \eqref{eqn:affine_equivalence_add} is to ensure that the transformed surface can still be written as a graph of the first $(n-1)$ coordinates. See \cite[Proposition 3.4]{LiYang2024}. 
Intuitively, we can think of $\Xi$ as a diffeomorphism on the ``parameter space", while $\bar \Lambda$ is its associated affine bijection on the ``coordinate space".

{\it Remark.} Strictly speaking, in \cite[Section 3]{LiYang2024} we assumed $R_\Phi$ to be exact parallelograms, not almost parallelograms. However, in view of the enlarged overlap assumption in Definition \ref{defn:enlarged_overlap}, the reader may check that the proofs go through in the same way. Therefore, we often do not distinguish between the terms ``almost parallelogram" and ``parallelogram".

\begin{defn}\label{defn:decoupling_parametric}
Let $\Phi \in \mathcal{M}_0$ and $\delta >0$.
\begin{enumerate}
    \item For $R\sub \bar \Phi(U_\Phi)$, we define the (parametric) vertical neighbourhood $N_\delta^\Phi(R)$ by
\begin{equation}\label{eqn:vertical_neighbourhood_parametric}
    N_\delta^\Phi(R) = \{ \Phi(u) + c : u \in \bar\Phi^{-1}(R) , c\in \{\vec 0\}\times [-\delta,\delta]\}.
\end{equation}
    \item For $R\sub \bar \Phi(U_\Phi)$, we say that $R \subseteq \bar \Phi(U_\Phi)$ is $(\Phi,\delta)$-flat if $N_\delta^\Phi(R)$ is contained in the $\delta$-neighbourhood of some plane given by $x_n = \alpha_0+\sum_{i=1}^{n-1}\alpha_i x_i$, for some $|\alpha_i| \lesssim 1$.
    \item We say that an almost parallelogram $R_0\sub \bar \Phi(U_\Phi)$ can be $\Phi$-$\ell^q(L^p)$ decoupled into almost parallelograms $R$, if $N_\delta^\Phi(R_0)$ can be $\ell^q(L^p)$ decoupled into $N_\delta^\Phi(R)$ (which are also almost parallelograms) as in Definition \ref{defn:decoupling}.
\end{enumerate}
\end{defn}

With the notation introduced above, we now state the following lemma on the affine invariance of decoupling.

\fbox{Notation} For the remainder of this section, we omit the decoupling exponents $p,q$ since they will be fixed.

Let $\M\sub\M_0$ be a subfamily. For each $\Phi \in \mathcal M$, let $\mathcal R_\Phi$ be a family of almost parallelograms contained in $\bar \Phi(U_\Phi)$. Let $\mathfrak R=\{\mathcal R_\Phi:\Phi\in \mathcal M\}$ be the collection of all such families $\mathcal R_\Phi$. Let $\mathcal{X}$ be a family of diffeomorphisms $\Xi$ between subsets of $\R^{n-1}$.

\begin{defn}[Rescaling invariant pair of families]\label{defn:rescaling_invariant_pair}

We say that the pair of families $(\mathcal M,\mathfrak R)$ is $\mathcal{X}$-rescaling invariant if the following holds.
\begin{enumerate}
    \item \label{item:Mar_25_rescaling_invariant_01} For every $0<\sigma\lesssim 1$, $\Phi \in \M$ and every $(\Phi,\sigma)$-flat parallelogram $R_\Phi\in \mathcal R_\Phi$, there exist $\Psi=\Psi(\Phi,R_\Phi) \in \M$ and $R_{\Psi}=R_\Psi(\Phi,R_\Phi) \in \mathcal R_{\Psi}$ equivalent to $[-1,1]^{n-1}$, and $\Xi=\Xi(\Phi,R_\Phi,\Psi,R_\Psi) \in \mathcal{X}$, such that
\begin{equation}\label{eqn:X_rescaling_invariance}
    (\sigma\Psi,R_\Psi)\overset{\Xi}{\longmapsto} (\Phi,R_\Phi)
\end{equation}
as in Definition \ref{defn:affine_equivalence_parametric}.

\item \label{item:Mar_25_rescaling_invariant_02} Given any $\Phi,\Psi\in \mathcal M$, any $R_\Phi\in \mathcal R_\Phi$, $R_\Psi\in \mathcal R_\Psi$ and any $0<\sigma\lesssim 1$ such that \eqref{eqn:X_rescaling_invariance} holds. Then for every $R_\Psi' \in \mathcal{R}_\Psi$ such that $R_\Psi' \subseteq R_\Psi$, we have $\bar\Lambda(R'_\Psi) \in \mathcal{R}_\Phi$, where $\bar \Lambda:=\bar \Phi\circ \Xi\circ \bar \Psi^{-1}$ as in Figure \ref{fig:cd}.

\end{enumerate}
\end{defn}

{\it Remark.} Note that in particular, by Part \eqref{item:Mar_25_rescaling_invariant_01} with $\sigma\sim 1$, for each $R_\Phi\in \mathcal R_\Phi$, there exist $\Phi_{R_\Phi}\in \mathcal M$, $R^0_\Phi\approx [-1,1]^{n-1}$, and $\Xi\in \mathcal X$ such that
\begin{equation*}
    (\Phi_{R_\Phi},R^0_\Phi)\overset{\Xi}{\longmapsto} (\Phi,R_\Phi).
\end{equation*}

We still need to connect the concepts of rescaling invariance and compatibility of \cite{LiYang2024} to our current setting. Let $(\mathcal M,\mathfrak R)$ be $\mathcal{X}$-rescaling invariant in the sense of Definition \ref{defn:rescaling_invariant_pair}. Define a family $\mathcal A_{\mathcal X}$ of affine bijections on $\R^{n-1}$ by
    \begin{equation}\label{eqn:defn_mathcal_A_principle}
    \begin{aligned}
        \mathcal{A}_{\mathcal X}
        = \bigcup_{0<\sigma\lesssim 1} \bigcup_{\stackrel{\Phi \in \mathcal{M}}{R_\Phi \text{ is $(\Phi,\sigma)$-flat} } } \left\{\bar \Phi\circ \Xi\circ \bar \Psi^{-1}\big| \right.&\left.\Psi=\Psi(\Phi,R_\Phi), R_{\Psi}=R_\Psi(\Phi,R_\Phi),\right.\\
        &\left.\Xi=\Xi(\Phi,R_\Phi,\Psi,R_\Psi)\right\},
    \end{aligned}        
    \end{equation}
    that is, $\mathcal A_{\mathcal X}$ consists of all affine transformations $\bar \Lambda$ associated with all mappings $\Xi$ (as in Figure \ref{fig:cd}) in Part \eqref{item:Mar_25_rescaling_invariant_01} of Definition \ref{defn:rescaling_invariant_pair}.
\begin{lem}\label{lem:mathcal_A_rescaling_compatible}
     The pair $(\mathcal M,\mathfrak R)$ is $\mathcal{A}_{\mathcal X}$-rescaling invariant and $\mathcal{A}_{\mathcal X}$-compatible as in \cite[Definition 3.5 and Definition 3.7]{LiYang2024}, respectively.
    
\end{lem}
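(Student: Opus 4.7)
The plan is to verify the two properties by unpacking the two parts of Definition \ref{defn:rescaling_invariant_pair} and matching them to the corresponding conditions from \cite[Definitions 3.5 and 3.7]{LiYang2024}. Since the family $\mathcal A_{\mathcal X}$ was defined in \eqref{eqn:defn_mathcal_A_principle} to be exactly the collection of associated affine bijections $\bar\Lambda=\bar\Phi\circ\Xi\circ\bar\Psi^{-1}$ produced by Part \eqref{item:Mar_25_rescaling_invariant_01} of Definition \ref{defn:rescaling_invariant_pair}, both conclusions should follow essentially tautologically, once one checks that these $\bar\Lambda$'s carry the quantitative structure the older definitions demand.

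For $\mathcal A_{\mathcal X}$-rescaling invariance, the strategy is direct: fix $0<\sigma\lesssim 1$, $\Phi\in\mathcal M$ and a $(\Phi,\sigma)$-flat parallelogram $R_\Phi\in\mathcal R_\Phi$. Applying Part \eqref{item:Mar_25_rescaling_invariant_01} of Definition \ref{defn:rescaling_invariant_pair} produces $\Psi\in\mathcal M$, a parallelogram $R_\Psi\in\mathcal R_\Psi$ equivalent to $[-1,1]^{n-1}$, and $\Xi\in\mathcal X$ with $(\sigma\Psi,R_\Psi)\overset{\Xi}{\longmapsto}(\Phi,R_\Phi)$. Reading off the commutative diagram in Figure \ref{fig:cd}, the affine bijection $\bar\Lambda:=\bar\Phi\circ\Xi\circ\bar\Psi^{-1}$ lies in $\mathcal A_{\mathcal X}$ by the very definition \eqref{eqn:defn_mathcal_A_principle}, and it is the affine map that realises the rescaling on the coordinate side. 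This matches the form required by the rescaling invariance condition of \cite[Definition 3.5]{LiYang2024}.

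For $\mathcal A_{\mathcal X}$-compatibility, the plan is to invoke Part \eqref{item:Mar_25_rescaling_invariant_02} of Definition \ref{defn:rescaling_invariant_pair} directly. The compatibility condition requires that, whenever $\bar\Lambda\in\mathcal A_{\mathcal X}$ arises from some quadruple $(\Phi,R_\Phi,\Psi,R_\Psi)$ with $(\sigma\Psi,R_\Psi)\overset{\Xi}{\longmapsto}(\Phi,R_\Phi)$, the map $\bar\Lambda$ sends elements of $\mathcal R_\Psi$ lying inside $R_\Psi$ to elements of $\mathcal R_\Phi$. This is precisely the hypothesis $R'_\Psi\subseteq R_\Psi \Rightarrow \bar\Lambda(R'_\Psi)\in\mathcal R_\Phi$ that Part \eqref{item:Mar_25_rescaling_invariant_02} supplies, so the required implication transfers verbatim.

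The main obstacle, and essentially the only point requiring genuine verification, is the quantitative control on the maps in $\mathcal A_{\mathcal X}$: namely that each $\bar\Lambda$ has affine coefficients bounded in terms of $n,c_0,C_0,C_1$ and that it respects the enlarged overlap functions of the families $\mathcal R_\Phi,\mathcal R_\Psi$ within constants depending only on these parameters. This control is not stated explicitly in Definition \ref{defn:rescaling_invariant_pair}, but follows from the uniform bounds built into $\mathcal M_0$, namely $\|\Phi\|_{C^2}\le C_0$, $\inf|\det D\bar\Phi|\ge c_0$, and the $C_1$-equivalence $\bar\Phi(U_\Phi)\approx[-1,1]^{n-1}$, together with the constraint \eqref{eqn:affine_equivalence_add} which pins down the last-coordinate direction of $\Lambda$. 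With these bounds in hand the verification of the two cited definitions reduces to an unpacking of symbols, and no further induction-on-scales or geometric argument is needed.
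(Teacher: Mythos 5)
Your proof is correct and takes essentially the same approach the paper intends: the paper declares the lemma ``almost immediate from the definition of $\mathcal A_{\mathcal X}$'' and omits the verification, and your unpacking of the two parts of Definition \ref{defn:rescaling_invariant_pair} against the corresponding conditions of \cite{LiYang2024}, together with the observation that the required quantitative bounds on $\bar\Lambda$ come from the uniform $C^2$ and Jacobian bounds built into $\mathcal M_0$, is exactly that verification spelled out.
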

The proof of Lemma \ref{lem:mathcal_A_rescaling_compatible} is almost immediate from the definition of $\mathcal A_{\mathcal X}$, so we leave it out.

\begin{lem}\label{lem:equivalent_decoupling_A}
Let $(\mathcal M,\mathfrak R)$ be $\mathcal X$-rescaling invariant as in Definition \ref{defn:rescaling_invariant_pair}. Let $\Phi,\Psi \in \mathcal{M}$, $\sigma\in (0,1]$ satisfy \eqref{eqn:X_rescaling_invariance}. 
For $\delta\in (0,\sigma]$, the following statements are equivalent:
   \begin{enumerate}
        \item \label{item:Mar_5_01} $R_\Phi$ can be $\Phi$-decoupled into parallelograms belonging to $\mathcal R_\Phi$ at scale $\delta$ at cost $C$.
        \item \label{item:Mar_5_02} $R_{\sigma\Psi}$ can be $\sigma\Psi$-decoupled into parallelograms belonging to $\mathcal R_{\psi}$ at scale $\delta$ at cost $C$.
        \item \label{item:Mar_5_03} $R_\Psi$ can be $\Psi$-decoupled into parallelograms belonging to $\mathcal R_{\psi}$ at scale $\sigma^{-1}\delta$ at cost $C$.
     \end{enumerate}
    
    \end{lem}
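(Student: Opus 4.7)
The plan is to prove the three equivalences by exhibiting explicit affine bijections between the three frequency neighbourhoods and invoking the general principle that the decoupling constant (Definition \ref{defn:decoupling}) is preserved under invertible affine maps on the frequency side. The two bijections are: a vertical rescaling $\tau_\sigma : (x,y) \mapsto (x,\sigma y)$, which relates $\Psi$ to $\sigma\Psi$, and the affine bijection $\Lambda : \R^n\to \R^n$ coming from $(\sigma\Psi,R_\Psi)\overset{\Xi}{\longmapsto}(\Phi,R_\Phi)$, which relates $\sigma\Psi$ to $\Phi$.

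First I would handle \eqref{item:Mar_5_02} $\Leftrightarrow$ \eqref{item:Mar_5_03}. A direct unfolding of \eqref{eqn:vertical_neighbourhood_parametric} gives, for any $R\subseteq \bar\Psi(U_\Psi)$, the identity
\begin{equation*}
    N_\delta^{\sigma\Psi}(R)=\tau_\sigma\bigl(N_{\delta/\sigma}^{\Psi}(R)\bigr),
\end{equation*}
because scaling the last coordinate by $\sigma$ converts a $(\delta/\sigma)$-thick vertical slab over $\Psi$ into a $\delta$-thick vertical slab over $\sigma\Psi$, while leaving the base $R$ unchanged. Since $\tau_\sigma$ is an invertible linear map on $\R^n$ with $\tau_\sigma = \tau_\sigma^{-T}$-type symmetry in the frequency/spatial duality, the collection $\{N_\delta^{\sigma\Psi}(R)\}$ is the $\tau_\sigma$-image of $\{N_{\delta/\sigma}^{\Psi}(R)\}$, and the decoupling constant in Definition \ref{defn:decoupling} is invariant under such bijections (one simply substitutes $f_R$ by $f_R\circ \tau_\sigma^{-1}$ on both sides, which scales both $L^p$-norms by the same Jacobian factor). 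This already gives \eqref{item:Mar_5_02} $\Leftrightarrow$ \eqref{item:Mar_5_03}, with the same decompositions of $R_\Psi = R_{\sigma\Psi}$ into pieces of $\mathcal R_\Psi$ on both sides.

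Next I would handle \eqref{item:Mar_5_01} $\Leftrightarrow$ \eqref{item:Mar_5_02} via the affine equivalence. From $\Lambda\circ (\sigma\Psi)=\Phi\circ\Xi$ together with \eqref{eqn:affine_equivalence_add} (which says $\Lambda$ fixes the vertical direction rigidly), one checks
\begin{equation*}
    \Lambda\bigl(N_\delta^{\sigma\Psi}(R)\bigr) = N_\delta^{\Phi}\bigl(\bar\Lambda(R)\bigr)
\end{equation*}
for every $R\subseteq \bar\Psi(U_\Psi)$, where $\bar\Lambda=\bar\Phi\circ\Xi\circ\bar\Psi^{-1}$ as in Figure \ref{fig:cd}. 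In particular $\Lambda(N_\delta^{\sigma\Psi}(R_\Psi))$ is equivalent to $N_\delta^{\Phi}(R_\Phi)$. Now invoke Part \eqref{item:Mar_25_rescaling_invariant_02} of Definition \ref{defn:rescaling_invariant_pair}: any decomposition of $R_\Psi$ into almost parallelograms $R_\Psi'\in \mathcal R_\Psi$ corresponds, via $\bar\Lambda$, to a decomposition of $R_\Phi$ into almost parallelograms $\bar\Lambda(R_\Psi')\in \mathcal R_\Phi$, and vice versa by inverting $\bar\Lambda$ (and using that $\Phi,\Psi$ play symmetric roles once the equivalence is established). Applying again the affine invariance of the decoupling constant, now under $\Lambda$, converts a decoupling of $N_\delta^{\Phi}(R_\Phi)$ into one for $N_\delta^{\sigma\Psi}(R_\Psi)$ at the same cost $C$, giving \eqref{item:Mar_5_01} $\Leftrightarrow$ \eqref{item:Mar_5_02}.

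The main obstacle I would expect is technical rather than conceptual: carefully verifying the affine invariance of decoupling in the exact form stated (with almost parallelograms and the enlarged overlap of Definition \ref{defn:enlarged_overlap}), and checking that the correspondence of subdivisions preserves the overlap functions with at most a bounded loss. One must also confirm that the identification $R_\Psi = R_{\sigma\Psi}$ (as subsets of $\R^{n-1}$) is legitimate and that $\bar\Lambda$ sends parallelograms to almost parallelograms uniformly in the parameters, both of which follow from the $C^2$-bounds and non-degeneracy built into $\mathcal M_0$ and the compatibility statement of Lemma \ref{lem:mathcal_A_rescaling_compatible}.
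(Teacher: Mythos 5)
Your two geometric identities are both correct and form the right backbone of the argument. Since $\overline{\sigma\Psi}=\bar\Psi$, one indeed has $R_{\sigma\Psi}=R_\Psi$, $\mathcal R_{\sigma\Psi}=\mathcal R_\Psi$, and the identity $N_\delta^{\sigma\Psi}(R)=\tau_\sigma\bigl(N_{\delta/\sigma}^{\Psi}(R)\bigr)$ is an immediate unfolding of \eqref{eqn:vertical_neighbourhood_parametric}. Likewise $\Lambda\bigl(N_\delta^{\sigma\Psi}(R)\bigr)=N_\delta^{\Phi}\bigl(\bar\Lambda(R)\bigr)$ follows from $\Lambda\circ(\sigma\Psi)=\Phi\circ\Xi$ together with the vertical-rigidity condition \eqref{eqn:affine_equivalence_add}, and the affine invariance of the constant in Definition \ref{defn:decoupling} is the standard frequency-side change of variables (the Jacobian factors out of both sides of \eqref{eq:defn_decoupling}). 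One small slip: $\tau_\sigma^{-T}=\tau_\sigma^{-1}\neq\tau_\sigma$ for $\sigma\neq 1$, so the ``$\tau_\sigma=\tau_\sigma^{-T}$-type symmetry'' remark is off, though it does not affect the conclusion.

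There is, however, a genuine gap in the direction $\eqref{item:Mar_5_01}\Rightarrow\eqref{item:Mar_5_02}$, which you acknowledge but do not close. Pulling a decomposition $\{R_\Phi'\}\subseteq\mathcal R_\Phi$ of $R_\Phi$ back through $\bar\Lambda^{-1}$ certainly produces a decomposition of $R_\Psi$ that decouples $N_\delta^{\sigma\Psi}(R_\Psi)$ at the same cost, but Part~\eqref{item:Mar_25_rescaling_invariant_02} of Definition \ref{defn:rescaling_invariant_pair} only asserts the forward inclusion $\bar\Lambda(R_\Psi')\in\mathcal R_\Phi$; nothing in the definition guarantees $\bar\Lambda^{-1}(R_\Phi')\in\mathcal R_\Psi$. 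Your phrase ``and vice versa by inverting $\bar\Lambda$, using that $\Phi,\Psi$ play symmetric roles once the equivalence is established'' is not a proof: the roles are \emph{not} symmetric in the definition, and there is no a priori $\bar\Lambda$-bijection between $\mathcal R_\Psi$-pieces inside $R_\Psi$ and $\mathcal R_\Phi$-pieces inside $R_\Phi$. For the direction the paper actually uses (transferring $\eqref{item:Mar_5_03}$ to $\eqref{item:Mar_5_01}$ inside the induction on scales) this gap is harmless, since $\eqref{item:Mar_5_03}\Rightarrow\eqref{item:Mar_5_02}\Rightarrow\eqref{item:Mar_5_01}$ only uses the forward maps. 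But to justify the full equivalence as stated you would need either to strengthen Definition \ref{defn:rescaling_invariant_pair}\eqref{item:Mar_25_rescaling_invariant_02} to a bijective correspondence of families, or to verify the converse inclusion directly in the concrete family at hand (which is indeed immediate in the setup of Section \ref{sec:family_surfaces}, where $\mathcal R_\Phi=\bar\Phi(\mathcal S\times\mathcal T)$ and $\Xi$ acts coordinate-wise on $\mathcal S\times\mathcal T$).
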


\subsection{Degeneracy locating principle}

We state and slightly rephrase the {\it degeneracy locating principle} in \cite[Section 3]{LiYang2024}.

\begin{defn}[Degeneracy determinant]\label{defn:degeneracy_determinant}
    Let $(\mathcal M,\mathfrak R)$ be an $\mathcal{X}$-rescaling invariant pair as in Definition \ref{defn:rescaling_invariant_pair}. We say that an operator $H$ on $\mathcal M$ that sends $\Phi\in \M$ to a function $H\Phi$ defined on $U_\Phi\approx [-1,1]^{n-1}$ is a degeneracy determinant, if there exist constants $C\ge 1$ and $\beta\in (0,1]$, depending only on $(\mathcal M,\mathfrak R)$, such that the following holds:

    \begin{enumerate}
        \item \label{item:Lipschitz} (Lipschitz) For every $\Phi\in \mathcal M$, 
        \begin{equation}\label{eqn:Lipschitz}
            |H\Phi(u)-H\Phi(u')|\le C|u-u'|,\quad \forall u,u'\in U_\Phi.
        \end{equation}
        
        \item \label{item:totally_degen} (Totally degenerate approximation)
        Let $\Phi \in \M$  be such that $$\norm{H \Phi}_{L^\infty(U_\Phi)}\le \sigma$$ for some $0<\sigma\lesssim 1$. Then there exists a surface $\Psi \in \mathcal{M}$ satisfying $H\Psi \equiv 0$ on $U_\Phi$ and
        \begin{equation}
            N^{\Phi}_{\sigma^{\beta}} (U_\Phi) \subseteq N^{C^{-1}\Psi}_{C\sigma^{\beta}} (U_\Phi) \subseteq N^{\Phi}_{C^2\sigma^{\beta}} (U_\Phi).
        \end{equation}
        Moreover, $\mathcal{R}_\Psi \subseteq \mathcal{R}_\Phi$.
        \item (Regularity under rescaling) Let $R \in \mathcal{R}_\Phi$, take $R_\Phi$, $\Phi_{R_\Phi}$ and $\Xi$ as in the remark right after Definition \ref{defn:rescaling_invariant_pair}. If the width of $R_\Phi$ is essentially bounded below by $\mu$, then
        \begin{equation}\label{eqn:regularity_degen_det}
            \mu^C |H\Phi (\Xi (u))| \leq |H\Phi_{R_\Phi}(u)| \leq |H\Phi(\Xi (u))|,\quad \forall \,\Xi(u)\in \bar \Phi^{-1}(R_\Phi).
        \end{equation}
    \end{enumerate}   
        
\end{defn}

\begin{defn}[Trivial covering property]\label{defn:trivial_covering}
    We say that a $\mathcal X$-rescaling invariant pair $(\mathcal M,\mathfrak R)$ satisfies the trivial covering property if there exist constants $C_1,C_2,C\ge 1$, depending only on $\mathcal M,\mathfrak R,\mathcal X$, such that the following holds for every $\Phi\in \mathcal M$: there exists a family $\mathcal R_{\mathrm{tri},\Phi}\sub \mathcal R_\Phi$ such that for every $K\ge 1$, we can cover $\bar \Phi(U_\Phi)$ by $\le CK^{C_1}$ parallelograms $R_0\in \mathcal R_{\mathrm{tri},\Phi}$, such that each $R_0$ has diameter $\le K^{-1}$ and width $\ge C^{-1}K^{-C_2}$. 
    
    Moreover, we have 
    \begin{equation*}
        (\Psi,R_\Psi)\overset{\Xi}{\longmapsto} (\Phi,R_0)
    \end{equation*}
    for some $\Psi\in \mathcal M$, $R_\Psi\approx [-1,1]^{n-1}$ and some $\Xi\in \mathcal X$ (so that its associated $\bar \Lambda\in \mathcal A_{\mathcal X}$ as in \eqref{eqn:defn_mathcal_A_principle}).
\end{defn}

\subsubsection{Statement of theorem}

We are now ready to state the main theorem of this subsection, the degeneracy locating principle adapted to our setting.

Let $(\mathcal M,\mathfrak R)$ be a $\mathcal X$-rescaling invariant pair as in Definition \ref{defn:rescaling_invariant_pair}, and assume that it satisfies the trivial covering property as in Definition \ref{defn:trivial_covering}. Let $H$ be a degeneracy determinant defined as in Definition \ref{defn:degeneracy_determinant}.

\begin{thm}[Degeneracy locating principle in parametric form]\label{thm:degeneracy_locating_principle}
    For $\Phi\in \mathcal M$, there exist subfamilies $R_{\mathrm{sub,\Phi}}$, $R_{\mathrm{deg,\Phi}}$, $R_{\mathrm{non,\Phi}}$ of $\mathcal R_\Phi$, such that the following holds for every $0<\sigma\le C_\eps^{-1}$, $\eps>0$, $\Phi\in \mathcal M$, where all constants $C$ (resp. $C_\eps$, overlap functions $B$)  below depend only on $\mathcal M,\mathfrak R,\mathcal X,H$ (resp. $\mathcal M,\mathfrak R,\mathcal X,H,\eps$):
    \begin{enumerate}[label=(\alph*)]
        \item \label{item:sublevel_set_decoupling} (Sublevel set decoupling) The subset
        \begin{equation*}
            \{\bar{\Phi}(u): u \in U_\Phi, \,|H\Phi(u)|\le \sigma\}
        \end{equation*}
        can be decoupled (as in Definition \ref{defn:decoupling}) into $B$-overlapping parallelograms $R_{\mathrm{sub},\Phi}\in \mathcal R_\Phi$ with width at least $\sigma$ at cost $C_\varepsilon\sigma^{-\varepsilon}$, on each of which we have $|H\Phi(\bar\Phi^{-1}(x))|\le C\sigma$. Moreover, we require that \begin{equation*}
        (\Psi,R_\Psi)\overset{\Xi}{\longmapsto} (\Phi,R_{\mathrm{sub}})
    \end{equation*}
    for some $\Psi\in \mathcal M$, $R_\Psi\approx [-1,1]^{n-1}$ and some $\Xi\in \mathcal X$ (so that its associated $\bar \Lambda\in \mathcal A_{\mathcal X}$ as in \eqref{eqn:defn_mathcal_A_principle}).
    
    \item \label{item:degnerate_decoupling} (Degenerate decoupling) If $H\Phi\equiv 0$ on $U_\Phi$, then $\bar\Phi(U_\Phi)$ can be $\Phi$-decoupled (as in Definition \ref{defn:decoupling_parametric}) at scale $\sigma$ into $(\Phi,\sigma)$-flat $B$-overlapping parallelograms $R_{\mathrm{deg},\Phi} \in \mathcal R_\Phi $ with width at least $\sigma$ at cost $C_\eps\sigma^{-\varepsilon}$.

    \item \label{item:nondegnerate_decoupling} (Nondegnerate decoupling) If $|H\Phi| \geq K^{-1}$ on $U_\Phi$ for some $K\ge 1$, then $\bar\Phi(U_\Phi)$ can be $\Phi$-decoupled (as in Definition \ref{defn:decoupling_parametric}) at scale $\sigma$ into $(\Phi,\sigma)$-flat $B$-overlapping parallelograms $R_{\mathrm{non},\Phi}\in \mathcal R_\Phi$ with width at least $\sigma$ 
 at cost $K^{C}C_\eps\sigma^{-\varepsilon}$. (Importantly, the power $C$ on $K$ cannot depend on $\eps$.)

\item [(*)] \label{item:lowdim} Let $\Phi\in \mathcal M$. Given a parallelogram $R$ that is either an element of $\mathcal R_{\mathrm{tri},\Phi}$ from the trivial decoupling in Definition \ref{defn:trivial_covering}, or an element of $\mathcal R_{\mathrm{sub},\Phi}$ in Assumption \ref{item:sublevel_set_decoupling}. Denote by $ w$ the width of $R$, and let $L\sub \R^{n-2}$ be the lower dimensional parallelogram obtained from $R$ by removing the shortest dimension of $R$. Then the following conditions hold:
            \begin{enumerate}[label=(\roman*)]
                \item $L$ can be $\phi|_L$-decoupled into $(\phi|_L, w)$-flat parallelograms $R'_{\mathrm{low}}$ at scale $ w$ at cost $C_\eps  w^{-\eps}$, with $R'_{\mathrm{low}}$ having width at least $ w$.
                \item Denote by $R_{\mathrm{low}}$ the parallelogram obtained from $R'_{\mathrm{low}}$ by adding back the dimension of length $  w$. Then we require that $\lambda_{R_{\mathrm{low}}}\in \mathcal A$, $R_{\mathrm{low}}\in \mathcal R_\Phi$, and that $R_{\mathrm{low}}\sub 2R$.
           
            \end{enumerate}  
    \end{enumerate}

    Then for each $\Phi\in \mathcal M$ and each $0<\delta\le C_\eps^{-1}$, $\bar\Phi(U_\Phi)$ can be $\Phi$-decoupled into $(\Phi,\delta)$-flat $B'$-overlapping parallelograms $R_{\mathrm{final}}\in \mathcal R_\Phi$ with width at least $\delta$ at cost $C'_\eps \delta^{-\eps}$, where $C'_\eps$ and $B'$ depend only on $\mathcal M,\mathfrak R,\mathcal X,H,\eps,p,q$.

\end{thm}

In order to use Theorem \ref{thm:degeneracy_locating_principle} to prove decoupling for Type II surfaces of revolution as in Theorem \ref{thm:main_general_revolution}, we now present an outline of what we need to prove in Sections \ref{sec:family_surfaces} through \ref{sec:nondegenerate_case}:
\begin{itemize} 
    \item Section \ref{sec:family_surfaces}: definition of families $\mathcal M$, $\mathfrak R$ and $\mathcal X$.
    \item Section \ref{sec:rescaling}: rescaling invariance as in Definition \ref{defn:rescaling_invariant_pair}, and the trivial covering property as in Definition \ref{defn:trivial_covering}
    \item Section \ref{sec:degeneracy_determinant}: conditions of degeneracy determinant as in Definition \ref{defn:degeneracy_determinant}
    \item Section \ref{sec:sublevel_set}: sublevel set decoupling as in \ref{item:sublevel_set_decoupling} of Theorem \ref{thm:degeneracy_locating_principle}
    \item Section \ref{sec:totally_degenerate_case}: totally degenerate decoupling and lower dimensional decoupling as in \ref{item:degnerate_decoupling} and (*) of Theorem \ref{thm:degeneracy_locating_principle}
    \item Section \ref{sec:nondegenerate_case}: nondegenerate decoupling as in \ref{item:nondegnerate_decoupling} of Theorem \ref{thm:degeneracy_locating_principle}
\end{itemize}

\subsection{Radial principle}

The second principle we state in this section is the radial principle, formulated in \cite[Section 2]{LiYang2024}. We state a slightly different but simplified version here.

\subsubsection{Setup}

For each $j\in \N$, let $\mathcal{R}_j$ be the family of all parallelograms contained in $[-3,3]^{j}$, and let $\mathcal{X}_j$ be the family of all affine bijections on $\R^j$. For $k,l\in \N$, let $\mathcal{M}$ be a family of graphs $M$ of the form
\begin{equation*}
    M=\{(s,t,\phi_1(s)+\phi_2(t)):(s,t)\in [-3,3]^{k+l}\}
\end{equation*}
where $\phi_1,\phi_2$ are $C^2$. Abusing notation, we also write $\phi_1(s)+\phi_2(t)\in \mathcal M$.

Let $\mathcal{R}$ be the family of parallelograms of the form $R_1\times R_2$ where $R_1\in \mathcal R_k$ and $R_2\in \mathcal R_l$. Let $\mathcal X$ be the family of all affine bijections $\Xi$ of the form $\Xi(s,t)=\Xi_1(s) \circ \Xi_2(t)$, where $\Xi_1\in \mathcal X_k$ and $\Xi_2\in \mathcal X_l$. 

Suppose that $(\mathcal{M},\{\mathcal{R}\})$ is an $\mathcal{X}$-rescaling invariant family as in Definition \ref{defn:rescaling_invariant_pair} (with $\Phi_n=\phi_1(s)+\phi_2(t)$), i.e., for every $0<\sigma \lesssim 1$, $ \phi_1(s)+\phi_2(t) \in \mathcal{M}$ and every $(\phi_1(s)+\phi_2(t),\sigma)$-flat parallelogram $R_1\times R_2 \in \mathcal R$, we have
    \begin{equation*}
        \sigma^{-1}(\phi_1\circ \lambda_{R_1}(s) + \phi_2 \circ \lambda_{R_2}(t) -\text{affine terms} )\in \mathcal{M},
    \end{equation*}
    where the affine terms are given by
    $$
    \phi_1\circ \lambda_{R_1}(0) + \phi_2 \circ \lambda_{R_2}(0)+\nabla_s (\phi_1\circ \lambda_{R_1})(0)\cdot s+\nabla_t (\phi_2\circ \lambda_{R_2})(0)\cdot t.
    $$

We now state the required decoupling condition (cf. \cite[Condition 2.3]{LiYang2024}).
\begin{condition}[Decoupling for sum surface]\label{condition:decoupling}  For every $\eps>0$ and every $0<\delta\ll_\mathcal M 1$, the following is satisfied for all $\phi_1(s)+\phi_2(t)  \in \mathcal{M}$: \\
$[-1,1]^{k+l}$ can be $(\phi_1(s)+\phi_2(t))$-decoupled into B-overlapping $(\phi,\delta)$-flat parallelograms in $\mathcal{R}$ of width bounded below by $\delta$, at the cost of $C_\varepsilon\delta^{-\varepsilon}$, where $B$ and $C_\varepsilon$ may only depend on $\mathcal M,p,q,\eps$.

\end{condition}

\subsubsection{Product surface}
We need a little extra regularity assumptions on the functions $r(s)$ and $\psi(t)$. Suppose that  $\psi(t)$ is polynomial-like in the sense of \cite[Definition 2.1]{LiYang2024} (which we restate below):
\begin{defn}\label{condition:2T_flat}
    We say $\psi$ is polynomial-like with parameter $C_{\mathrm{poly}}\ge 1$ if for every parallelogram $T\sub [-3,3]^l$ such that $2T\sub [-3,3]^l$, we have $\norm{\psi}_{C^2(2T)}\le C_{\mathrm{poly}} \norm{\psi}_{C^2(T)}$.
\end{defn}
For example, if $\psi$ is a nondegenerate function as in Definition \ref{defn:general_psi}, then $\psi$ is polynomial-like, by a remark after \cite[Corollary 2.1]{LiYang2024}.

We further assume that either of the following assumptions hold:
\begin{itemize}
    \item $r(s)$ is affine, or
    \item $L\psi(t)=\psi(t)-t\cdot \nabla \psi(t)\ne 0$.
\end{itemize}

\fbox{Notation} Let $r(s)$ and $\psi(t)$ be given as above. Every implicit constant below in this subsection is allowed to depend on $p,q,l,C_{\mathrm{poly}},\norm{r}_{C^2},\norm{\psi}_{C^2}$ and $\inf_{t\in [-3,3]^l} |L\psi(t)|$.\\

We are now ready to state the radial principle of decoupling. For each $\delta>0$, we define the $\delta$-neighbourhood of the ``product" surface:
\begin{align}
    \Pi_\delta(S,T)&:=\{(s,r(s)t,u):s\in S,t\in T,|u-r(s)\psi(t)|\le \delta\}.\label{eqn:defn_Pi(S,T)}
\end{align}

\begin{thm}[Radial principle of decoupling]\label{thm:radial_principle}
    Assume that Condition \ref{condition:decoupling} holds for some $\mathcal{M}$ that contains the graph of $r(s)+\psi(t)$. Then for$\eps\in (0,1)$ and $0<\delta\ll 1$, there exist families $\mathcal{S}_\delta$ and $\mathcal{T}_\delta$ of parallelograms contained in $[-2,2]^k$, $[-2,2]^l$, respectively, such that the set $\Pi_\delta([-1,1]^k,[-1,1]^l)$ can be decoupled into the following family
    \begin{equation}\label{eqn:defn_mathcal_Pdelta}
        \mathcal P_\delta:=\{\Pi_{\delta}(S,T):S\in \mathcal S_\delta,\,\,T\in \mathcal T_\delta\}
    \end{equation}
    of almost-rectangles with width at least $\delta$ at cost $\le C'_\eps\delta^{-\eps}$. Here, the constant $C'_\eps$ and the overlap functions of $\mathcal S_\delta$, $\mathcal T_\delta$ and $\mathcal P_\delta$ may only depend on $\mathcal M,p,q,\eps$.

Moreover, we have the following special case: If $\det D^2 \psi \neq 0$, then $\mathcal T_\delta$ can be taken to be a tiling of $[-1,1]^l$ by cubes of side length $\delta^{1/2}$.
\end{thm}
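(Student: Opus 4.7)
The plan is to combine an induction on scales with a local algebraic identity that exhibits the product surface as a sum surface up to a small cross-term, so that Condition~\ref{condition:decoupling} can be invoked. Fix $\eps>0$ and let $D(\delta)$ denote the optimal decoupling constant for $\Pi_\delta([-1,1]^k,[-1,1]^l)$ into pieces of the form $\Pi_\delta(S,T)$. The goal is to establish the recursive estimate
\[
D(\delta) \le D(\sigma)\cdot C_\eps (\delta/\sigma)^{-\eps/2}
\]
for an appropriate intermediate scale $\sigma\in(\delta,1)$ (e.g.\ $\sigma=\delta^{1-\eta}$ for a small $\eta$); the desired bound $D(\delta)\le C'_\eps\delta^{-\eps}$ then follows by the standard bootstrapping argument against the trivial bound $D(1)\lesssim 1$.

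For the refinement step, I would apply the inductive hypothesis to decouple $\Pi_\sigma([-1,1]^k,[-1,1]^l)$ into $\sigma$-flat pieces $\Pi_\sigma(S,T)$ at cost $D(\sigma)$, then rescale each piece back to unit scale using the rescaling invariance of $(\mathcal M,\{\mathcal R\})$, with the remaining resolution being $\delta/\sigma$. On the rescaled piece, I substitute $\tau:=r(s_\star)t$ based at a reference point $s_\star\in S$, writing $f(s)=r(s)$ and $g(\tau)=\psi(\tau/r(s_\star))$. The algebraic identity
\[
f(s)g(\tau)=f(s_\star)g(\tau)+g(\tau_\star)f(s)-f(s_\star)g(\tau_\star)+[f(s)-f(s_\star)][g(\tau)-g(\tau_\star)]
\]
displays the product as a sum surface $f(s_\star)g(\tau)+g(\tau_\star)f(s)$ plus an affine constant and a cross-term of size $\lesssim|s-s_\star||\tau-\tau_\star|$ (with implicit constant depending on $r,\psi$). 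At scale $\delta/\sigma$ on a unit-size rescaled piece, the cross-term is controlled by $\delta/\sigma$, so the $\delta/\sigma$-vertical neighbourhood of the rescaled product surface coincides, up to bounded dilation, with that of the sum surface $af(s)+bg(\tau)\in\mathcal M$; Condition~\ref{condition:decoupling} then decouples this sum surface at scale $\delta/\sigma$ at cost $C_\eps(\delta/\sigma)^{-\eps/2}$, and pulling back through the rescaling and the substitution $\tau=r(s_\star)t$ yields the decoupling of $\Pi_\delta(S,T)$, from which $\mathcal S_\delta$ and $\mathcal T_\delta$ and their overlap bounds are extracted.

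For the \emph{moreover} clause, when $\det D^2\psi\ne 0$ a direct Taylor estimate shows that $\psi$ is $\delta$-flat on every $\delta^{1/2}$-cube in $[-1,1]^l$, so $\mathcal T_\delta$ can be chosen as the fixed $\delta^{1/2}$-cube tiling and no further refinement in the $t$-variable is needed. The hypothesis that either $r$ is affine or $L\psi\ne 0$ enters to guarantee that the rescaled sum surfaces remain bona fide elements of $\mathcal M$ (cf.\ Proposition~\ref{prop_surface_condition}), while the polynomial-like property of $\psi$ (Definition~\ref{condition:2T_flat}) keeps the $C^2$-bounds and implicit constants uniform under iterated rescaling. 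The hard part will be the careful bookkeeping of the cross-term error together with the overlap functions through the rescaling, so that at each iteration Condition~\ref{condition:decoupling} is applied to elements of $\mathcal M$ with the same controlling parameters and the quoted dependence $C'_\eps\delta^{-\eps}$ is preserved.
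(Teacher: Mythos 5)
The theorem you are trying to prove is not actually proved in this paper: the authors explicitly import it from \cite[Section~2]{LiYang2024}, stating ``The second principle we state in this section is the radial principle, formulated in \cite[Section 2]{LiYang2024}.~We state a slightly different but simplified version here.'' So there is no internal proof to compare against. That said, your proposal has a genuine gap that would stop it from working as written, which I explain below.

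The critical problem is the cross-term estimate. After decoupling into a $(\phi,\sigma)$-flat piece $S\times T$ and rescaling the vertical direction by $\sigma^{-1}$ to normalise the piece, the cross-term $\bigl(f(s)-f(s_\star)\bigr)\bigl(g(\tau)-g(\tau_\star)\bigr)$ is of genuine size $O(1)$ on the rescaled unit piece, not $O(\delta/\sigma)$. Indeed, $\sigma$-flatness of the original piece forces (writing $\mu_f:=\sup_S|f-f_\star|$ and $\mu_g:=\sup_T|g-g_\star|$) only that the product of oscillations satisfies $\mu_f\mu_g\lesssim\sigma$; the cross-term therefore saturates at size $\sim\sigma$, and the $\sigma^{-1}$ rescaling turns it into a bounded, genuinely nonaffine term of the form $u(s)v(\tau)$. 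This is a fresh \emph{product-type} term, not a sum-type error, so Condition~\ref{condition:decoupling} does not apply to the approximate surface and the approximation does not close at scale $\delta/\sigma$. The Lipschitz bound $\lesssim|s-s_\star|\,|\tau-\tau_\star|$ that you cite is correct, but on the rescaled unit piece both factors are $O(1)$, so it gives no gain.

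A second, related issue is the choice of change of variables. You set $\tau:=r(s_\star)t$, a fixed linear rescaling, and take $g(\tau)=\psi(\tau/r(s_\star))$. However, the coordinate-space variable that appears in the definition of $\Pi_\delta(S,T)$ (eq.~\eqref{eqn:defn_Pi(S,T)}) is $r(s)t$, not $r(s_\star)t$. With your substitution, the middle coordinate becomes $r(s)t=\bigl(1+(r(s)-r(s_\star))/r(s_\star)\bigr)\tau$, a perturbation that depends jointly on $s$ and $\tau$ and shears the decoupling tiles at each step. The heart of the radial trick, as described in the paper's own introduction, is the \emph{nonlinear} substitution $\tau:=r(s)t$, which makes the middle coordinate identically $\tau$: the surface then becomes a genuine graph over $(s,\tau)$ as $r(s)\psi(\tau/r(s))$, and the Taylor expansion in $r(s)-r(s_\star)$ produces $a\psi(\tau/a)+\bigl(r(s)-a\bigr)L\psi(\tau/a)+O\bigl((r(s)-a)^2\bigr)$, in which the coefficient of $r(s)$ is $L\psi$ (this is exactly why the hypothesis $L\psi\neq 0$ is imposed). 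Your fixed-rescaling substitution misses this structure entirely. Both gaps need to be addressed before the argument can be made to work, and the full resolution (handling of the cross-term, the iterated rescaling, and the role of the polynomial-like assumption) is the content of the proof in \cite{LiYang2024}, which the present paper simply cites.
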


\section{Decoupling for Type I surfaces of revolution}\label{sec:typeI}

In this section, we prove Theorem \ref{thm:main_general_revolution} for Type I surfaces, namely, decoupling for the surface parametrised by
\begin{equation*}
    X=(s,r(s)t,r(s)\psi(t)),
\end{equation*}
where $s\in [-1,1]^2$, $r$ is an arbitrary smooth function taking values in $[1,2]$, $t\in [-1,1]^l$ and $\psi\in C^2$ is a nondegenerate function as in Definition \ref{defn:general_psi}. Thus, $\psi$ is polynomial-like as in Definition \ref{condition:2T_flat}. Also, by Corollary \ref{cor:decoupling_condition_bivariate}, we can check that Condition \ref{condition:decoupling} holds. Applying Theorem \ref{thm:radial_principle} finishes our task.

\section{Pseudo-polynomials}\label{sec:pseudo-polynomials}

To study the decoupling of the much more difficult Type II surfaces, we need to first introduce the notion of pseudo-polynomials. In simple words, pseudo-polynomials are univariate functions arising from zeros of bivariate polynomials satisfying certain scale assumptions.

Let $\Omega_0\sub \R^2$ be a parallelogram of the form
    \begin{equation}\label{eqn:Omega_0}
        \Omega_0=\{(s_1,s_2)\in \R^2:s_1\in [-1,1],|s_2-ms_1-b|< h\},
    \end{equation}
where $m,b\in [-1,1]$ and $h\in (0,1]$. 

Let $P\in \mathcal P_{2,d}$. Assume for some $\sigma_1\lesssim \sigma_2\lesssim 1$ that
\begin{equation} \label{eqn:psudo_poly_condition}
    |\partial_1 P(s)|\lesssim \sigma_1, \quad |\partial_2 P(s)|\sim \sigma_2,  \quad \forall s \in \Omega_0.
\end{equation}
Here and throughout this section, the implicit constants depend on $d$ only.

\begin{prop} \label{prop:exist_g}
    Let $Y$ be the range of of the polynomial $P$ over $\Omega_0$. Then $Y$ is an open interval, and for each $y\in Y$, the set $P^{-1}(y)\cap \Omega_0$ is the union of $O_d(1)$ many graphs of algebraic functions $g_i(s_1)$ over a subinterval $I_{i}\sub [-1,1]$. Moreover, $|g'_i(s_1)|\lesssim_d \frac{\sigma_1}{\sigma_2}\lesssim 1$.
\end{prop}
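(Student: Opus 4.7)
The plan is to use the non-vanishing of $\partial_2 P$ to realize $P^{-1}(y)\cap \Omega_0$ as the graph of a single-valued implicit function, and to count the connected components of its domain by a sign-set argument on two univariate polynomials.

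First I would exploit the connectedness of $\Omega_0$: since $|\partial_2 P|\sim \sigma_2>0$ on $\Omega_0$, the function $\partial_2 P$ has constant sign there, and without loss of generality it is positive. Hence for each fixed $s_1\in [-1,1]$, the map $s_2\mapsto P(s_1,s_2)$ is strictly increasing on the open slice $\{s_2:|s_2-ms_1-b|<h\}$ and maps it bijectively onto an open interval. This yields both that $Y$ is connected (as the continuous image of a connected set) and that every point of $Y$ has an open neighbourhood inside $Y$ (obtained from the image of such a slice), so $Y$ is an open interval.

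Next, fix $y\in Y$. By the strict monotonicity above, each vertical line $\{s_1\}\times \R$ meets $P^{-1}(y)\cap \Omega_0$ in at most one point, so this set is automatically the graph of a single-valued function $g$ on some domain $D\subseteq [-1,1]$. To describe $D$, I would introduce the two univariate polynomials
\begin{equation*}
    F_\pm(s_1):=P(s_1,ms_1+b\pm h)-y,
\end{equation*}
each of degree at most $d$. The condition $(s_1,g(s_1))\in \Omega_0$ then translates, via the monotonicity of $P(s_1,\cdot)$, into $F_-(s_1)<0<F_+(s_1)$. Both $F_\pm$ are nonzero (as $y$ is attained at some interior point of a slice), so each has at most $d$ zeros in $[-1,1]$; the product $F_-F_+$ therefore has at most $2d$ zeros, partitioning $[-1,1]$ into $O_d(1)$ relatively open subintervals on which the sign pattern of $(F_-,F_+)$ is constant. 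Consequently $D$ is a union of at most $O_d(1)$ such subintervals $I_i$, and the restriction $g_i:=g|_{I_i}$ gives the required branch.

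Finally, on each $I_i$ the identity $P(s_1,g_i(s_1))\equiv y$ together with the non-vanishing of $\partial_2 P$ allows the implicit function theorem to produce $g_i$ as an algebraic function (it is implicitly defined by the polynomial equation $P(s_1,s_2)=y$), with
\begin{equation*}
    g_i'(s_1) = -\frac{\partial_1 P(s_1,g_i(s_1))}{\partial_2 P(s_1,g_i(s_1))}.
\end{equation*}
The hypothesis \eqref{eqn:psudo_poly_condition} then immediately yields $|g_i'(s_1)|\lesssim \sigma_1/\sigma_2\lesssim 1$. No step seems to be a serious obstacle; the only mildly delicate point is organising the count of components of the domain, which is handled cleanly by the sign sets of the two auxiliary univariate polynomials $F_\pm$.
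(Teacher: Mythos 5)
Your proof is correct and follows the same line as the paper: constant sign of $\partial_2 P$ gives monotonicity of $P(s_1,\cdot)$ on each slice, from which openness and connectedness of $Y$ follow, the level set is a single-valued graph, and the implicit function theorem supplies both algebraicity and the bound $|g_i'|\lesssim_d \sigma_1/\sigma_2$. The only place where you are more explicit is the $O_d(1)$ count of domain components, which you derive cleanly from the sign pattern of the boundary polynomials $F_\pm(s_1)=P(s_1,ms_1+b\pm h)-y$; the paper compresses this step into a brief appeal to the fundamental theorem of algebra, so your version is tighter on that point but not a different approach.
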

\begin{proof}[Proof of Proposition \ref{prop:exist_g}]
    We may assume $\partial_2 P>0$. For each $s_1$, the set $\{P(s_1,s_2):|s_2-ms_1-b|<h\}$ is an open interval $I(s_1)$ given by $(P(s_1,ms_1+b-h, P(s_1,ms_1+b+h))$. The range $Y$ is thus the union of $I(s_1)$ over $s_1\in [-1,1]$, and so $I$ is an open interval. For every $y\in Y$, by the fundamental theorem of algebra, the set $P^{-1}(y)\cap \Omega_0$ is the union of $O_d(1)$ many algebraic curves, and by the implicit function theorem, each one is the graph of some function $g(s_1)$ over a subinterval $I_i\sub [-1,1]$ with $|g'_i(s_1)|\lesssim \frac{\sigma_1}{\sigma_2}\lesssim 1$.
\end{proof}

For decoupling consideration, we may of course locate to one such interval $I_i$. For simplicity, we may abuse notation and pretend there is only one such $i$, and denote $g=g_i$, $I_0=I_{0,i}$.

\begin{defn}\label{eqn:defn_pseudo_polynomials}
    Such function $g:I_0\to \R$ obeying $P(s_1,g(s_1))=y$ will be referred to as a (univariate) {\it pseudo-polynomial} of degree (at most) $d$.
\end{defn}

We record a few propositions below for future use. In all the following, the degree $d$ of the pseudo-polynomial will not change.
\begin{prop}\label{prop:rescaling_pseudo}
    If $g$ is a pseudo-polynomial over $I_0$, and $\lambda:[-1,1]\to I_0$ is an affine bijection, then $f:=g\circ \lambda$ is also a pseudo-polynomial.
\end{prop}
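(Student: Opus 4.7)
The plan is to define $Q(s_1,s_2) := P(\lambda(s_1), s_2)$ and take $\Omega_0'$ to be the preimage of $\Omega_0$ under the map $(s_1, s_2) \mapsto (\lambda(s_1), s_2)$. Writing $\lambda(s_1) = as_1 + c$ with $|a|+|c|\le 1$ (which is forced by $\lambda:[-1,1]\to I_0 \subseteq [-1,1]$ being an affine bijection), I would first verify that $Q \in \mathcal{P}_{2,d}$: it is a bivariate polynomial of total degree at most $d$, and for $(s_1,s_2)\in[-1,1]^2$ we have $(\lambda(s_1),s_2)\in[-1,1]^2$, so $|Q|\le \sup_{[-1,1]^2}|P|\le 1$. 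The identity
$$Q(s_1, f(s_1)) = P(\lambda(s_1), g(\lambda(s_1))) = y$$
then places $f$ on the level set $Q^{-1}(y)$ by construction.

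Next I would check that $\Omega_0'$ has the form \eqref{eqn:Omega_0}. Direct computation gives
$$\Omega_0' = \{(s_1,s_2):s_1\in[-1,1],\ |s_2 - (ma)s_1 - (mc+b)|<h\},$$
with parameters $m' = ma$, $b' = mc+b$, $h' = h$. Each of $|m'|$, $|b'|$, $h'$ is bounded by an absolute constant, compatible with Definition \ref{eqn:defn_pseudo_polynomials} up to the implicit $d$-dependent constants already built into the $\lesssim$ notation. Moreover, the graph of $f$ lies in $\Omega_0'$ because the graph of $g$ lies in $\Omega_0$ and the pullback is tangential in the $s_1$-direction only.

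Finally, the derivative bounds transfer through the chain rule: $\partial_2 Q(s_1,s_2) = \partial_2 P(\lambda(s_1),s_2)$, so $|\partial_2 Q|\sim \sigma_2$ on $\Omega_0'$; and $\partial_1 Q(s_1,s_2) = a\cdot \partial_1 P(\lambda(s_1),s_2)$, so $|\partial_1 Q|\lesssim |a|\sigma_1\le \sigma_1$. Setting $\sigma_1' := |a|\sigma_1$ and $\sigma_2' := \sigma_2$ retains the relation $\sigma_1' \lesssim \sigma_2' \lesssim 1$ required by \eqref{eqn:psudo_poly_condition}. Together with the level set identity above, this exhibits $f$ as a pseudo-polynomial of degree at most $d$. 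I anticipate no serious obstacle; the proof is essentially a pullback calculation, with the only mild point being the bookkeeping of the normalization constants attached to $\Omega_0'$.
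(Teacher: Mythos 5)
Your proposal is correct and follows the same approach as the paper's proof: defining $Q(s_1,s_2):=P(\lambda(s_1),s_2)$ so that $Q(s_1,f(s_1))=y$, and observing that the chain rule gives $\partial_1 Q = \lambda'\cdot\partial_1 P(\lambda(\cdot),\cdot)$ and $\partial_2 Q = \partial_2 P(\lambda(\cdot),\cdot)$, preserving the bounds in \eqref{eqn:psudo_poly_condition}. The paper's proof is terse (three sentences) and omits the bookkeeping you do for $Q\in\mathcal{P}_{2,d}$ and the shape of $\Omega_0'$, but the underlying argument is identical.
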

\begin{proof}
    By assumption, let $P(s_1,g(s_1))=y$ for $s_1\in I_0$, such that \eqref{eqn:psudo_poly_condition} holds. Define $Q(s_1,s_2):=P(\lambda s_1,s_2)$, so that $Q(s_1,f(s_1))=y$. Note that we still have $|\partial_1 Q(s)|\lesssim \sigma_1\lesssim \sigma_2\sim |\partial_2 P(s)|$.
\end{proof}

\begin{prop}\label{prop:fta_pseudo}
    For each pseudo-polynomial $g$ of degree at most $d$ and each real number $c$, the equation $g(s_1)=c$ either always holds in $I_0$, or it has at most $d$ solutions in $I_0$. Moreover, the equations $g'(s_1)=c$ and $g''(s_1)=c$ either always hold, or they only have $O_d(1)$ many solutions.
\end{prop}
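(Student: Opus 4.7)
The proofs of all three claims rest on the same mechanism: translate the condition on $g$ into the vanishing of some auxiliary bivariate polynomial along the graph of $g$, and control its zeros by B\'ezout's theorem applied to the irreducible algebraic component of $\{P=y\}$ that contains this graph.

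Concretely, let $F_0 \in \R[s_1,s_2]$ be the (real) irreducible factor of $P(s_1,s_2)-y$ whose zero set contains the graph of $g$ over $I_0$. Such an $F_0$ exists and is unique: the graph is connected (as $I_0$ is), and lies in the smooth locus of $\{P-y=0\}$ because $\partial_2 P \sim \sigma_2 > 0$ there, so the graph cannot pass between distinct irreducible components (which would force a singular point of $P-y$). Note $\deg F_0 \leq d$.

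For the equation $g(s_1) = c$, it is equivalent to $F_0(s_1,c)=0$. If the univariate polynomial $F_0(\cdot,c)$ is not identically zero, it has degree $\leq d$ and hence at most $d$ roots. Otherwise $\{s_2=c\} \subseteq \{F_0=0\}$, and irreducibility of $F_0$ forces $F_0=\alpha(s_2-c)$, whence $g \equiv c$ on $I_0$. For the equations $g'(s_1)=c$ and $g''(s_1)=c$, implicit differentiation of $P(s_1,g(s_1))=y$ (valid since $|\partial_2 P|\gtrsim \sigma_2 > 0$) expresses $g'$ and $g''$ as rational functions in the partial derivatives of $P$ evaluated at $(s_1,g(s_1))$; clearing denominators, each condition becomes $R_i(s_1,g(s_1))=0$ for some $R_i \in \R[s_1,s_2]$ whose total degree is bounded solely in terms of $d$. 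Apply B\'ezout to $F_0$ and $R_i$: if $F_0 \mid R_i$, then $R_i$ vanishes on the entire graph of $g$ and the equation holds throughout $I_0$; otherwise $\gcd(F_0,R_i)=1$, and $\{F_0=0\}\cap\{R_i=0\}$ has at most $\deg F_0 \cdot \deg R_i = O_d(1)$ points (counted over $\C$), which bounds the number of $s_1 \in I_0$ satisfying the equation.

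The main technical point is isolating the correct irreducible component $F_0$ and justifying that the whole graph of $g$ sits inside it; once this is set up, B\'ezout handles every case uniformly, and the only remaining bookkeeping is that the auxiliary polynomials $R_i$ arising from the implicit differentiation have total degree bounded in terms of $d$ alone.
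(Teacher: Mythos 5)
Your proof is correct and uses the same essential mechanism as the paper: rewrite each condition as the simultaneous vanishing of two bivariate polynomials along the graph of $g$, and bound the number of solutions by B\'ezout. The paper handles $g(s_1)=c$ more directly via the fundamental theorem of algebra applied to the univariate polynomial $h(s_1):=P(s_1,c)$ (and for $g''=c$ applies B\'ezout to the pair $P-y$ and the auxiliary $R$ without singling out an irreducible factor). Your refinement of isolating the irreducible component $F_0$ of $P-y$ containing the graph — justified via the smooth-locus argument, since $\partial_2 P\neq 0$ along the graph — is a genuine improvement in precision: it makes the dichotomy between ``$F_0\mid R_i$ (always holds)'' and ``$\gcd(F_0,R_i)=1$ (finitely many solutions by B\'ezout)'' airtight, whereas the paper's treatment of the common-component case is somewhat loose (a common component of $\{P-y=0\}$ and $\{R_i=0\}$ meeting the square need not coincide with the portion of the curve carrying the graph of $g$). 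Both approaches give the same $O_d(1)$ bounds, since the auxiliary polynomials from implicit differentiation have degree $\lesssim d$ and $3d$ respectively. Your version is the cleaner write-up of the same idea.
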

\begin{proof}
For each $s_1\in I_0$, $g(s_1)=c$ means $h(s_1):=P(s_1,c)=y$ for some $y\in \R$. If the polynomial $h$ is constant, this means $h'(s_1)=\partial_1 P(s_1,c)\equiv 0$, which is a contradiction to $|\partial_1 P|\sim \sigma_1$. Thus, $h$ cannot be constant, and so the result follows from the fundamental theorem of algebra.

We only give the proof of the assertion about $g''$ since the assertion about $g'$ is similar and easier. A direct computation shows that
    \begin{equation}\label{eqn_F1F3_pseudo}
    g''(s_1) = - \frac{P_{11} P_2^2 - 2 P_1 P_2 P_{12} + P_1^2 P_{22}}{P_2^3} (s_1,g(s_1)).
    \end{equation}   

    Therefore, $g''(s_1) -c = 0$ if and only if
    $$
    \begin{cases}
        (P_{11} P_2^2 - 2 P_1 P_2 P_{12} - P_1^2 P_{22} + c P^3_2) (s_1,s_2) = 0, \\
        P(s_1,s_2) = 0.
    \end{cases}
    $$
    Therefore, the number of solutions of $g''=c$ is exactly the number of intersection of the two algebraic curves of degree $d$ and $3d$, respectively. 
    
    If the two algebraic curves have common components and the zero set of one of these components has nonempty intersection with the interior of the square $[-1,1]^2$, there exists an open interval where $g'' = c$. 

    If the two algebraic curves have no common components, then by Bezout's theorem, there are at most $3d^2$ many zeros.
\end{proof}

\begin{prop}\label{prop:I_2} 
$g''$ and all its derivatives are bounded above by $O_d(1)$.
\end{prop}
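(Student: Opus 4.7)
The plan is to start from the formula
\[
g''(s_1) = -\frac{P_{11} + 2P_{12}g'(s_1) + P_{22}g'(s_1)^2}{P_2}\bigg|_{s_2=g(s_1)},
\]
which is the reorganization of \eqref{eqn_F1F3_pseudo} using $P_1=-P_2 g'$, and to bound the numerator via Markov's inequality on a straightened strip. First, by Proposition~\ref{prop:rescaling_pseudo} I may rescale so that $I_0=[-1,1]$. Then I would apply the affine change of variables $u=s_1$, $v=s_2-ms_1-b$, which sends $\Omega_0$ to the axis-aligned rectangle $[-1,1]\times[-h,h]$. Writing $\tilde P(u,v):=P(u,v+mu+b)$ and $\tilde g(u):=g(u)-mu-b\in[-h,h]$, we have $g''=\tilde g''$, $|\tilde P|\lesssim_d 1$, $|\tilde P_u|=|P_1+mP_2|\lesssim\sigma_2$, and $|\tilde P_v|=|P_2|\sim\sigma_2$ on the rectangle. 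Markov's inequality applied on this rectangle then yields $|\tilde P_{uu}|,|\tilde P_{uv}|\lesssim_d\sigma_2$ and $|\tilde P_{vv}|\lesssim_d\sigma_2/h$.

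The crux of the argument, and the main obstacle, is to improve the naive bound $|\tilde g'|\lesssim_d 1$ (coming from $|\tilde g'|\leq|\tilde P_u|/|\tilde P_v|\lesssim 1$) to the sharper bound $|\tilde g'|\lesssim_d h$. Without this, the term $\tilde P_{vv}(\tilde g')^2/\tilde P_v$ appearing in the formula for $\tilde g''$ is only controlled by $1/h$, not $O_d(1)$. I would obtain the sharper bound by exploiting the strip constraint $|\tilde g|\leq h$: since $|\tilde P_v|\lesssim\sigma_2$,
\[
|\tilde P(u,0)-y|=|\tilde P(u,0)-\tilde P(u,\tilde g(u))|\leq h\cdot\sup|\tilde P_v|\lesssim h\sigma_2
\]
for all $u\in[-1,1]$. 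Applying one-dimensional Markov to the polynomial $u\mapsto\tilde P(u,0)-y$ of degree $\leq d$ gives $|\tilde P_u(u,0)|\lesssim_d h\sigma_2$, and then the bound $|\tilde P_{uv}|\lesssim\sigma_2$ transfers this to the curve to obtain $|\tilde P_u(u,\tilde g(u))|\lesssim_d h\sigma_2$. Hence $|\tilde g'(u)|=|\tilde P_u|/|\tilde P_v|\lesssim_d h$. Plugging the four bounds on $\tilde P_{uu},\tilde P_{uv},\tilde P_{vv},\tilde g'$ into the formula for $\tilde g''$ then yields $|g''|=|\tilde g''|\lesssim_d(\sigma_2+\sigma_2 h+(\sigma_2/h)h^2)/\sigma_2\lesssim_d 1$.

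For the higher derivatives $g^{(k)}$, $k\geq 3$, I would proceed by induction on $k$. Differentiating $P(s_1,g(s_1))=y$ a total of $k$ times via Fa\`a di Bruno and solving for $g^{(k)}$ expresses it rationally in terms of $\partial_u^a\partial_v^b\tilde P$ for $a+b\leq k$ and the lower derivatives $g^{(j)}$ with $1\leq j<k$, divided by $\tilde P_v\sim\sigma_2$. Iterated Markov on the rectangle gives $|\partial_u^a\partial_v^b\tilde P|\lesssim_{d,k}\sigma_2/h^{\max(0,b-1)}$. Combined with the sharp base bound $|g'|\lesssim h$ and the inductive hypothesis $|g^{(j)}|\lesssim_{d,k}1$ for $2\leq j<k$, one verifies term by term that every contribution to the numerator is $\lesssim_{d,k}\sigma_2$, since the dangerous factors of $1/h^{b-1}$ are exactly compensated by the powers of $h$ coming from $(g')^{m_1}$ whenever $v$-derivatives of $\tilde P$ appear. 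Dividing by $|\tilde P_v|\sim\sigma_2$ closes the induction.
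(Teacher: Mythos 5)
Your bound on $g''$ is correct, and the key refinement $|\tilde g'|\lesssim_d h$, obtained from $|\tilde P(u,0)-y|\lesssim h\sigma_2$ followed by one-dimensional Markov and the transfer via $\tilde P_{uv}$, is sound. However, the inductive step for $k\ge 3$ does not close as written. The claim that the factors $1/h^{b-1}$ from $\partial_u^a\partial_v^b\tilde P$ are ``exactly compensated by the powers of $h$ coming from $(g')^{m_1}$'' fails whenever a term carries more than one higher-order factor of $\tilde g$. For instance, $\frac{d^4}{du^4}\tilde P(u,\tilde g(u))$ contains the term $3\tilde P_{vv}(\tilde g'')^2$, which has $b=2$ but $m_1=0$; under your stated hypothesis $|g''|\lesssim 1$ this term is only $\lesssim\sigma_2/h$, and after dividing by $\tilde P_v\sim\sigma_2$ you get $1/h$, which is not $O_d(1)$.

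The gap is repairable, but it requires strengthening the inductive hypothesis. Note that the argument you gave for $|\tilde P_u(u,\tilde g(u))|\lesssim h\sigma_2$ actually extends to $|\tilde P_u|\lesssim h\sigma_2$ on the whole rectangle, and more generally $|\tilde P-y|\lesssim h\sigma_2$ on $[-1,1]\times[-h,h]$ yields via Markov the sharper bound $|\partial_u^a\partial_v^b\tilde P|\lesssim_{d}\sigma_2 h^{1-b}$ for all $a+b\ge 1$. If you then carry the hypothesis $|\tilde g^{(j)}|\lesssim_d h$ for all $1\le j<k$ (rather than $\lesssim 1$), every term satisfies $|\partial_u^a\partial_v^b\tilde P|\prod_j|\tilde g^{(j)}|^{m_j}\lesssim\sigma_2 h^{1-b}\cdot h^{b}=\sigma_2 h$, so dividing by $\tilde P_v$ closes the induction with $|\tilde g^{(k)}|\lesssim_d h\lesssim 1$. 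This is exactly what the paper's proof builds in at the outset: it rescales via $T_\Omega$ and sets $\tilde P := (h\sigma_2)^{-1}P\circ T_\Omega\in\mathcal P_{2,d}$ with $|\partial_2\tilde P|\sim 1$, and $\tilde f := h^{-1}(f-f(0)-f'(0)s_1)$, so that every derivative of $\tilde P$ is automatically $O_d(1)$ and the implicit function theorem gives $|\tilde f^{(k)}|\lesssim_d 1$ directly; the factor of $h$ you must track by hand is absorbed into the normalization of $\tilde f$. Your approach is more explicit and elementary where it works, but the paper's single rescaling makes the bookkeeping trivial and avoids precisely the miscounting in your last paragraph.
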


\begin{proof}
Let $\lambda:[-1,1]\to I_0$ be an affine bijection, and denote $f:=g\circ \lambda$. We then need to show that for each $k\ge 2$, $f^{(k)}$ is bounded above by $O_d(|I_0|^k)$.

Let $h$ be the smallest positive number such that
\begin{align}
    &\{(s_1,s_2):s_1\in I_0,|s_2-f(s_1)|<\delta\}\nonumber\\
    \sub \,\,& \Omega:=\{(s_1,s_2):s_1\in I_0,|s_2-f(0)-f'(0)s_1|\le h\},\label{eqn_defn_B}
\end{align}
so that in $\Omega$ we have $|\partial_2 P|\sim \sigma_2$. Let 
     $$
     T_\Omega(s_1,s_2) = (\lambda(s_1), h s_2 + f'(0)s_1+f(0))
     $$ be an affine map that sends $[-1,1]^2$ to $\Omega$. Let 
     $$
     \Tilde{P}(s_1,s_2) : = (h \sigma_2)^{-1} (P\circ T_\Omega) = (h \sigma_2)^{-1} P (\lambda(s_1), h s_2 +f(0)+  f'(0)s_1)
     $$
     and 
     $$
     \tilde f (s_1) : = h^{-1} (f(s_1)-f(0)-f'(0)s_1)
     $$
     be such that $\Tilde P(s_1, \tilde f(s_1)) =y$.
     Since in $\Omega$ we have
     $$
    |P(s_1,s_2)| = |P(s_1,s_2) - P(s_1,f(s_1))| \leq h\sup_{\Omega}|\partial_2 P| \sim  h\sigma_2,
    $$
    we see that $\tilde P\in \mathcal P_{2,d}$. Moreover, direct computation shows that $|\partial_2 \tilde P| \sim 1$ on $[-1,1]^2$. More generally, we have for each $j,k\ge 0$ that
    \begin{equation*}
        |\partial_1^k \partial_2^j\tilde P|\lesssim |I_0|^k.
    \end{equation*}    
    Hence the implicit function theorem implies that for each $k\ge 1$, 
    \begin{equation*}
        |\tilde f^{(k)}|\lesssim |I_0|^k.
    \end{equation*}    
\end{proof}
The proof above also gives the following corollary.
\begin{cor}\label{cor:I_2}
    If $g:[-1,1]\to \R$ is a pseudo-polynomial centred at $c$ such that $|g''|\sim \sigma$, then the function
    \begin{equation*}
        f(s):=\sigma^{-1}(g(s)-g(0)-g'(c)s)
    \end{equation*}
    is also a pseudo-polynomial, and $|f''|\sim 1$.
\end{cor}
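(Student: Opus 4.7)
The first assertion, $|f''|\sim 1$, follows at once by differentiating the definition of $f$ twice to obtain $f''=\sigma^{-1}g''$. The substantive claim is that $f$ is again a pseudo-polynomial, and the plan is to adapt the construction in the proof of Proposition \ref{prop:I_2} to the present setting, where the rescaling in the $s_1$-variable is trivial ($\lambda=\mathrm{id}$, $I_0=[-1,1]$) and the tangent slope used for flattening is $g'(c)$ rather than $f'(0)$.

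The key preliminary fact is that the hypothesis $|g''|\sim\sigma$ controls exactly how far the graph of $g$ deviates from the line $s_2=g(0)+g'(c)s_1$: writing $g(s_1)-g(0)-g'(c)s_1=\int_0^{s_1}(g'(u)-g'(c))\,du$ and using $|g'(u)-g'(c)|\lesssim\sigma$, one obtains
\[
    h:=\sup_{s_1\in[-1,1]}|g(s_1)-g(0)-g'(c)s_1|\sim\sigma.
\]
I would let $\Omega:=\{(s_1,s_2):s_1\in[-1,1],\,|s_2-g(0)-g'(c)s_1|\le h\}$, a parallelogram of the form \eqref{eqn:Omega_0} which contains the graph of $g$ and which, for $\sigma$ sufficiently small, sits inside $\Omega_0$ so that $|\partial_2P|\sim\sigma_2$ holds throughout $\Omega$. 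Define the affine bijection $T(s_1,s_2)=(s_1,\,hs_2+g(0)+g'(c)s_1)$ from $[-1,1]^2$ onto $\Omega$ and set
\[
    \tilde P(s_1,s_2):=(h\sigma_2)^{-1}\bigl(P(T(s_1,s_2))-y\bigr).
\]
Since $T(s_1,\,h^{-1}(g(s_1)-g(0)-g'(c)s_1))=(s_1,g(s_1))$ and $h\sim\sigma$, one has $\tilde P(s_1,\alpha f(s_1))=0$ for a constant $\alpha:=\sigma/h$ of order $1$.

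To conclude that $f$ is a pseudo-polynomial I would verify three conditions: $\tilde P\in\mathcal P_{2,d}$ up to a dimensional constant, $|\partial_2\tilde P|\sim 1$, and $|\partial_1\tilde P|\lesssim 1$ on $[-1,1]^2$. The first follows from $|P\circ T-y|\le\sup_\Omega|\partial_2P|\cdot 2h\lesssim h\sigma_2$, and the second is immediate from $\partial_2\tilde P=\sigma_2^{-1}\partial_2P$. The third, which I expect to be the main obstacle, is handled exactly as in the proof of Proposition \ref{prop:I_2}: write $\partial_1\tilde P=(h\sigma_2)^{-1}(\partial_1P+g'(c)\partial_2P)\circ T$, use the graph identity $\partial_1P+g'(s_1)\partial_2P=0$ together with the bound $|g'(s_1)-g'(c)|\lesssim\sigma$ to see that the parenthesized quantity is of order $h\sigma_2$ at points on the graph of $g$, and then extend the estimate to the rest of the thin strip $\Omega$ by a Lipschitz argument in $s_2$ (the strip having width exactly $h\sim\sigma$ so that the deviation remains admissible). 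Finally, an affine rescaling of $\tilde P$ in the $s_2$-variable by the constant $\alpha$, which preserves the pseudo-polynomial structure (cf.\ Proposition \ref{prop:rescaling_pseudo} applied after swapping roles, or a direct redefinition), produces the polynomial witnessing $f$ itself as a pseudo-polynomial.
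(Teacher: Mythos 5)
Your proof is correct and takes essentially the same approach as the paper, which treats the corollary as a byproduct of the renormalisation $g\mapsto\tilde P$ carried out in the proof of Proposition~\ref{prop:I_2}, here specialised to $\lambda=\mathrm{id}$, $I_0=[-1,1]$ and the flattening line $s_2=g(0)+g'(c)s_1$. Two minor remarks: (i) the display you give only yields the upper bound $h\lesssim\sigma$; the companion lower bound $h\gtrsim\sigma$, which is what makes $\alpha=\sigma/h$ a genuine $O(1)$ constant, deserves a line of its own --- for instance, since $f(0)=0$, $f'(c)=0$ and $|f''|\sim 1$, a one-variable version of Lemma~\ref{lem:Hessian_bounded_below} (or a direct Taylor argument around $c$) gives $\sup_{[-1,1]}|f|\gtrsim 1$, i.e.\ $h=\sigma\sup_{[-1,1]}|f|\gtrsim\sigma$; (ii) the estimate $|\partial_1\tilde P|\lesssim 1$ that you flag as the main obstacle is actually automatic once $\tilde P\in\mathcal P_{2,d}$ has been verified, since any polynomial of degree at most $d$ bounded by $1$ on $[-1,1]^2$ has first partials $O_d(1)$ there, so the graph-identity plus Lipschitz-extension argument, while it does go through (the needed bound $|\partial_{12}P+g'(c)\partial_{22}P|\lesssim\sigma_2$ on $\Omega$ is a Markov-type inequality applied to $\partial_2 P\circ T$), can be replaced by this one-line observation.
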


The following proposition is similar to Proposition \ref{prop:I_2}.
\begin{prop}\label{prop:g'_sim_sigma}
    If $g$ is a pseudo-polynomial over $I_0=[-1,1]$, and $|g'|\sim \sigma$, then the function
    \begin{equation}
        f(s):=\sigma^{-1}(g(s)-g(0))
    \end{equation}
    is also a pseudo-polynomial over $I_0=[-1,1]$ with $|f'|\sim 1$.
\end{prop}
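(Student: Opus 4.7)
The plan mirrors Proposition \ref{prop:I_2} and Corollary \ref{cor:I_2}, replacing the vertical rescaling adapted to $g''$ with one adapted to $g'$. The derivative claim $|f'|\sim 1$ is immediate from $f'(s)=\sigma^{-1}g'(s)$ and the hypothesis $|g'|\sim \sigma$, so the substance is to exhibit a polynomial $Q\in \mathcal P_{2,d}$ and a parallelogram $\Omega'$ of the form \eqref{eqn:Omega_0} realising $f$ as a pseudo-polynomial.

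Let $(P,\Omega_0,y,\sigma_1,\sigma_2)$ witness that $g$ is a pseudo-polynomial, and let $h$ denote the $s_2$-width of $\Omega_0$. The affine map $T(s_1,s_2)=(s_1,\sigma s_2+g(0))$ carries the graph of $f$ onto the graph of $g$, because $g=\sigma f+g(0)$. I would take
\[
\Omega':=\{s_1\in [-1,1]:|s_2-f'(0)s_1|\le h'\}
\]
for a small absolute constant $h'\le h/2$; a uniform rescaling of $s_1$, permitted by Proposition \ref{prop:rescaling_pseudo}, ensures $|f'(0)|\le 1$, so that $\Omega'$ has the form \eqref{eqn:Omega_0}. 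The graph of $f$ sits in $\Omega'$ because $f(0)=0$ and $|f|,|f'(0)|\lesssim 1$, and $T(\Omega')$ is the $\sigma h'$-thick tube $\{s_1\in [-1,1]:|s_2-g(0)-g'(0)s_1|\le \sigma h'\}$ around the tangent line to $g$ at $0$, which sits inside $\Omega_0$ thanks to $|g(s_1)-g(0)-g'(0)s_1|\lesssim \sigma$, a consequence of $|g'|\sim \sigma$.

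I then set $Q(s_1,s_2):=c_d^{-1}(P(T(s_1,s_2))-y)$, where $c_d$ is a constant chosen to put $Q\in \mathcal P_{2,d}$; this is possible since $T([-1,1]^2)\sub [-1,1]\times [-3,3]$ and polynomials of degree $\le d$ bounded by $1$ on $[-1,1]^2$ are bounded by a dimensional constant on this enlarged rectangle. By construction $Q(s_1,f(s_1))\equiv 0$. On $\Omega'$ the chain rule gives $|\partial_2 Q|\sim c_d^{-1}\sigma \sigma_2$. For $\partial_1 Q=c_d^{-1}\partial_1 P\circ T$, the key input is that $|\partial_1 P|\sim \sigma\sigma_2$ on the graph of $g$ (because $g'=-\partial_1 P/\partial_2 P$ and $|g'|\sim \sigma$, $|\partial_2 P|\sim \sigma_2$); a Markov-type estimate $|\partial_{12}P|\lesssim \sigma_1/h\lesssim \sigma_2/h$ on a slight shrinking of $\Omega_0$ transports this bound to the entire tube $T(\Omega')$ of thickness $\sigma h'\le \sigma h/2$, giving $|\partial_1 Q|\lesssim c_d^{-1}\sigma\sigma_2$ on $\Omega'$. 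Setting $\sigma_1'\sim \sigma_2'\sim c_d^{-1}\sigma\sigma_2$ then verifies the pseudo-polynomial conditions. The main obstacle, in parallel with Proposition \ref{prop:I_2}, is the Markov-type transport of the graph-bound on $\partial_1 P$ to a bound on the whole thin tube $T(\Omega')$; the rest is a routine chain-rule and normalisation computation.
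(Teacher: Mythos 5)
Your proposal uses the same vertical rescaling $T(s_1,s_2)=(s_1,\sigma s_2+g(0))$ and the same witnessing polynomial $Q=c_d^{-1}(P\circ T-y)$ as the paper's proof, so the skeleton is identical. Where you diverge is in how the crucial estimate $|\partial_1 Q|\lesssim \sigma\sigma_2$ is obtained. The paper's proof simply asserts, from implicit differentiation and $|f'|\sim 1$, that ``$\sigma_1\sim\sigma_2\sigma$.'' Strictly read, this argument gives only the lower bound $\sigma_1\gtrsim\sigma\sigma_2$: on the graph itself one has $|\partial_1 P(s_1,g(s_1))|=|g'(s_1)||\partial_2 P(s_1,g(s_1))|\sim\sigma\sigma_2$, and this is dominated by whatever $\sigma_1$ is, but it does not by itself show that $\partial_1 P$ stays of size $O(\sigma\sigma_2)$ off the graph. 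To get the required upper bound one must shrink $\Omega_0$ to a tube of thickness $\sim\sigma h$ about the graph and transport the graph value of $\partial_1 P$ to the tube using $|\partial_{12}P|\lesssim \sigma_1/h\lesssim \sigma_2/h$ — exactly the Markov--Bernstein step you supply. So your proof fills in a genuine gap the paper's terse write-up leaves implicit. Two small points worth tightening: (i) the handling of the slope constraint $m\in[-1,1]$ via a preliminary rescaling of $s_1$ (using Proposition~\ref{prop:rescaling_pseudo}) is correct in spirit but slightly changes the interval over which $f$ is defined, so it is cleaner to say the constant in $|f'|\sim 1$ may be absorbed into the implicit constants allowed in \eqref{eqn:Omega_0}; (ii) you should check that the $\sigma h'$-tube around the graph of $g$ lies inside (a fixed shrinking of) $\Omega_0$, which holds precisely because $|g(s_1)-g(0)-g'(0)s_1|\lesssim\sigma$ as you note. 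Overall this is the same route as the paper but made rigorous at the step the paper elides.
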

\begin{proof}
    Let $P\in \mathcal P_{2,d}$ such that $P(s,g(s))=y$, and write $|\partial_1 P|\lesssim \sigma_1$, $|\partial_2 P|\sim \sigma_2$. Then we set $\tilde P(s_1,s_2):=c_d P(s_1,\sigma s_2+g(0))$, which lies in $\mathcal P_{2,d}$ for some suitable $c_d\sim 1$. One can check that $\tilde P(s,f(s))=0$, and $|\partial_1\tilde P|\lesssim \sigma_1$, $|\partial_2 P|\sim \sigma_2 \sigma$. But by construction, $|f'|\sim 1$, and so by the implicit function theorem, we must have $\sigma_1\sim \sigma_2\sigma$ (thus $|\partial_1 \tilde P|\sim \sigma_1$). Thus, $f$ is a pseudo-polynomial over $[-1,1]$.
\end{proof}

\begin{prop}\label{prop:inverse_pseudo}
    If a pseudo-polynomial $g$ over $I_0=[-1,1]$ satisfies $|g'|\sim 1$, then the inverse function $h$ of $g$ is also a pseudo-polynomial.
\end{prop}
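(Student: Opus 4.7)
The plan is to swap the two coordinates in the polynomial defining $g$ and then rescale the domain so that everything fits into Definition \ref{eqn:defn_pseudo_polynomials}. Since $P(s_1, g(s_1)) = y$ with $|\partial_2 P| \sim \sigma_2$ on $\Omega_0$, implicit differentiation yields
\[
g'(s_1) = -\frac{\partial_1 P(s_1, g(s_1))}{\partial_2 P(s_1, g(s_1))},
\]
which, combined with $|g'| \sim 1$ and $|\partial_1 P| \lesssim \sigma_1 \lesssim \sigma_2$, forces $\sigma_1 \sim \sigma_2$ and $|\partial_1 P| \sim |\partial_2 P| \sim \sigma_2$ on a neighborhood of the graph of $g$ inside $\Omega_0$.

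Next, define the swapped polynomial $Q(s_1, s_2) := P(s_2, s_1)$, which again lies in $\mathcal{P}_{2,d}$ because coordinate swap preserves $\sup_{[-1,1]^2}|\cdot|$. For $t = g(s)$ with $s \in [-1,1]$, one has $Q(t, h(t)) = P(h(t), t) = P(s, g(s)) = y$, so $h$ satisfies the desired algebraic relation with $Q$. Since $|g'| \sim 1$, the range $J = g([-1,1])$ is an interval of length $\sim 2$ contained in $[-3,3]$, and $|h'| \sim 1$. To fit the form required by Definition \ref{eqn:defn_pseudo_polynomials} (whose domain must be a subset of $[-1,1]$), pick an affine bijection $\mu : [-1,1] \to J$ with $|\mu'| \sim 1$, set $\tilde h := h \circ \mu : [-1,1] \to [-1,1]$, and define
\[
\tilde P(s_1, s_2) := c_d^{-1}\,\bigl[P(s_2, \mu(s_1)) - y\bigr],
\]
with $c_d = O_d(1)$ chosen via a Chebyshev-type estimate (the $L^\infty$-norm on $[-1,1]\times[-3,3]$ of a degree-$d$ polynomial is at most $C_d$ times its $L^\infty$-norm on $[-1,1]^2$) so that $\tilde P \in \mathcal{P}_{2,d}$. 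Then $\tilde P(s, \tilde h(s)) = 0$ on $[-1,1]$, and the chain rule gives
\[
\partial_1 \tilde P(s_1, s_2) = c_d^{-1}\,\mu'(s_1)\,\partial_2 P(s_2, \mu(s_1)), \qquad \partial_2 \tilde P(s_1, s_2) = c_d^{-1}\,\partial_1 P(s_2, \mu(s_1)),
\]
both of magnitude $\sim \sigma_2/c_d$ whenever $(s_2, \mu(s_1))$ lies in the neighborhood of the graph of $g$ where the derivative bounds for $P$ hold.

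Finally, since $|\tilde h'| = |h'|\cdot|\mu'| \sim 1$ and $\tilde h$ is bounded by $1$, the set
\[
\Omega_0' := \{(s_1, s_2) : s_1 \in [-1,1],\, |s_2 - \tilde h(0) - \tilde h'(0)\,s_1| < h'\}
\]
is a parallelogram of the form \eqref{eqn:Omega_0} for $h' > 0$ sufficiently small; it contains the graph of $\tilde h$ and, by continuity and the boundedness of $\tilde h''$, lies in the region where the derivative estimates above remain valid. This identifies $\tilde h$ as a pseudo-polynomial of degree at most $d$, and since $h = \tilde h \circ \mu^{-1}$ differs from $\tilde h$ only by pre-composition with an affine bijection, the same argument as in the proof of Proposition \ref{prop:rescaling_pseudo} (applied in reverse, up to degree-dependent constants) shows that $h$ itself is a pseudo-polynomial. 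The main subtlety is producing a parallelogram of the precise form \eqref{eqn:Omega_0}: the naive coordinate swap of $\Omega_0$ has its first coordinate ranging over an interval of length $\sim 2$ contained in $[-3,3]$ rather than $[-1,1]$, which is exactly why the rescaling through $\mu$ must be performed before passing to $\tilde P$.
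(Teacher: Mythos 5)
Your proposal is correct and follows the same core idea as the paper's proof: swap the two coordinates, pass to $Q(s_1,s_2)=P(s_2,s_1)$, use $|g'|\sim 1$ together with $g'=-\partial_1 P/\partial_2 P$ to force $\sigma_1\sim\sigma_2$, and observe that the reflected domain is again (essentially) of the form \eqref{eqn:Omega_0}. The extra steps you include (the affine bijection $\mu$ onto $[-1,1]$, the Chebyshev-type renormalisation $c_d$ to keep $\tilde P\in\mathcal P_{2,d}$) merely make explicit the "essentially a parallelogram of the same form" step that the paper compresses into one sentence; they are not a different route.
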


\begin{proof}
    Let $P(s_1,g(s_1))=0$, and define $Q(s_1,s_2):=P(s_2,s_1)$. Then $Q(s_2,h(s_2))=P(h(s_2),s_2)=P(s_1,g(s_1))=0$. Moreover, since $|g'|\sim 1$, in the domain $\Omega_0$ in \eqref{eqn:Omega_0} we must have $\max\{|m|,h\}\sim 1$. This means that the reflection of $\Omega_0$ about the straight line $s_2=s_1$ is also essentially a parallelogram $\Omega_0'$ of the same form as in \eqref{eqn:Omega_0}. Also, in this case, $\sigma_2\sim \sigma_1$, and $|\partial_1 Q|\sim |\partial_2 Q|\sim \sigma_1$ in $\Omega_0'$.
\end{proof}

\section{Family of surfaces}\label{sec:family_surfaces}
In this section, we will embed Type II surfaces of revolution into a large family that enjoys rescaling invariance. 

Recall from \eqref{eqn:TypeII_sec2} that Type II surfaces are given by 
\begin{equation}\label{eqn:TypeII_before_reduction}
        \{(s_1,x(s_1,r),rt,r\psi(t)):s_1\in [-1,1],r\in [1,2],t\in [-1,1]^l\}.
\end{equation}

\subsection{Preliminary reductions}
For technical reasons, we start with some reductions first. First, for simplicity of notation later, we swap the second coordinate $x(s_1,r)$ with the last coordinate. Also, since $r\in [1,2]$, we may equivalently write \eqref{eqn:TypeII_before_reduction} into
\begin{equation}\label{eqn:Mar_31_01}
    \{(s_1,(1+s_2/2)\psi(t),(1+s_2/2)t,x(s)):s\in [-1,1]^2,t\in [-1,1]^l\}.
\end{equation}
The next step is to partition $[-1,1]^{2+l}$ into smaller cubes of side length $c$, where $c=c(\psi)$ is a small enough constant to be determined. We can apply a trivial decoupling to locate to each such cube $S_0\times T_0$. For the $s$ variable, we can assume without loss of generality that $S_0$ is centred at $0$, and then rescale $S_0$ back to $[-1,1]^2$. Thus, renaming $x$, we can equivalently write \eqref{eqn:Mar_31_01} into
\begin{equation}\label{eqn:Mar_31_02}
    \{(s_1,(1+cs_2)\psi(t),(1+cs_2)t,x(s)):s\in [-1,1]^2,t\in T_0\}.
\end{equation}
Denote by $t_0$ the centre of $T_0$. By a translation $t\mapsto t+t_0$ and the change of variables
\begin{equation*}
    -Q_0(t):=\psi(t+t_0)-\psi(t_0)-\nabla \psi(t_0)\cdot t,
\end{equation*}
we can write \eqref{eqn:Mar_31_02} into
\begin{equation*}
    \{(s_1,(1+cs_2)(-Q_0(t)+\psi(t_0)+\nabla \psi(t_0)\cdot t,(1+cs_2)(t+t_0),x(s)):s\in [-1,1]^2,t\in [-c,c]^l\},
\end{equation*}
where $Q_0(0)=0$, $\nabla Q(0)=0$ and $\det D^2 Q\ne 0$. We then apply a few affine equivalences to all but the last coordinate:
\begin{equation*}
\begin{aligned}
    &(s_1,(1+cs_2)(-Q_0(t)+L\psi(t_0)),(1+cs_2)(t+t_0))\\
    \iff &\left(s_1,(1+cs_2)(-Q_0(t)+L\psi(t_0)),(1+cs_2)\left(t+\frac{t_0Q_0(t)}{L\psi(t_0)} \right)\right)\\
    \iff &\left(s_1,-(1+cs_2)Q_0(t)+cL\psi(t_0)s_2,(1+cs_2)\left(t+\frac{t_0Q_0(t)}{L\psi(t_0)} \right)\right),
\end{aligned}    
\end{equation*}
where $s\in [-1,1]^2$, $t\in [-c,c]^l$. Rescaling $t$ back to $[-1,1]^l$ gives
\begin{equation*}
    \left(s_1,-(1+cs_2)Q_0(ct)+cL\psi(t_0)s_2,(1+cs_2)\left(ct+\frac{t_0Q_0(ct)}{L\psi(t_0)} \right)\right),
\end{equation*}
which is further equivalent to
\begin{equation*}
    \left(s_1,s_2-(1+cs_2)Q(t),(1+cs_2)\left(t+t_0 Q(t) \right)\right),
\end{equation*}
where we have set 
\begin{equation}\label{eqn:defn_Q_from_Q0}
    Q(t):=\frac{Q_0(ct)}{cL\psi(t_0)},
\end{equation}
so that $\norm{Q}_{C^2}\sim c$.

Lastly, by Taylor expansion as in \cite[Section 2]{LiYang2023}, we may assume $x\in \mathcal P_{2,d}$. 
It then suffices to prove decoupling for the following surface
\begin{equation}\label{eqn:Mar_31_03}
    \{\left(s_1,s_2-(1+cs_2)Q(t),(1+cs_2)\left(t+t_0 Q(t) \right),x(s)\right):(s,t)\in [-1,1]^{2+l}\},
\end{equation}
with the decoupling constants independent of the coefficients of $x$.

\subsection{Families of surfaces and almost parallelograms}
We now proceed to define a family $\mathcal M$ of surfaces so that \eqref{eqn:Mar_31_03} embeds into $\mathcal M$.

\subsubsection{Family $\mathcal Q$}
Fix $c\sim 1$ depending only on $\psi$. For suitable constants $c_1,c_2$ depending on $c$, we define a family $\mathcal Q=\mathcal Q(c_1,c_2)$ of functions satisfying
\begin{equation}\label{eqn:Q_small}
    \begin{aligned}
        &Q(0)=0,\,\nabla Q(0)=0,\\        
&\norm{Q(t)}_{C^2([-1,1]^l)}\le c_2,\\
        &\inf_{t\in [-1,1]^l}|\det D^2 Q(t)|\ge c_1,
    \end{aligned}
\end{equation}
such that the function $Q$ as in \eqref{eqn:defn_Q_from_Q0} lies in $\mathcal Q$.

\fbox{Notation} In the remainder of this section, we omit the dependence of constants on $d,l$ (as well as $c,c_1,c_2$).

\subsubsection{Family $\mathcal A$}

For $a=(a_0,a_1,a_2)\in \R^3$ and $s=(s_1,s_2)\in [-1,1]^2$, denote
\begin{equation*}
    A_a(s):=a_0+a_1s_1+a_2s_2,
\end{equation*}
and with this, we denote
\begin{equation}\label{eqn:family_A}
    \mathcal A:=\{A=A_a: (a_0,a_1,a_2)\in \R^3, \max_{s\in [-1,1]^2}|A_a(s)-1|\le c\}.
\end{equation}
That is, $\mathcal A$ consists of affine functions $A$ mapping $[-1,1]^2$ into $[1-c,1+c]$.

\subsubsection{Parametrisation of coordinate space}

For $A\in \mathcal A$, $Q\in \mathcal Q$ and $u\in [-1,1]^l$, define the function $\bar\Phi=\bar\Phi_{A,Q,u}$ by
\begin{equation}\label{eqn:Phi_diffeomorphism}
    \bar\Phi(s_1,s_2,t):=(s_1,s_2-A(s)Q(t), A(s)(t+uQ(t))).   
\end{equation}
It is then easy to see that the Jacobian matrix $D\bar \Phi$ is within $O(c)$ from the identity matrix; so if $c$ is small enough, then $D\bar \Phi$ is invertible.

\subsubsection{Family of surfaces}
Given $\bar \Phi_{A,Q,u}$. For $x\in \mathcal P_{2,d}$, we define $\Phi=\Phi_{A,Q,u,x}$ by
\begin{equation}\label{eqn:defn_M}
\Phi:=\left\{\begin{bmatrix}
    s_1\\
    s_2-A(s)Q(t)\\    
    A(s)(t+u Q(t))\\
    x(s_1,s_2)
\end{bmatrix}:(s,t)\in [-1,1]^{2+l}\right\},
\end{equation}
so that we have taken $U_\Phi=[-1,1]^{2+l}$ for all $\Phi$. We thus define
\begin{equation}\label{eqn:defn_family_M}
    \mathcal M:=\{\Phi_{A,Q,u,x}:A\in \mathcal A,Q\in \mathcal Q,u\in [-1,1]^l,x\in \mathcal P_{2,d}\}.
\end{equation}
In general, parallelograms in the coordinate space cannot be easily parametrised by $(s,t)$ also lying in a parallelogram, and hence the effects of rescaling of these parallelograms are difficult to compute. Instead, we will work on the rescaling in the parameter space and prove that this diffeomorphism induces an affine transformation for a specific family $\mathcal{R}_\Phi$ of almost parallelograms.

\subsubsection{Family $\mathcal S$ of planar parallelograms}
It is easy to see that every almost parallelogram $S\sub [-1,1]^2$ is equivalent to a vertical neighbourhood, namely, of the form \eqref{eqn:Omega_0}. We thus define the family $\mathcal S$ consisting of all images of affine transformations $\lambda$ over $[-1,1]^2$, where
\begin{equation}\label{eqn:affine_lambda_S}
    \lambda\begin{bmatrix}
        s_1 \\
        s_2
    \end{bmatrix}
    :=
    \begin{bmatrix}
        m_{11} & 0\\
        m_{21} & \eta
    \end{bmatrix}
    \begin{bmatrix}
        s_1\\ s_2
    \end{bmatrix}
    +\begin{bmatrix}
        s_{1,0} \\ s_{2,0}
    \end{bmatrix},
\end{equation}
where $m_{11}\in (0,1]$, $m_{21}\in [-1,1]$, $\eta\in (0,1]$, and $s_0=(s_{1,0},s_{2,0})\in [-1,1]^2$ is the centre of $S$. Note that all $S\in \mathcal S$ are essentially contained in $[-1,1]^2$. 

We sometimes denote $\eta=\eta_S$.

\subsubsection{Families $\mathcal R_\Phi$ of almost parallelograms}
We now define for each $\Phi\in \mathcal M$ a family $\mathcal R_\Phi$ of parallelograms associated to $\bar\Phi=\bar\Phi_{a,Q,u}$:
\begin{equation}\label{eqn:defn_family_R}
\begin{aligned}
    \mathcal R_\Phi&:=\{\bar\Phi(S\times T):S\in \mathcal S,\,
    T\sub [-1,1]^l\text{ is a square, } l(T)^2=\eta_S\}.
\end{aligned}
    \end{equation}
Note that as long as $l(T)^2\lesssim \eta_S$, $\bar\Phi(S\times T)$ is an almost parallelogram contained in $[-1,1]^{2+l}$.

We denote the family of all interesting collections $\mathcal R_\Phi$ of parallelograms by $\mathfrak R$, namely,
\begin{equation}\label{eqn:mathfrak_R}
    \mathfrak R:=\{\mathcal R_\Phi:\Phi\in \mathcal M\}.
\end{equation}

\subsubsection{Family $\mathcal X$ of diffeomorphisms}
We define a family $\mathcal X$ of affine bijections on $\R^{2+l}$ by 
\begin{equation}\label{eqn:defn:mathcal_X}
    \mathcal X:=\{\Xi:\Xi(s,t)=(\lambda_S(s),\lambda_T(t)),S\in \mathcal S,T \text{ is a square, } l(T)^2=\eta_S\}.
\end{equation}

This finishes the setup of the collections $\mathcal M,\mathfrak R,\mathcal X$. 

\section{Rescaling invariance}\label{sec:rescaling}

In this section, we prove the trivial covering property as in Definition \ref{defn:trivial_covering}, and the rescaling invariance as in Definition \ref{defn:rescaling_invariant_pair}, for the families $\mathcal M,\mathfrak R,\mathcal X$ defined in Section \ref{sec:family_surfaces}.

\fbox{Notation} In this section, we omit the dependence of constants on $d,l$.

\subsection{Trivial covering property}
We first deal with the trivial covering property as in Definition \ref{defn:trivial_covering}. Note now $k=2+l$. 

Indeed, since given any $\Phi\in \mathcal M$ and any $K\ge 1$, we may partition $[-1,1]^{2+l}$ into $\sim K^{2+l}$ many smaller squares $S$ of diameter $\ll K^{-1}$, such that each $\bar \Phi(S)$ has diameter $\le K^{-1}$. Then it is easy to see that these $\bar \Phi(S)$ cover $\bar\Phi([-1,1]^{2+l})$.

\subsection{Rescaling invariance}

Now we come to the main proposition of this section.
\begin{prop}\label{prop:rescaling_invariance}
    $(\mathcal M,\mathfrak R)$ is $\mathcal X$-rescaling invariant as in Definition \ref{defn:rescaling_invariant_pair}.
\end{prop}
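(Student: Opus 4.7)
The plan is to verify both conditions in Definition \ref{defn:rescaling_invariant_pair} by an explicit construction of $(\Psi, R_\Psi, \Xi)$ and an accompanying affine bijection $\Lambda$, and then direct verification.

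Given $\Phi = \Phi_{A,Q,u,x} \in \mathcal{M}$ and a $(\Phi,\sigma)$-flat parallelogram $R_\Phi = \bar\Phi(S \times T) \in \mathcal{R}_\Phi$ with $S = \lambda_S([-1,1]^2)$ and $T$ a cube of side $\mu := \sqrt{\eta_S}$ centred at some $\tau_0 \in [-1,1]^l$, take $\Xi := (\lambda_S, \lambda_T) \in \mathcal{X}$. The target $\Psi = \Phi_{A'', Q'', u'', x''}$ is defined by:
\begin{itemize}
    \item $Q''(\tilde t) := \mu^{-2}\bigl[Q(\lambda_T(\tilde t)) - Q(\tau_0) - \mu \nabla Q(\tau_0) \cdot \tilde t\bigr]$; this lies in $\mathcal{Q}$ since $Q''(0) = 0$, $\nabla Q''(0) = 0$, and $D^2 Q''(\tilde t) = D^2 Q(\lambda_T(\tilde t))$, which keeps the $\det$-bound.
    \item $A'' := \kappa \cdot (A \circ \lambda_S)$ for a scalar $\kappa \approx 1$ fixed by the consistency equation below, so that $A'' \in \mathcal{A}$.
    \item $u'' := B^{-1}\bigl(\mu u + (\kappa a'_2/\mu)\alpha\bigr)$, with $B := I + u \nabla Q(\tau_0)^T$, $\alpha := \tau_0 + u Q(\tau_0)$, and $a'_2 = a_2 \eta_S$ the $\tilde s_2$-coefficient of $A \circ \lambda_S$; the smallness of $a'_2$ gives $|u''| \lesssim \mu \leq 1$, so $u'' \in [-1,1]^l$.
    \item $x''(\tilde s) := \sigma^{-1}(x(\lambda_S(\tilde s)) - \widetilde L(\tilde s))$, where $\widetilde L$ is the affine-in-$\tilde s$ approximation of $x \circ \lambda_S$ supplied by the $(\Phi,\sigma)$-flatness hypothesis evaluated at $\tilde t = \tau_0$; the flatness ensures $|x''| \lesssim 1$ on $[-1,1]^2$, putting $x'' \in \mathcal{P}_{2,d}$ up to a fixed constant.
\end{itemize}
The affine bijection $\Lambda$ on $\R^n$ is then assembled by matching, term by term in $(\tilde s_1, \tilde s_2, \tilde t, Q''(\tilde t))$, the first $n-1$ coordinates of both sides of $\bar\Lambda \circ \bar\Psi = \bar\Phi \circ \Xi$; the height function $h$ of $\Lambda$ equals $\widetilde L$ expressed as an affine function of $\bar\Psi_1, \bar\Psi_2$, closing the equation for the last coordinate. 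For the second requirement of Definition \ref{defn:rescaling_invariant_pair}, any sub-parallelogram $R'_\Psi = \bar\Psi(S' \times T') \subseteq R_\Psi$ is sent by $\bar\Lambda$ to $\bar\Phi(\lambda_S(S') \times \lambda_T(T'))$; closure of the lower-triangular form \eqref{eqn:affine_lambda_S} under composition gives $\lambda_S(S') \in \mathcal{S}$, and $\lambda_T(T')$ is a cube of side $\sqrt{\eta_{\lambda_S(S')}}$, hence the image lies in $\mathcal{R}_\Phi$.

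The main obstacle is the algebraic consistency of $u''$ between the matching equations for the second and third coordinates of $\bar\Lambda(\bar\Psi) = \bar\Phi \circ \Xi$. Since $\tau_0$ is a generic point, $\nabla Q(\tau_0) \neq 0$ and the rank-one perturbation $B - I = u \nabla Q(\tau_0)^T$ distorts the linear-in-$\tilde t$ block of $\bar\Phi_3 \circ \Xi$, while in the second coordinate the $\tilde t$-linear part contributes a term $-\mu A'(\tilde s)\nabla Q(\tau_0) \cdot \tilde t$. Equating the two implied expressions for $u''$ reduces, after using the Sherman--Morrison formula for $B^{-1}$, to the single scalar equation $\kappa\bigl[1 + \nabla Q(\tau_0) \cdot u - a_2 LQ(\tau_0)\bigr] = 1$. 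The smallness of $\|Q\|_{C^2}$ and $\|A - 1\|_\infty$ (controlled by the constants $c$, $c_1$, $c_2$ in \eqref{eqn:family_A} and \eqref{eqn:Q_small}) guarantees $\kappa \approx 1$, so that $A'' \in \mathcal{A}$ and the construction closes.
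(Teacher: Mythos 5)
Your construction mirrors the paper's explicit approach: same $\Xi$, same Taylor-remainder $Q''$, same Sherman--Morrison style matrix $B=J_0^{-1}$ in the definition of $u''$, and the same closure argument for Part~(2) of Definition~\ref{defn:rescaling_invariant_pair}. The paper routes the proof through the slightly more general Lemma~\ref{lem:rescaling_invariance} (with the extra parameters $v,w$) so it can be reused in Section~\ref{sec:nondegenerate_case}, but for Proposition~\ref{prop:rescaling_invariance} alone the direct route you take is fine.

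There is, however, a real gap in your treatment of the last coordinate. You define $x''(\tilde s)=\sigma^{-1}\bigl(x(\lambda_S\tilde s)-\widetilde L(\tilde s)\bigr)$ with $\widetilde L(\tilde s)=\alpha_0+\alpha_1\tilde s_1+\alpha_2\tilde s_2$ the full affine approximation from flatness at $\tilde t=0$, and you then assert that the height function of $\Lambda$ is ``$\widetilde L$ expressed as an affine function of $\bar\Psi_1,\bar\Psi_2$.'' This cannot work as stated: $\bar\Psi_2=\tilde s_2-A''(\tilde s)Q''(\tilde t)$ is genuinely $\tilde t$-dependent, so writing $\tilde s_2=\bar\Psi_2+A''(\tilde s)Q''(\tilde t)$ leaves the uncancelled term $\alpha_2\,A''(\tilde s)Q''(\tilde t)$, which is neither affine in $\bar\Psi$ nor absorbable into a $\Psi_n$ that, by the definition of $\mathcal M$, must be a polynomial in $\tilde s$ only. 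Matching the $\tilde t$-dependence forces the $\bar\Psi_2$- and $\bar\Psi_3$-coefficients of the height function to vanish, so in fact $h$ can only depend on $\bar\Psi_1=\tilde s_1$, and the correct definition is $x''(\tilde s)=\sigma^{-1}\bigl(x(\lambda_S\tilde s)-\alpha_0-\alpha_1\tilde s_1\bigr)$ — \emph{without} subtracting the $\alpha_2\tilde s_2$ term. This only lands in $\mathcal P_{2,d}$ if one proves the nontrivial bound $|\alpha_2|\lesssim\sigma$, which you nowhere establish; it is precisely where the nondegeneracy $|\det D^2 Q|\gtrsim 1$ enters, via the estimate $|(\alpha_2-\alpha\cdot u'')Q''(\tilde t)-\alpha\cdot\tilde t|\lesssim\sigma$ on $[-1,1]^l$ and Lemma~\ref{lem:Hessian_bounded_below}. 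In the paper this is the step after \eqref{eqn:flatness_t=0}, leading to the bound on $\tilde w$; in your formulation it must appear as a bound on the $\tilde s_2$-coefficient of $\widetilde L$. Without it, you cannot show $\Psi\in\mathcal M$, and the induction does not close. (A small additional remark: your aside that ``$\tau_0$ is generic so $\nabla Q(\tau_0)\neq 0$'' is both false in general and unnecessary — when $\nabla Q(\tau_0)=0$ one simply has $B=I$ and everything goes through.)
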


To prove the main proposition, we prove a key lemma first. For future use, we need to consider a more general setting.

\subsubsection{Enlarged families}
Fix $v,w\in [-1,1]$. Slightly abusing notation of $\Phi$ and $\bar \Phi$, we consider even more general surfaces 
\begin{equation}\label{eqn:general_M}
    \Phi_{A,Q,u,x,v,w}:=\left\{(\bar \Phi(s,t),x(s)+wA(s)Q(t)):(s,t)\in [-1,1]^{2+l}\right\},
\end{equation}
where $\bar \Phi=\bar\Phi_{A,Q,u,v}$ is defined by
\begin{equation}\label{eqn:general_Phi}
    \bar\Phi(s_1,s_2,t):=(s_1,s_2-vA(s)Q(t), A(s)(t+uQ(t))).
\end{equation}
Note that \eqref{eqn:general_M} and \eqref{eqn:general_Phi} reduce to \eqref{eqn:defn_M} and \eqref{eqn:Phi_diffeomorphism}, respectively, when $v=1$ and $w=0$. Also, $D\bar\Phi_{A,Q,u,v}$ is invertible. Similar to \eqref{eqn:defn_family_R}, we also define for each more general $\Phi=\Phi_{A,Q,u,x,v,w}$ the following collection of almost parallelograms:
\begin{equation}\label{eqn:defn_family_R_leq}
\begin{aligned}
    \mathcal R'_{\Phi}:=\{\bar\Phi(S\times T):S\in \mathcal S,\,
    T\sub [-1,1]^l\text{ is a square, } |v|l(T)^2\le \eta_S\}.
\end{aligned}
    \end{equation}
Note that for $\Phi_{A,Q,u,x}=\Phi_{A,Q,u,x,0,0}$, we have $\mathcal R'_{\Phi}\supseteq \mathcal R_\Phi$ because of the $\le $ sign in \eqref{eqn:defn_family_R_leq}.

Now we introduce the following main lemma, which directly implies Proposition \ref{prop:rescaling_invariance} by taking $v=1$, $w=0$. We choose to prove this more general lemma since it will also be used in Section \ref{sec:nondegenerate_case}.

\begin{lem}\label{lem:rescaling_invariance}
    Fix $A\in \mathcal A$, $Q\in \mathcal Q$, $u\in [-1,1]^l$, $x\in \mathcal P_{2,d}$, $v,w\in [-1,1]$. Given a parallelogram $S\in \mathcal S$ and a square $T\sub [-1,1]^l$ such that $\bar\Phi_{A,Q,u,v}(S\times T)\in \mathcal R'_{\Phi}$.
    
    Let $\sigma\in (0,1]$, and assume that there exist scalars $\alpha_0,\alpha_1,\alpha_2$ and a vector $\alpha\in \R^{l}$ such that
    \begin{equation}\label{eqn:sigma_flatness_rescaling_invariance}
    \begin{aligned}
        &\sup_{s\in S, t\in T}|x(s_1,s_2)+wA(s)Q(t)\\
        &-\alpha_0-\alpha_1 s_1-\alpha_2 (s_2-vA(s)Q(t))-\alpha\cdot A(s)(t+uQ(t))|\le \sigma.
    \end{aligned}        
    \end{equation}
    Then there exist affine bijections $\Xi:[-1,1]^{2+l}\to S\times T$ and $\Lambda:\R^{3+l}\to \R^{3+l}$ satisfying
    \begin{equation}\label{eqn:Oct_08_01}
        \Lambda(s,t,y)-\Lambda(s,t,0)=(0,0,y),\quad \forall (s,t,y) \in \R^{3+l},
    \end{equation}
    and there exist $\tilde A\in \mathcal A$, $\tilde Q\in \mathcal Q$, $\tilde u\in [-1,1]^l$, $\tilde x\in \mathcal P_{2,d}$, $\tilde v,\tilde w\in [-1,1]$, such that
    \begin{equation}\label{eqn:Oct_08_02}
        \Lambda\circ  \tilde\Phi\equiv  \Phi\circ \Xi,
    \end{equation}
    where 
    \begin{equation*}
        \tilde \Phi(s,t):=(\bar\Phi_{\tilde A,\tilde Q,\tilde u,\tilde v}(s,t),\sigma\tilde x(s)+\sigma\tilde w \tilde A(s)\tilde Q(t)).
    \end{equation*}
    
    Moreover, we have the following special cases:
    \begin{itemize}
        \item If $\eta_S=l(T)^2$, $v=1$ and $w=0$, then we can guarantee that $\tilde v=1$ and $\tilde w=0$. This means that if $\bar \Phi_{A,Q,u}(S\times T)\in \mathcal R_\Phi$, then $\tilde \Phi\in \mathcal M$. (This is used in the proof of Proposition \ref{prop:rescaling_invariance}.)
        \item If $\sigma=l(T)^2$, $v=w=1$, and $\partial_2 x(s_0)=0$ (where $s_0$ is the centre of $S$), then we have $\tilde w=1$, and the following relation holds:
        \begin{equation}\label{eqn:Oct_14_10}
            \sup_{s\in [-1,1]^2}|\tilde v \partial_2 \tilde x(s)|= \sup_{s\in S}|x(s)-x(s_0)|.
        \end{equation}
        This will be used in the proof of Theorem \ref{thm:nondegenerate_decoupling_PS}.
    \end{itemize}
    
\end{lem}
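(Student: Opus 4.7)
The strategy is to explicitly construct $\Xi,\Lambda,\tilde A,\tilde Q,\tilde u,\tilde v,\tilde x,\tilde w$ and verify both the identity $\Lambda\circ\tilde\Phi\equiv\Phi\circ\Xi$ and the size constraints. Take $\Xi(s,t):=(\lambda_S(s),\lambda_T(t))$, where $\lambda_S:[-1,1]^2\to S$ is the affine parametrization from \eqref{eqn:affine_lambda_S} and $\lambda_T:[-1,1]^l\to T$ is the similarity sending $[-1,1]^l$ onto $T$; then $\Xi\in\mathcal X$. Write $s_0:=\lambda_S(0)$, $t_0:=\lambda_T(0)$, $a(s):=A(\lambda_S(s))$ (affine in $s$ with $|a-1|\le c$). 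By Taylor's formula,
\begin{equation*}
Q(\lambda_T(t))=Q(t_0)+l(T)\nabla Q(t_0)\cdot t+l(T)^2\tilde Q(t),
\end{equation*}
where $\tilde Q(t):=l(T)^{-2}[Q(\lambda_T(t))-Q(t_0)-l(T)\nabla Q(t_0)\cdot t]$ satisfies $\tilde Q(0)=0$, $\nabla\tilde Q(0)=0$, $D^2\tilde Q=D^2Q\circ\lambda_T$, so $\tilde Q\in\mathcal Q$. Set $\tilde A:=a\in\mathcal A$.

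Next we match base coordinates by requiring $L_1\bar{\tilde\Phi}+b_1=\bar\Phi\circ\Xi$. After substituting the Taylor expansion, every term on either side is of one of the six types $1,\,s_i,\,t_i,\,\tilde Q(t),\,s_it_j,\,s_i\tilde Q(t)$, and matching coefficients type-by-type determines all entries of $L_1,b_1$ together with $\tilde u,\tilde v$. The constant, pure-$s$ and pure-$t$ matches are immediate. The only coupled equations come from matching the $\tilde Q(t)$-coefficient in row $2$ and the $t$-coefficient in rows $3,\dots,2+l$; these form an $(l+1)\times(l+1)$ linear system for $(\tilde v,\tilde u)$ whose leading-order solution is $\tilde v=v\, l(T)^2/\eta$, $\tilde u=l(T)u$. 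Under the hypothesis $|v|l(T)^2\le\eta$ and the smallness of $c,c_1,c_2$, Neumann series / fixed-point iteration yields a unique solution with $|\tilde v|,|\tilde u_i|\le 1$; the remaining $s_it_j$ and $s_i\tilde Q(t)$ matches are then automatic because $a(s)$ factors uniformly through every equation. The matrix $L_1$ is essentially $\mathrm{diag}(m_{11},\eta,l(T)I_l)$ plus small perturbations, hence invertible.

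For the height, choose $\beta:=w\, l(T)\nabla Q(t_0)$ to cancel the bilinear $a(s)t$ contribution in $\mathrm{height}-\mathrm{affine}(\bar{\tilde\Phi})$; absorbing the affine-in-$s$ part via $\beta_0,\beta_1,\beta_2$ then reduces the last-coordinate identity to
\begin{equation*}
\sigma\tilde x(s)=x(\lambda_S(s))+w\,a(s)Q(t_0)-\beta_0-\beta_1s_1-\beta_2s_2,\qquad \sigma\tilde w=w\, l(T)^2+\beta_2\tilde v-w\, l(T)^2\nabla Q(t_0)\cdot\tilde u.
\end{equation*}
The flatness hypothesis \eqref{eqn:sigma_flatness_rescaling_invariance}, pulled back by $\Xi$ and rewritten using $(\beta_1,\beta_2,\beta)=(\alpha_1,\alpha_2,\alpha)L_1$, is exactly the statement $|\sigma\tilde x(s)+\sigma\tilde w\,a(s)\tilde Q(t)|\le\sigma$ uniformly. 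Setting $t=0$ together with $\tilde Q(0)=0$ isolates $\sigma\tilde x(s)$ and gives $\|\tilde x\|_\infty\le 1$, so $\tilde x\in\mathcal P_{2,d}$; evaluating the difference between values at some $t$ with $|\tilde Q(t)|\sim 1$ and at $t=0$ recovers $|\tilde w|\le 1$.

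For the two special cases one specialises the above. When $w=0$ the height is $t$-independent, so matching the $t$-dependent parts of the affine-of-$\bar{\tilde\Phi}$ forces $\beta=0$ and $\beta_2=0$; hence $\tilde w=0$. The hypothesis $\eta_S=l(T)^2,\,v=1$ reduces the $\tilde v$-equation to a small perturbation of $\tilde v=1$, and a compensating rescaling $\tilde Q\to\tilde v_{\mathrm{old}}\tilde Q,\ \tilde u\to\tilde u_{\mathrm{old}}/\tilde v_{\mathrm{old}}$ (which preserves $\tilde A\tilde Q$ and $\tilde A\tilde u\tilde Q$ and keeps $\tilde Q\in\mathcal Q$) promotes $\tilde v$ to exactly $1$. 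For the second special case, $\sigma=l(T)^2$ renders $\sigma\tilde w=l(T)^2+\beta_2\tilde v-l(T)^2\nabla Q(t_0)\cdot\tilde u$, and we choose $\beta_2$ to force $\tilde w=1$ exactly; the hypothesis $\partial_2 x(s_0)=0$ is precisely what ensures that, after this forced shift of $\beta_2$, the resulting $\tilde x$ still fits in $\mathcal P_{2,d}$. Identity \eqref{eqn:Oct_14_10} follows by differentiating the formula for $\sigma\tilde x$ in $s_2$ and using $\tilde v\sim l(T)^2/\eta$ together with $\partial\lambda_S/\partial s_2=(0,\eta)$. The main technical obstacle is the apparent $l(T)^{-1}$ factor appearing in the equation for $\tilde u_{j'}$: its dangerous coefficient is cancelled by the accompanying $\tilde v\sim v\, l(T)^2/\eta$, inherited from $|v|l(T)^2\le\eta$, so that all quantities remain $O(1)$; the rest of the size checks are routine perturbation arguments.
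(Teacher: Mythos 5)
Your construction (explicit $\Xi,\tilde A,\tilde Q$, coefficient matching for $\tilde v,\tilde u$, and using flatness at $t=0$ to bound $\tilde x$) is the same route the paper takes, and those parts are sound. However, there are two gaps.

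\textbf{The first special case is incorrect as argued.} You claim that when $w=0$, ``matching the $t$-dependent parts of the affine-of-$\bar{\tilde\Phi}$ forces $\beta=0$ and $\beta_2=0$; hence $\tilde w=0$.'' But if you impose $\beta_2=0$ you no longer absorb the $s_2$-linear part of the pulled-back height, and in general $\tilde x$ falls out of $\mathcal P_{2,d}$. Concretely take $x(s)=s_2$ (so $\partial_2 x\equiv 1$), $\eta_S=\tau^2$ with $\tau^2\gg\sigma$; then $x\circ\lambda_S(s)=m_{21}s_1+\eta s_2+s_{2,0}$, and with $\beta_2=0$ one gets $\sigma\tilde x(s)=\eta s_2$, i.e.\ $\|\tilde x\|_\infty=\eta/\sigma\gg 1$. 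So $\beta_2$ cannot be set to zero; it must be taken to be the $s_2$-Taylor coefficient of $x\circ\lambda_S+wA(\lambda_S\cdot)Q(t_0)$, which makes $\tilde w=\sigma^{-1}(\alpha_2\tilde v+w\tau^2-\cdots)$ generically nonzero even when $w=0$. The paper's argument is different: first derive $\tilde v=v\tau^2\eta^{-1}=1$ from the hypotheses $v=1$, $\eta_S=l(T)^2$; then allow $\tilde w\neq 0$; and finally perform one more affine transformation $\Lambda$ that subtracts $\sigma\tilde w$ times the \emph{second base coordinate} $s_2-\tilde A(s)\tilde Q(t)$ of $\tilde\Phi$ from the last coordinate. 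Since $\tilde v=1$, this kills the $\tilde A\tilde Q$-term exactly, turning $\tilde x$ into $\tilde x+\tilde w\,s_2$ (still in $\mathcal P_{2,d}$ up to harmless constants) and producing $\tilde w=0$. This final affine step (and the fact that it is only available when $\tilde v\sim 1$) is the content you are missing.

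\textbf{The size bound $|\tilde w|\lesssim 1$ is under-justified.} You assert that after absorbing affine terms, the rewritten flatness is exactly $|\sigma\tilde x(s)+\sigma\tilde w\,a(s)\tilde Q(t)|\le\sigma$, and then evaluate at a single $t$ with $|\tilde Q(t)|\sim 1$. But the affine coefficient $\beta=w\,l(T)\nabla Q(t_0)$ you must use to eliminate the bilinear $a(s)t$ contribution is in general not the same as the vector coefficient $\alpha$ supplied by the flatness hypothesis, so the rewritten flatness carries an extra residual $\gamma\cdot\tilde A(s)t$ with $\gamma$ a priori not small. One cannot rule out cancellation between $\gamma\cdot t$ and $\tilde w\tilde A(s)\tilde Q(t)$ by evaluating at a single point. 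The paper handles this by invoking a quantitative Hessian lower-bound lemma (Lemma~\ref{lem:Hessian_bounded_below}) to separate the genuinely quadratic $\tilde Q$-contribution from the linear $t$-contribution; that (or an equivalent second-derivative argument) is needed to conclude $|\tilde w|\lesssim 1$.

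The remaining parts of your proposal --- the Taylor expansion of $Q$ defining $\tilde Q$, the Neumann-series solution for $(\tilde v,\tilde u)$ (the paper's $J_0$-matrix inversion), and the derivation of \eqref{eqn:Oct_14_10} by differentiating $\sigma\tilde x$ in $s_2$ using $\alpha_2=\eta\,\partial_2 x(s_0)=0$ --- match the paper's computations and are correct up to the usual trivial-partition bookkeeping needed to pass from $\lesssim 1$ to $\le 1$.
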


\begin{proof}[Proof of Proposition \ref{prop:rescaling_invariance} assuming Lemma \ref{lem:rescaling_invariance}]
Part \eqref{item:Mar_25_rescaling_invariant_01} of Definition \ref{defn:rescaling_invariant_pair} follows directly from the first special case of Lemma \ref{lem:rescaling_invariance} mentioned above. To prove Part \eqref{item:Mar_25_rescaling_invariant_02} of Definition \ref{defn:rescaling_invariant_pair}, we need to prove that
\begin{equation*}
    \bar \Phi\circ \Xi\circ \bar \Psi^{-1}(R'_\Psi)\in \mathcal R_\Phi.
\end{equation*}
But this is immediate if one unravels the notation of \eqref{eqn:defn_family_R} and \eqref{eqn:defn:mathcal_X}.
\end{proof}

\subsection{Proof of Lemma \ref{lem:rescaling_invariance}}

\subsubsection{Rescaling in parameter space}\label{sec:rescale_ST}
Fix any $S$ and any $T$ satisfying the conditions of the lemma.

\fbox{Defining $\Xi$}

Let $\lambda:[-1,1]^2\to S$ be an affine bijection of the form \eqref{eqn:affine_lambda_S}, and define $\Xi$ by
\begin{equation*}
    \Xi (s,t):=(\lambda (s), \tau t+t_0)
\end{equation*}
where $\tau=l(T)/2$ and $t_0$ is the centre of $T$. Thus, $\Xi$ is an affine bijection from $[-1,1]^{2+l}$ to $S\times T$. We also record that $|v|\tau^2\le \eta=\eta_S$.

\fbox{Defining $\tilde A$} 

We define $\tilde A(s):=A(\lambda s)$, which lies in $\mathcal A$ (recall \eqref{eqn:family_A}).

\fbox{Defining $\tilde Q$}

We define
\begin{equation}\label{eqn:Q_tilde}
        \tilde Q(t):=\tau^{-2}\left(Q(\tau t+t_0)-Q(t_0)-\nabla Q(t_0)\cdot \tau t_0\right),
    \end{equation}
    which one can check belongs to $\mathcal Q$. Using this and affine invariance, we can rewrite the surface $\Phi$ in \eqref{eqn:general_M} as
    \begin{equation*}
    \begin{bmatrix}
        s_1\\
        \eta s_2-v\tilde A(s)\left(Q(t_0)+\nabla Q(t_0)\cdot \tau t+\tau^2 \tilde Q(t)\right )\\ \tilde A(s)\left(\tau t+t_0+u(Q(t_0)+\nabla Q(t_0)\cdot \tau t+\tau^2 \tilde Q(t) )\right)\\
        x(\lambda s)+w\tilde A(s)\left(Q(t_0)+\nabla Q(t_0)\cdot \tau t+\tau^2 \tilde Q(t)\right)
    \end{bmatrix}.
    \end{equation*}
    We refer to the above coordinates of $\Phi$ as the first, second, vector, and last coordinates, respectively.
    
    \subsubsection{Simplifying vector coordinate, Part I}
    
    We first pay attention to the vector coordinate, which can be rewritten as (regarding $t,t_0,u$ as column vectors)
    \begin{equation*}
        \tilde A(s)\left(\tau J_0^{-1} t
        +t_0+u (Q(t_0)+\tau^2 \tilde Q(t))
        \right),
    \end{equation*}
    where
    \begin{equation*}
        J_0^{-1}:=\begin{bmatrix}
            1+u_1 \partial_1 Q(t_0) & \cdots & u_1 \partial_l Q(t_0)\\
            \vdots & \ddots & \vdots\\
            u_l \partial_1 Q(t_0) & \cdots & 1+ u_l \partial_l Q(t_0)
        \end{bmatrix}
    \end{equation*}
    is a constant invertible matrix. We then apply $J_0$ to the vector coordinate to transform this to
    \begin{equation}\label{eqn:Sep_18_01}
        \tilde A(s)\left(\tau t+J_0t_0+(J_0 u) (Q(t_0)+\tau^2 \tilde Q(t))\right).
    \end{equation}
    
    \subsubsection{Simplifying second coordinate}
    
    We now apply affine invariance to eliminate the affine terms of the last coordinate. We first use \eqref{eqn:Sep_18_01} and an affine transformation to eliminate the first order terms in $t$:
    \begin{equation}\label{eqn:Sep_18_02}
        \eta s_2-v\tilde A(s)(b'+b''\tau^2 \tilde Q(t)),
    \end{equation}
    where
    \begin{align*}
        b'&:=Q(t_0)-(J_0 t_0+Q(t_0)J_0 u)\cdot\nabla Q(t_0),\\
        b''&:=1-J_0u \cdot\nabla Q(t_0),
    \end{align*}    
    which can be further rearranged as
    \begin{align*}
        &\eta s_2 - v (\tilde{a}_0 + \tilde{a}_1s_1 + \tilde{a}_2s_2(s)b') - b''\tau^2 \tilde{Q}(t) \\
        & = \eta(1-\tilde{a}_2vb') s_2 -vb''\tau^2 \tilde A(s) \tilde Q(t) - v (\tilde{a}_0 + \tilde{a}_1s_1)\\
        & = \eta(1-\tilde{a}_2vb') \left ( s_2 -\frac{v b''\tau^2 }{\eta(1-\tilde{a}_2vb')}\tilde A(s)\tilde Q(t) \right) - v (\tilde{a}_0 + \tilde{a}_1s_1).
    \end{align*}

    Thus, by an affine transformation (including rescaling), \eqref{eqn:Sep_18_02} is changed to
    $$
         s_2-\bar v \tilde A(s)\tilde Q(t),
    $$
    where $\bar{v} = \frac{v b''\tau^2 }{\eta(1-\tilde{a}_2vb')}.$ But since $|v|\tau^2\lesssim \eta$, we see that $|\tilde v|\lesssim 1$. 

    {\it Remark.} Strictly speaking, we need $|\tilde v|\le 1$ to prove rescaling invariance. This can be achieved by first applying a trivial partition to $T$ into $O(1)$ smaller cubes, and rescale each smaller cube. Since this is harmless for decoupling considerations, we will ignore these minor issues of constant multiples.

     \fbox{Defining $\tilde v$}
     
     By the remark right above, we may take $b''=1$ and $b'=0$ for simplicity. Thus, we have
     \begin{equation}\label{eqn:defn_tilde_v}
         \tilde v:=v\tau^2\eta^{-1},
     \end{equation}
     which lies in $[-1,1]$. Thus, we now have reduced to 
      \begin{equation*}
    \begin{bmatrix}
        s_1\\
        s_2-\tilde v \tilde A(s)\tilde Q(t)\\        
        \tilde A(s)\left(\tau t+J_0t_0+(J_0 u) (Q(t_0)+\tau^2 \tilde Q(t))\right)\\
        x(\lambda s)+w\tilde A(s)\left(Q(t_0)+\nabla Q(t_0)\cdot \tau t+\tau^2 \tilde Q(t)\right)
    \end{bmatrix}.
    \end{equation*}
     and so our argument is finished for the second coordinate. 

     \subsubsection{Simplifying vector coordinate, Part II}

    We now eliminate the constant term in $t$ in the vector coordinate. Using affine invariance with the first and the second coordinates, we can transform the vector coordinate to
    \begin{equation*}    
        \tau \tilde A(s)\left(t+\tau\tilde Q(t)(J_0 u+a_2v (J_0t_0)+a_2v (J_0u)Q(t_0) )\right),    
    \end{equation*}
    where we have used $\tilde A(s)=A(\lambda s)$ and \eqref{eqn:defn_tilde_v}.

    \fbox{Defining $\tilde u$}
    
    We now define
    \begin{equation}\label{eqn:defn_tilde_u}
        \tilde u=\tau(J_0 u+a_2v (J_0t_0)+a_2v (J_0u)Q(t_0) ),
    \end{equation}
    which we can assume to lie within $[-1,1]^l$ by another trivial partition if necessary. Rescaling by $\tau^{-1}$, we have transformed the vector coordinate to $\tilde A(s)(t+\tilde u\tilde Q(t))$.

    \subsubsection{Analysis of last coordinate}\label{sec:sigma_flat_rescaling}
    We have now reduced to
    \begin{equation}\label{eqn:reduction_2nd_coordinate}
    \begin{bmatrix}
        s_1\\
        s_2-\tilde v \tilde A(s)\tilde Q(t)\\        
        \tilde A(s)(t+\tilde u\tilde Q(t))\\
        x(\lambda s)+w\tilde A(s)\left(Q(t_0)+\nabla Q(t_0)\cdot \tau t+\tau^2 \tilde Q(t) \right)\\
    \end{bmatrix}.
    \end{equation}
    We may first apply an affine transformation to eliminate the affine terms in $t$ in the last coordinate, which then becomes
    \begin{equation*}
        x(\lambda s)+b \tilde A(s) \tilde Q(t),
    \end{equation*}
    where
    \begin{equation*}
        \begin{aligned}
            b&:=w\left(\tilde a_2\tilde v Q(t_0)-\tau\nabla Q(t_0)\cdot \tilde u+\tau^2\right)\\
            &=w\tau^2 \Big(a_2 vQ(t_0)-\nabla Q(t_0)\cdot (J_0 u+a_2v (J_0t_0)+a_2v (J_0u)Q(t_0) )+1\Big),
        \end{aligned}
    \end{equation*}
    using $\tilde A(s)=A(\lambda s)$, \eqref{eqn:defn_tilde_v} and \eqref{eqn:defn_tilde_u}. Since $c$ is small enough, we have $b\sim w\tau^2$; by a trivial partition again, we may just take $b=w\tau^2$. Thus, \eqref{eqn:reduction_2nd_coordinate} becomes
    \begin{equation}\label{eqn:reduction_2nd_coordinate_b=wsigma}
        \begin{bmatrix}
        s_1\\        
        s_2-\tilde v \tilde A(s)\tilde Q(t)\\
        \tilde A(s)(t+\tilde u\tilde Q(t))\\        
        x(\lambda s)+w\tau^2 \tilde A(s) \tilde Q(t)
    \end{bmatrix},
    \end{equation}

    Now it is time to apply the assumption of $\sigma$-flatness \eqref{eqn:sigma_flatness_rescaling_invariance}. In the reduced form \eqref{eqn:reduction_2nd_coordinate_b=wsigma}, this means that for all $(s,t)\in [-1,1]^{2+l}$ we have
    \begin{equation}\label{eqn:flatness_alpha}
    \begin{aligned}
        &|x(\lambda s)+w\tau^2 \tilde A(s) \tilde Q(t)\\
        &-\alpha_0-\alpha_1 s_1-\alpha_2 (s_2-\tilde v \tilde A(s)\tilde Q(t))-\alpha \cdot \tilde A(s)(t+\tilde u\tilde Q(t))|\le \sigma.
    \end{aligned}
    \end{equation}

    \fbox{Defining $\tilde x$}
    
    Taking $t=0$ in \eqref{eqn:flatness_alpha} gives
    \begin{equation}\label{eqn:flatness_t=0}
        |x(\lambda s)-\alpha_0-\alpha_1 s_1-\alpha_2 s_2|\le \sigma,\quad \forall s\in [-1,1]^2.
    \end{equation}
    We remark that since $\overline x:=x\circ \lambda$ is a polynomial, we can assume without loss of generality that $\alpha_0=\overline x(0)$ and $(\alpha_1,\alpha_2)=\nabla \overline x(0)$ (see \cite[Proposition 5.9]{LiYang2024}). In particular, we have
    \begin{equation}\label{eqn:Oct_08}
        \alpha_2=\eta \partial_2 x(c).
    \end{equation}    
    We then define
    \begin{equation}\label{eqn:defn_tilde_x}
        \tilde x(s):=\sigma^{-1}(\overline x(s)-\alpha_0-\alpha_1 s_1-\alpha_2 s_2),
    \end{equation}
    such that $\tilde x\in \mathcal P_{2,d}$. Plug \eqref{eqn:defn_tilde_x} back to the last coordinate of \eqref{eqn:flatness_alpha} to obtain
    \begin{equation*}
        \alpha_0+\alpha_1 s_1+\alpha_2 s_2+\sigma \tilde x(s)+w\tau^2 \tilde A(s)\tilde Q(t),
    \end{equation*}
    which, by affine invariance, further reduces to
    \begin{equation}
        \sigma \tilde x(s)+(\alpha_2 \tilde v+w\tau^2)\tilde A(s)\tilde Q(t).
    \end{equation}

    \fbox{Defining $\tilde w$}
    
    By the triangle inequality applied to \eqref{eqn:flatness_alpha} and \eqref{eqn:flatness_t=0}, we have
    \begin{equation*}
        |(w\tau^2+\alpha_2 \tilde v-\alpha\cdot \tilde u)\tilde Q(t)-\alpha\cdot t|\lesssim \sigma,\quad \forall t\in [-1,1]^l.
    \end{equation*}
    Since $|\det D^2 \tilde Q|\sim 1$, by Lemma \ref{lem:Hessian_bounded_below} below followed by triangle inequalities, we thus have $|w\tau^2+\alpha_2 \tilde v|\lesssim \sigma$.
    
    Thus, we may define
    \begin{equation}\label{eqn:defn_tilde_w}
        \tilde w:=\sigma^{-1}(\alpha_2 \tilde v+w\tau^2),
    \end{equation}
    such that $|\tilde w|\le 1$ by a trivial partition if needed. 

Then it can be easily observed from the affine transformations above that there exists an affine bijection $\Lambda:\R^{3+l}\to \R^{3+l}$ such that \eqref{eqn:Oct_08_01} and \eqref{eqn:Oct_08_02} hold. The exact expression of $\Lambda$ will not be used, so we leave it out.

\subsubsection{Analysis of special cases}
\begin{itemize}
    \item 
If $v=1$ and $\eta=\tau^2$, then using \eqref{eqn:defn_tilde_v}, we see $\tilde v=1$. Furthermore, when $v=1$, \eqref{eqn:reduction_2nd_coordinate_b=wsigma} becomes
\begin{equation*}
        \begin{bmatrix}
        s_1\\                
        s_2- \tilde A(s)\tilde Q(t)\\
        \tilde A(s)(t+\tilde u\tilde Q(t))\\
        \sigma \tilde x(s)+\sigma\tilde w \tilde A(s) \tilde Q(t)\\
    \end{bmatrix}.
    \end{equation*}
By affine invariance between the second and last coordinates and using \eqref{eqn:defn_tilde_w}, the above is equivalent to 
\begin{equation*}
        \begin{bmatrix}
        s_1\\                
        s_2- \tilde A(s)\tilde Q(t)\\
        \tilde A(s)(t+\tilde u\tilde Q(t))\\
        \sigma \tilde x(s)+ \sigma \tilde w s_2\\
    \end{bmatrix}.
    \end{equation*}
We may redefine
\begin{equation*}
    \hat x(s)=\tilde x(s) +\tilde w s_2, 
\end{equation*}
so that we have equivalently reduced to the case $\tilde w=0$. We also record that
\begin{equation}\label{eqn:compatibility_degen_det}
\begin{aligned}
    \partial_2 (\sigma\hat x(s))
    &=\partial_2 (\sigma\tilde x(s))+\sigma\tilde w\\
    &=\partial_2 \overline x(s)-\alpha_2+\sigma \tilde w\\
    &=\eta\partial_2 x(\lambda s),
\end{aligned}    
\end{equation}
where we have used \eqref{eqn:defn_tilde_x}, \eqref{eqn:defn_tilde_w}, \eqref{eqn:affine_lambda_S} and the assumption that $w=0$.

\item
If $\tau^2=\sigma$, $v=w=1$, and $\partial_2 x(c)=0$, then by \eqref{eqn:Oct_08} we have $\alpha_2=0$, and so by \eqref{eqn:defn_tilde_w} we have $\tilde w=1$. Also, by \eqref{eqn:defn_tilde_x} and \eqref{eqn:Oct_08}, we have
\begin{equation}
    |\partial_2 \tilde x(s)|=\sigma^{-1}\eta |\partial_2 x(\lambda s)-\partial_2 x(c)|.
\end{equation}
But by \eqref{eqn:defn_tilde_v}, we have $\tilde v=\sigma\eta^{-1}$. Thus, \eqref{eqn:Oct_14_10} follows.
 
\end{itemize}

This finishes the proof of Lemma \ref{lem:rescaling_invariance}.

\begin{lem}\label{lem:Hessian_bounded_below}
    Let $Q$ be a $C^2$ function on $[-1,1]^l$ such that some second derivative $|\partial_{ij}D^2Q(0)|\ge 1$. Then $\sup_{t\in [-1,1]^l}|Q(t)|\ge c$, where $c>0$ depends on $l$, the $C^2$ norm of $Q$, and the modulus of continuity of $D^2 Q$.
\end{lem}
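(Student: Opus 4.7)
The plan is to prove the contrapositive by a second-order finite difference argument. Suppose that $\sup_{t\in[-1,1]^l}|Q(t)|\le\varepsilon$ for some small $\varepsilon>0$ to be chosen; we will show that every entry of $D^2Q(0)$ has absolute value strictly less than $1$, contradicting the hypothesis.

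First I would invoke Taylor's theorem around the origin. Let $\omega:[0,\infty)\to[0,\infty)$ denote a modulus of continuity for $D^2Q$ on $[-1,1]^l$, which exists and satisfies $\omega(h)\to 0$ as $h\to 0^+$ since $D^2Q$ is uniformly continuous on the compact cube. Then for every $t\in[-1,1]^l$,
\begin{equation*}
Q(t)=Q(0)+\nabla Q(0)\cdot t+\tfrac12\,t^{T}D^2Q(0)\,t+r(t),
\qquad |r(t)|\le \tfrac12|t|^2\,\omega(|t|).
\end{equation*}

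Next, for any $h\in(0,1]$ and any indices $i,j$, I would extract the $(i,j)$ entry of the Hessian by a standard second-order difference. For a diagonal entry, evaluating at $t=\pm h e_i$ and adding,
\begin{equation*}
Q(he_i)+Q(-he_i)-2Q(0)=h^2\partial_{ii}Q(0)+r(he_i)+r(-he_i),
\end{equation*}
so $|\partial_{ii}Q(0)|\le 4\varepsilon/h^2+\omega(h)$. For an off-diagonal entry, evaluating at $0$, $he_i$, $he_j$ and $he_i+he_j$ and taking the alternating sum cancels all first- and second-order terms except $h^2\partial_{ij}Q(0)$, giving
\begin{equation*}
\bigl|Q(he_i+he_j)-Q(he_i)-Q(he_j)+Q(0)-h^2\partial_{ij}Q(0)\bigr|\le C_l\,h^2\omega(h),
\end{equation*}
hence $|\partial_{ij}Q(0)|\le 4\varepsilon/h^2+C_l\,\omega(h)$ as well.

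Finally I would choose the parameters in the right order: first pick $h=h(l,\omega)\in(0,1]$ so small that $C_l\,\omega(h)<\tfrac12$, and then pick $\varepsilon=\varepsilon(l,\omega,h)>0$ so small that $4\varepsilon/h^2<\tfrac12$. These choices force every $|\partial_{ij}Q(0)|<1$, contradicting the assumption. Setting $c:=\varepsilon$, which depends only on $l$ and on the modulus of continuity of $D^2Q$ (and hence at worst on $\|Q\|_{C^2}$), completes the proof. The argument is essentially a standard Taylor/finite-difference computation, so the only mild subtlety is the order of parameter choices: $h$ must be selected before $\varepsilon$ because $\varepsilon$ has to beat the $1/h^2$ blow-up.
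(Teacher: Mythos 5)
Your proof is correct and uses essentially the same mechanism as the paper's: extracting entries of $D^2Q(0)$ via second-order finite differences and controlling the error by the modulus of continuity of $D^2Q$. The presentation differs: the paper proves the lemma by induction on $l$, invoking the fundamental theorem of calculus to write the finite difference as $\int_0^c\int_0^c Q_{12}$, whereas you prove the contrapositive directly for all $l$ at once using Taylor's theorem with a modulus-of-continuity remainder. Your route is a bit more streamlined (no induction, diagonal and off-diagonal entries handled uniformly) and makes the quantitative dependence of $c$ on $h$ and $\omega$ more explicit. One small inaccuracy to fix in the write-up: the parenthetical ``(and hence at worst on $\|Q\|_{C^2}$)'' is wrong, since a bound on $\|D^2Q\|_\infty$ does not furnish a modulus of continuity for $D^2Q$; but this does not affect the argument, because the lemma as stated already allows $c$ to depend on both $\|Q\|_{C^2}$ and the modulus of continuity. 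Also, in your off-diagonal estimate the remainder at $he_i+he_j$ involves $\omega(\sqrt{2}h)$ rather than $\omega(h)$; this is harmless since $\omega(\sqrt{2}h)\to 0$ as $h\to 0$, but worth stating precisely.
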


\begin{proof}
The case $l=1$ follows easily from the fundamental theorem of calculus. For $l\ge 1$, the case $l=2$ is typical enough, and the higher dimensional cases follow from induction on $l$; we omit the details. 

For $l=2$, by symmetry, we only need to consider the cases $|Q_{11}(0)|\sim 1$ and $|Q_{12}(0)|\sim 1$.

If $|Q_{11}(0)|\sim 1$, then $|Q_{11}(t)|\sim 1$ for $t\in [-c,c]^l$ for some $c\sim 1$ by continuity. The the result follows from applying the one-dimensional case to $x_1\mapsto Q(x_1,0)$.

If $|Q_{12}(0)|\sim 1$, then $|Q_{12}(t)|\sim 1$ for $t\in [-c,c]^l$ for some $c\sim 1$ by continuity. By the fundamental theorem of calculus, we can write
\begin{align*}
    Q(c,c)-Q(0,c)-Q(c,0)+Q(0,0)=\int_0^c\int_0^c Q_{12}(t_1,t_2)dt_1dt_2,
\end{align*}
which has absolute value $\sim 1$. Thus, the result follows.
\end{proof}

\subsection{Reduction to $u=0$}\label{sec:u=0}

Lastly, we note that in some cases, we may apply a nonlinear change of variables $t':=t+uQ(t)$ to reduce to the case $u=0$. With this, we have $Q(t)=\tilde Q(t')$ for some $\tilde Q$ that lies in $\mathcal Q(\tilde c_1,\tilde c_2)$ (see \eqref{eqn:Q_small}) for possibly different $0<\tilde c_1\ll \tilde c_2\ll 1$. Unfortunately, the property $u=0$ is not invariant under rescalings, as can be seen from \eqref{eqn:defn_tilde_u}.

{\it Remark.} If we further assume that $Q$ has slightly higher regularity, say, $Q\in C^3$, then by the Morse Lemma (see, for instance, \cite{MilnorMorseLemma}), we may even take $Q(t)=c\sum_{i=1}^l e_i t_i^2$ where $e_i\in \{1,-1\}$. Even if we do not use Morse Lemma, within the context of decoupling, we can use Pramanik-Seeger (see \cite[Section 7]{BD2015}) to achieve the same reduction.

\section{Degeneracy determinant}\label{sec:degeneracy_determinant}
In this section, we introduce a degeneracy determinant $H$ defined on the pair $(\mathcal M,\mathfrak R)$, as in Definition \ref{defn:degeneracy_determinant} (recall \eqref{eqn:defn_family_M} and \eqref{eqn:mathfrak_R}). We begin with the following proposition.
\begin{prop}\label{prop:Gaussian_curvature_general}
    The absolute value of the Gaussian curvature of the parametrisation $\Phi$ in \eqref{eqn:defn_M} is comparable to
    \begin{equation}
        |\partial_2 x(s)|^l |\det D^2 x(s)|.
    \end{equation}
\end{prop}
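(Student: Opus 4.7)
The plan is to follow the same template as Proposition \ref{prop_curvature_second_case}, namely, to locate a convenient normal vector $N$, verify that $|N|\sim 1$, compute the second fundamental form in block form, and read off the determinant. The crucial observation will be that, because $A$ is affine in $s$, $Q$ depends only on $t$, and $x$ depends only on $s$, the second fundamental form will split as a block-diagonal matrix, mirroring \eqref{eqn:Apr_21_02}.

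First, I would compute the tangent frame
\begin{align*}
\partial_{s_i}\Phi &= (\delta_{i1},\,\delta_{i2}-\partial_i A\cdot Q,\,\partial_i A(t+uQ),\,\partial_i x),\\
\partial_{t_k}\Phi &= (0,\,-A\partial_k Q,\,A(e_k+u\partial_k Q),\,0),
\end{align*}
and solve $N\perp \partial_{t_k}\Phi$ for a normal $N=(N_0,N_1,N_{\mathrm{vec}},N_{l+2})$. Writing the ansatz $N_{\mathrm{vec}}=\mu\nabla Q$, the $t_k$-equations force $N_1=\mu(1+u\cdot\nabla Q)$. Plugging this into $N\perp \partial_{s_1}\Phi$ and $N\perp \partial_{s_2}\Phi$ and using the identity
\[
N_1 Q-N_{\mathrm{vec}}\cdot(t+uQ)=\mu\,LQ(t),
\]
I get the single constraint $\mu[(1+u\cdot\nabla Q)-\partial_2 A\cdot LQ]+N_{l+2}\partial_2 x=0$. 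The key scaling choice (which avoids blow-up when $\partial_2 x=0$) is
\[
\mu:=-\partial_2 x,\qquad N_{l+2}:=(1+u\cdot\nabla Q)-\partial_2 A\cdot LQ(t),
\]
from which $N_0$ is read off. Since $Q(0)=0$, $\nabla Q(0)=0$, and the constant $c$ defining $\mathcal A,\mathcal Q$ is small, $N_{l+2}=1+O(c)\sim 1$, and hence $|N|\sim 1$ uniformly on $[-1,1]^{2+l}$.

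Next I would compute $\partial_i\partial_j\Phi$. Because $A$ is affine in $s$ and $x$ is independent of $t$:
\[
\partial_{s_is_j}\Phi=(0,0,0,\partial_{ij}x),\quad
\partial_{s_it_k}\Phi=(0,-\partial_i A\,\partial_k Q,\,\partial_i A(e_k+u\partial_k Q),0),\quad
\partial_{t_kt_m}\Phi=(0,-A,\,Au,0)\partial_{km}Q.
\]
Taking inner products with $N$, the cross term evaluates to
\[
\langle N,\partial_{s_it_k}\Phi\rangle=\partial_i A\bigl[-\partial_k Q(N_1-u\cdot N_{\mathrm{vec}})+N_{\mathrm{vec},k}\bigr]=\partial_i A[-\mu\partial_k Q+\mu\partial_k Q]=0,
\]
while $\langle N,\partial_{s_is_j}\Phi\rangle=N_{l+2}\partial_{ij}x$ and $\langle N,\partial_{t_kt_m}\Phi\rangle=A\partial_2 x\,\partial_{km}Q$. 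Thus the (unnormalized) second fundamental form is block-diagonal:
\[
\widetilde{II}=\begin{bmatrix} N_{l+2}\,D^2x & O\\ O & A\,\partial_2 x\,D^2Q\end{bmatrix},
\qquad
|\det\widetilde{II}|=N_{l+2}^{\,2}\,|A|^l\,|\det D^2Q|\,|\partial_2 x|^l\,|\det D^2 x|.
\]

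Finally, I would combine this with the standard identity $|K|=|\det\widetilde{II}|/(|N|^{l+1}\det g)$ (where $\det g\cdot|N|^2=\det[D\Phi\mid N]^2$). A routine check (easiest at $t=0$ and then by continuity in $c$) shows $\det[D\Phi\mid N]=A^l(1+|\nabla x|^2)+O(c)\sim 1$, hence $\det g\sim 1$, while $|N|\sim 1$ and $|A|,|\det D^2 Q|,|N_{l+2}|\sim 1$ by the definitions of $\mathcal A$ and $\mathcal Q$ together with $\nabla Q(0)=0$. Therefore
\[
|K|\sim |\partial_2 x|^l|\det D^2 x|,
\]
as claimed. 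The main (minor) obstacle is bookkeeping the uniform bounds $N_{l+2}\sim 1$ and $\det g\sim 1$ over all parameters $(s,t)\in[-1,1]^{2+l}$, which ultimately reduces to choosing $c$ small enough so that $u\cdot\nabla Q$ and $\partial_2 A\cdot LQ$ stay strictly below $1$.
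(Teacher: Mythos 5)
Your proof is correct and follows essentially the same template as the paper's: locate a normal vector, observe that the second fundamental form is block-diagonal because $A$ is affine in $s$ while $Q$ is independent of $s$ and $x$ is independent of $t$, and read off the determinant. The one substantive difference is that the paper first invokes the remark in Section \ref{sec:u=0} to reparametrise to $u=0$ (since reparametrisation preserves Gaussian curvature), which shortens the computation, whereas you carry the general $u$ through; your normal vector correctly specialises to the paper's when $u=0$, and the key cancellation $\langle N,\partial_{s_i t_k}\Phi\rangle=0$ works in the general case as you show. One small slip: the normalisation identity should read $|K|=|\det\widetilde{II}|/(|N|^{l+2}\det g)$ rather than $|N|^{l+1}$ — for a hypersurface of dimension $2+l$ the unnormalised form picks up a factor $|N|^{-(2+l)}$ — but since $|N|\sim 1$ this does not affect the conclusion.
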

\begin{proof}
    It suffices to assume $u=0$ as mentioned in Section \ref{sec:u=0} since a reparametrisation does not change the Gaussian curvature. By direct computation, a normal vector to the surface is given by
    \begin{equation*}
    \begin{bmatrix}
        -(1-a_2 LQ)\partial_1 x-a_1 \partial_2 x LQ\\
        -\partial_2 x\\        
        -\partial_2 x\nabla Q\\
        1-a_2 LQ
    \end{bmatrix},
    \end{equation*}
    where we recall $LQ(t)=Q(t)-t\cdot \nabla Q(t)$. The matrix of the second fundamental forms is given by
    \begin{equation*}
        \begin{bmatrix}
            (1-va_2 LQ)D^2 x & O\\
            O& (w+v\partial_2 x)A(s) D^2 Q
        \end{bmatrix}.
    \end{equation*}
    Since $1-va_2 LQ\sim 1$, $|\det D^2 Q|\sim 1 $ and $A(s)\sim 1$, the result follows.
\end{proof}

\begin{prop}
    $H\Phi:=\partial_2 x$ is a degeneracy determinant with respect to the pair $(\mathcal M,\mathfrak R)$.
\end{prop}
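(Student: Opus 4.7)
The plan is to verify the three properties of Definition \ref{defn:degeneracy_determinant}, namely Lipschitz continuity, totally degenerate approximation, and regularity under rescaling. The key observation is that $H\Phi(s,t) = \partial_2 x(s_1, s_2)$ is a polynomial of degree $\le d-1$ depending only on $s$, and the family $\mathcal R_\Phi$ in \eqref{eqn:defn_family_R} does not depend on $x$. Hence most verifications reduce to elementary polynomial estimates together with a direct inspection of the rescaling formulas from Lemma \ref{lem:rescaling_invariance}.

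For \emph{Lipschitz continuity}: since $x \in \mathcal P_{2,d}$ satisfies $\norm{x}_{L^\infty([-1,1]^2)} \le 1$, Markov's inequality for polynomials gives $\norm{D^2 x}_{L^\infty([-1,1]^2)} \lesssim_d 1$. Because $H\Phi(s,t) = \partial_2 x(s)$ does not depend on $t$, the same constant bounds the Lipschitz constant of $H\Phi$ on $U_\Phi = [-1,1]^{2+l}$.

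For \emph{totally degenerate approximation}, if $\norm{H\Phi}_\infty \le \sigma$, I would define $\tilde x(s_1, s_2) := x(s_1, 0)$, which lies in $\mathcal P_{2,d}$ and satisfies $\partial_2 \tilde x \equiv 0$, and set $\Psi := \Phi_{A, Q, u, \tilde x}$ using the identical data $A, Q, u$. Since $\bar\Phi = \bar\Psi$, we have $\mathcal R_\Psi = \mathcal R_\Phi$. By the fundamental theorem of calculus,
$$|x(s) - \tilde x(s)| = \Big|\int_0^{s_2}\partial_2 x(s_1, v)\,dv\Big| \le \sigma,$$
so $\Phi$ and $\Psi$ differ only in the last coordinate by at most $\sigma$, which immediately yields the nested vertical-neighborhood inclusions required by Definition \ref{defn:degeneracy_determinant} with $\beta = 1$.

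For \emph{regularity under rescaling}, given $R_\Phi \in \mathcal R_\Phi$ with all dimensions essentially $\ge \mu$, the remark after Definition \ref{defn:rescaling_invariant_pair} (Part \eqref{item:Mar_25_rescaling_invariant_01} with $\sigma \sim 1$) furnishes $\Phi_{R_\Phi} \in \mathcal M$ and $\Xi \in \mathcal X$. Tracing the construction in Lemma \ref{lem:rescaling_invariance} with $v = 1$, $w = 0$, the map $\Xi$ has the form $\Xi(s,t) = (\lambda s, \tau t + t_0)$ with $\lambda$ as in \eqref{eqn:affine_lambda_S}, and the last coordinate of $\Phi_{R_\Phi}$ is given by \eqref{eqn:defn_tilde_x}. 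Using the flexibility to choose the affine parameters $\alpha_0 = \alpha_1 = \alpha_2 = 0$ together with rescaling constant $\sigma := \sup_S |x| \lesssim 1$, we obtain $\tilde x(s) = \sigma^{-1} x(\lambda s)$, and hence
$$|H\Phi_{R_\Phi}(u)| = |\partial_2 \tilde x(s)| = \sigma^{-1}\eta_S |\partial_2 x(\lambda s)| = \sigma^{-1}\eta_S |H\Phi(\Xi u)|.$$
Since $\sigma \sim 1$ and $\eta_S \le 1$, the prefactor $\sigma^{-1}\eta_S$ is $\lesssim 1$, giving the upper bound; since each dimension of $R_\Phi = \bar\Phi(S \times T)$ is $\gtrsim \mu$ and $\bar\Phi$ is near the identity, we have $\eta_S \gtrsim \mu$, yielding the lower bound with $C = 1$.

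The main obstacle I anticipate is in the regularity-under-rescaling step: the natural Taylor-type choice $\alpha_i = \partial_i(x \circ \lambda)(0)$ that appears in the proof of Lemma \ref{lem:rescaling_invariance} would introduce a shift by $\partial_2 x(c_S)$ (the center $c_S$ of $S$) into $\partial_2 \tilde x$, which could break the upper bound at points where $\partial_2 x \circ \lambda$ happens to vanish. The fix is to exploit the freedom in the affine normalization to take $\alpha_0 = \alpha_1 = \alpha_2 = 0$, which is compatible with the other affine reductions in the lemma and makes $|H\Phi_{R_\Phi}|$ a constant multiple of $|H\Phi \circ \Xi|$.
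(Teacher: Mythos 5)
Your proof of Part \eqref{item:totally_degen} (totally degenerate approximation) coincides with the paper's: the paper defines $y(s_1):=x(s_1,0)$, sets $\Psi:=\Phi_{A,Q,u,y}$, and invokes the mean value theorem; your $\tilde{x}(s_1,s_2):=x(s_1,0)$ and the FTC computation are the same argument with $\beta=1$. The paper declares Parts \eqref{item:Lipschitz} and (3) to require no argument, while you spell them out.

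Your verification of Part \eqref{item:Lipschitz} via Markov's inequality is correct. In Part (3), two points deserve a small correction. First, the remark following Definition \ref{defn:rescaling_invariant_pair} invokes rescaling with $\sigma\sim 1$; you should take, say, $\sigma=1$ rather than $\sigma:=\sup_S|x|$, since the latter need not be comparable to $1$ (and could even vanish). With $\sigma=1$ the estimate $|H\Phi_{R_\Phi}(u)|=\eta_S|H\Phi(\Xi u)|$ holds with $\eta_S\in(0,1]$, giving the upper bound exactly and the lower bound once $\eta_S\gtrsim\mu$. Second, the ``obstacle'' you anticipate with the Taylor normalisation $\alpha_2=\eta\,\partial_2 x(c)$ actually does not arise: in the first special case of Lemma \ref{lem:rescaling_invariance}, the affine transformation that forces $\tilde{w}=0$ replaces $\tilde{x}$ by $\tilde{x}+\tilde{w}s_2$, which cancels precisely the $\alpha_2 s_2$ term and recovers $\partial_2\tilde{x}(s)=\sigma^{-1}\eta_S\,\partial_2 x(\lambda s)$ with no shift. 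Your alternative normalisation $\alpha_i=0$ (which is admissible with $\sigma=1$ because $x\in\mathcal P_{2,d}$) lands at the identical formula, so your fix is valid, just not strictly necessary.
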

\begin{proof}
For Part \eqref{item:totally_degen}, suppose $\Phi\in \mathcal M$ is such that $\norm{H \Phi}_{L^\infty ([-1,1]^{2+l})}\le \sigma$, which means that $|\partial_2 x(s)|\le \sigma$ for all $s\in [-1,1]^2$. 

Define $y(s_1):=x(s_1,0)$, so $y\in \mathcal P_{1,d}$. Then we define $\Psi\in \mathcal M$ by
    \begin{equation}
        \Psi=(s_1,s_2- A(s)Q(t),  A(s)(t+uQ(t)), y(s_1)).
    \end{equation}
    We have $\mathcal R_{\tilde \Phi}=\mathcal R_{\Psi}$ since they have the same coordinate space parametrisation. Also, $H\Psi\equiv 0$, and by the mean value theorem, we see that $\Psi$ is within $O(\sigma)$ of $\Phi$. 
    
    Also, note that \eqref{eqn:regularity_degen_det} follows from \eqref{eqn:compatibility_degen_det}. This finishes the proof.

\end{proof}

\section{A general sublevel set decoupling}\label{sec:sublevel_set}
In this section, we state and prove the required sublevel set decoupling Assumption \ref{item:sublevel_set_decoupling} of Theorem \ref{thm:degeneracy_locating_principle}.

\begin{thm}\label{thm:general_sublevel_set}
Let $\Phi=\Phi_{A,Q,u,x}\in \mathcal M$ as in \eqref{eqn:defn_M}. Then for any $0<\delta\ll 1$, there exist a boundedly overlapping cover $\mathcal S_\delta$ of the sublevel set $\{s:|P(s)|<\delta\}$ by parallelograms $S\in \mathcal S$ of width at least $\delta$ (recall \eqref{eqn:affine_lambda_S}), and a partition $\mathcal T_\delta$ of $[-1,1]^l$ into squares $T$, such that the followings hold:
\begin{enumerate}    
    \item $l(T)^2=\eta_S\ge \delta$ (see \eqref{eqn:affine_lambda_S}).
    \item $|P|\lesssim \delta$ on each $S$.
    \item The set
    \begin{equation}\label{eqn:sublevel_set_S,T}
        G(S,T):=\{(s_1,s_2-A(s)Q(t), A(s)(t+uQ(t))):s\in S, t\in T\}
    \end{equation}
belongs to $\mathcal R_\Phi$ as in \eqref{eqn:defn_family_R}.

    \item For any $2\le p\le \frac{2(3+l)}{1+l}$, the set
    \begin{equation}\label{eqn:G_0}
        G_0:=\{(s_1,s_2-A(s)Q(t), A(s)(t+uQ(t)) ):(s,t)\in [-1,1]^{2+l},|P(s)|<\delta\}
    \end{equation}
    can be $\ell^p(L^p)$ decoupled into $G(S,T)$ at the cost of $O_\eps(\delta^{-\eps})$, where the implicit constant depends only on $d,l,\eps$.
\end{enumerate}
\end{thm}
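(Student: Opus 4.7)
The plan is to decompose the sublevel set $\{|P|<\delta\}$ in $[-1,1]^2$ via the pseudo-polynomial machinery of Section \ref{sec:pseudo-polynomials}, pair each resulting cell with a scale-matched square tiling on the $t$-side, and reduce each pair via Lemma \ref{lem:rescaling_invariance} to a codimension-$1$ Type I decoupling problem in $\R^{2+l}$ of the form treated in Section \ref{sec:typeI}. The target exponent $\tfrac{2(3+l)}{1+l}$ is precisely the Bourgain--Demeter exponent $\tfrac{2(n+1)}{n-1}$ with $n=2+l$, matching the codimension-$1$ profile of the effective surface $\bar\Phi(\{P=0\}\times[-1,1]^l)$ whose $\delta$-neighborhood in the $s_2$-direction approximates $G_0$.

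Since $P=\partial_2 x$ is a bivariate polynomial of degree at most $d-1$, Proposition \ref{prop:exist_g} expresses $\{P=0\}\cap [-1,1]^2$ as the union of $O_d(1)$ pseudo-polynomial graphs $s_2=g_i(s_1)$. A dyadic decomposition on the magnitudes of $\partial_1 P$ and $\partial_2 P$, combined with iterated pseudo-polynomial slicing at resolution $\delta$, produces a boundedly overlapping cover $\mathcal S_\delta$ of $\{|P|<\delta\}$ by $O_\eps(\delta^{-\eps})$ parallelograms $S=\{s_1\in I_S,\,|s_2-g_S(s_1)|<\eta_S\}$, with $g_S$ pseudo-polynomial, on each of which $|P|\lesssim \delta$ holds uniformly. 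For each such $S$, let $\mathcal T_S$ tile $[-1,1]^l$ by cubes $T$ of side length $\sqrt{\eta_S}$. The scale relation $l(T)^2=\eta_S$ places $G(S,T)\in \mathcal R_\Phi$ by \eqref{eqn:defn_family_R}, and since $\bar\Phi$ is $C^1$-close to the identity each $G(S,T)$ is an almost parallelogram of dimensions $|I_S|\times \eta_S\times (\sqrt{\eta_S})^l$ in $\R^{2+l}$.

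Apply Lemma \ref{lem:rescaling_invariance} with $\sigma=\eta_S$ to each pair $(S,T)$, transporting the problem to $[-1,1]^{2+l}$. The smallness $|\partial_2 x|\le \delta$ on $S$, together with the pseudo-polynomial structure of $g_S$, implies that after rescaling $\tilde x$ is essentially a function of $s_1$ alone with an $O(\delta/\eta_S)$ error that is absorbed into the $\delta$-vertical neighborhood at the rescaled scale. The effective surface then reduces to the $1$D-base Type I form
\begin{equation*}
\bigl(s_1,\,g_S(s_1)-\tilde A(s_1)Q(t),\,\tilde A(s_1)(t+\tilde u\, Q(t))\bigr)
\end{equation*}
in $\R^{2+l}$, a codimension-$1$ surface with radial symmetry in the $t$-variables over the univariate base $\tilde A(s_1)$. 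Since $Q\in \mathcal Q$ is polynomial-like with nondegenerate Hessian, Condition \ref{condition:decoupling} is satisfied via Corollary \ref{cor:decoupling_condition_bivariate}, and the radial principle Theorem \ref{thm:radial_principle} provides the desired $\ell^p(L^p)$-decoupling of each $\bar\Phi(S\times[-1,1]^l)$ into $\{G(S,T):T\in \mathcal T_S\}$ at cost $O_\eps(\delta^{-\eps})$ for every $p\le \tfrac{2(3+l)}{1+l}$. Summing over the $O_\eps(\delta^{-\eps})$ boundedly overlapping cells $S\in \mathcal S_\delta$ and absorbing the cardinality into the $\eps$-loss yields the required decoupling for $G_0$.

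The main obstacle is the pseudo-polynomial decomposition: producing a boundedly overlapping cover of total cardinality only $O_\eps(\delta^{-\eps})$ with proper scale matching between $\eta_S$ and the local magnitudes of $\partial_1 P$ and $\partial_2 P$. This requires careful iterated pseudo-polynomial slicing adapted to the dyadic ranges of these derivatives and of $g_S''$, in the spirit of the bivariate polynomial decoupling from \cite{LiYang2023}. A secondary subtlety is handling sub-cells where $\tilde A''(s_1)$ (effectively $g_S''$) is small: there the effective Type I surface degenerates toward a cylinder, requiring an additional layer of pseudo-polynomial sub-decomposition in $s_1$ in order to apply the radial principle with uniform constants. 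Both difficulties are overcome by further iteration of the pseudo-polynomial machinery at logarithmic-in-$\delta$ cost, which is negligible after the $\eps$-loss is taken.
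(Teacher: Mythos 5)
Your high-level plan is aligned with the paper's: both use the pseudo-polynomial machinery of Section~\ref{sec:pseudo-polynomials}, a dyadic decomposition on $|\partial_1 P|$ and $|\partial_2 P|$ (the paper realises this via induction on $\deg P$ applied to $\partial_1 P, \partial_2 P$), scale-matched cubes $T$ with $l(T)^2=\eta_S$, and the radial principle Theorem~\ref{thm:radial_principle} with Bourgain--Demeter Theorem~\ref{thm:Bourgain_Demeter} to finish in each cell. However, there is a concrete gap.

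You assume throughout that each cell $S$ is of the form $\{s_1\in I_S,\,|s_2-g_S(s_1)|<\eta_S\}$ with $g_S$ a bounded-slope pseudo-polynomial, i.e.\ that the level curve is a graph over $s_1$. Via $g'=-\partial_1 P/\partial_2 P$, this only happens in the regime $|\partial_1 P|\lesssim|\partial_2 P|$. In the opposite regime $|\partial_2 P|\ll|\partial_1 P|$ the level curve is steep, the natural parametrisation is $s_1=g(s_2)$, and the parallelograms of $\mathcal S$ (which by \eqref{eqn:affine_lambda_S} are sheared $s_1$-strips with slope bounded by $1$) cannot directly tile a thin tube around a steep curve. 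The paper emphasises precisely this asymmetry between $s_1$ and $s_2$ --- unlike \cite{LiYang2023}, which you invoke, the present family $\mathcal M$ is \emph{not} symmetric, because $s_2$ plays the role of the radius. Handling $|\partial_2 P|\ll|\partial_1 P|$ in the paper requires a multi-step argument: locate to $I_0$, decouple at scale $|I_0|$, dyadically decompose on $|g'|$, normalise to $|f'|\sim 1$ via Proposition~\ref{prop:g'_sim_sigma}, then \emph{invert} the pseudo-polynomial via Proposition~\ref{prop:inverse_pseudo} to reduce back to the graph-over-$s_1$ regime. None of this is captured by your ``iterated slicing in the spirit of \cite{LiYang2023},'' and it is not a logarithmic-cost refinement but a genuine structural step. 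Your appeal to $\tilde A''(s_1)\approx g_S''$ degenerating to a cylinder is the \emph{other}, easier, subcase the paper treats inside the $\sigma_1\lesssim\sigma_2$ regime; it does not touch the steep-curve problem.

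A secondary, smaller issue: the set $G_0$ and its pieces $G(S,T)$ live in $\R^{2+l}$ and involve only $\bar\Phi$; the polynomial $x$ and the last coordinate of $\Phi$ do not appear in the decoupling to be proved. Your sentences about ``$\tilde x$ is essentially a function of $s_1$ alone'' are therefore extraneous, and your invocation of Lemma~\ref{lem:rescaling_invariance} (which is phrased for the full $(2+l)$-dimensional surface $\Phi\subset\R^{3+l}$ and relies on $\sigma$-flatness of the graph coordinate) is not literally applicable; what is actually needed, and what the paper uses, is only the coordinate-space rescaling of $\bar\Phi$, i.e.\ the pre-flatness part of that construction. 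This can be repaired, but as written it mixes two different decoupling problems.
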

Once Theorem \ref{thm:general_sublevel_set} is established, we apply this to the polynomial $P:=H\Phi=\partial_2 x$ to fulfill the requirement of \ref{item:sublevel_set_decoupling} of Theorem \ref{thm:degeneracy_locating_principle}. The required $\Xi$ is just $\lambda_{S\times T}$, and the surface $\Psi$.

The rest of this section is devoted to the proof of Theorem \ref{thm:general_sublevel_set}.

\fbox{Notation} In this section, we omit the dependence of constants on $d,l$. We also omit the decoupling exponents $\ell^p(L^p)$.

\subsection{Preliminary reduction}\label{sec:t'=t}

Before we start the main proof, we note that it suffices to prove the case $u=0$. Indeed, given $G_\delta(S,T):=G(S,T)\cap \{|P|<\delta\}$ and $G_0$ with a general $u$, denote $t'=\gamma(t)=t+uQ(t)$, $T':=\gamma(T)$, then
\begin{equation*}
    G_\delta(S,T)=\{(s_1,s_2-A(s)Q(\gamma^{-1} (t')), A(s)t'):s\in S, t'\in T',|P(s)|<\delta\}.
\end{equation*}
We may apply the decoupling for the case $u=0$, with $Q$ replaced by $Q\circ \gamma^{-1}$, to decouple $G_0$ into sets of the form
\begin{equation}\label{eqn:Mar_28_01}
    \{(s_1,s_2-A(s)Q(\gamma^{-1} (t')), A(s)t'):s\in S, t'\in T'',|P(s)|<\delta\},
\end{equation}
where $T''$ is a cube of side length $\eta_S^{1/2}$. But since $\gamma$ is bi-Lipschitz, this means that $\gamma^{-1}(T'')$ is also essentially a cube of side length $\eta_S^{1/2}$, which can then be slightly enlarged to become an actual square $T$ of side length $\sim \eta_S^{1/2}$. This is harmless since we can tolerate bounded overlap. Thus, \eqref{eqn:Mar_28_01} is enlarged to
\begin{equation*}
    \{(s_1,s_2-A(s)Q(t), A(s)(t+uQ(t))):s\in S, t\in T,|P(s)|<\delta\},
\end{equation*}
as required.

\subsection{Induction on degree}
We now adapt the argument of \cite[Section 4.3]{LiYang2023}, namely, induction on the degree $d$ of the polynomial $P$.

The base case when $d=0$ is trivial. Now let $d\ge 1$ and assume Theorem \ref{thm:general_sublevel_set} holds for all polynomials of degree at most $d-1$.

Let $P$ be a polynomial of degree at most $d$. Then the polynomials $\partial_1 P$, $\partial_2 P$ have degree at most $d-1$.

\subsection{Dyadic decomposition}

Perform a dyadic decomposition to $G_0$ to get the following sets:
\begin{equation}\label{eqn:G1G2G3}
    \begin{aligned}
        G_1&:=G_0\cap \{s\in [-1,1]^2:|\partial_1 P(s)|<\delta\},\\
    G_2&:=G_0\cap \{s\in [-1,1]^2:|\partial_1 P(s)|\sim \sigma_1,|\partial_2 P(s)|<\delta\},\\
    G_3&:=G_0\cap\{s\in [-1,1]^2:|\partial_1 P(s)|\sim \sigma_1,|\partial_2 P(s)|\sim \sigma_2\}.
    \end{aligned}
\end{equation}

\subsection{Applying induction hypothesis}
Now we apply the induction hypothesis to each set $G_i$, $i=1,2,3$ above to decouple them into rectangles.

Let us be more precise. For $G_1$, we apply the induction hypothesis to $\partial_1 P$ to get rectangles $S,T$ in the coordinate space with $\eta_S=l(T)^2$, such that each set of the form 
\begin{align*}
    \{(s_1,s_2-A(s)Q(t), A(s)t):s\in S,t\in T,|\partial_1 P(s)|<\delta\}
\end{align*}
is an almost rectangle, and such that on each $S,T$ we still have $|\partial_1 P(s)|\lesssim \delta$.

For $G_2$, we apply the induction hypothesis twice, to $\partial_1 P-\sigma_1$ and $\partial_2 P$ respectively, to get rectangles $S,T$ in the coordinate space with $\eta_S=l(T)^2$, such that each set of the form 
\begin{align*}
    \{(s_1,s_2-A(s)Q(t), A(s)t):s\in S,t\in T,|\partial_1 P(s)|\sim \sigma_1,|\partial_2 P(s)|<\delta\}
\end{align*}
is an almost rectangle, and such that on each $S,T$ we still have $|\partial_1 P|\sim \sigma_1$ and $|\partial_2 P|\lesssim \sigma_2$. (To ensure the former, we should actually be slightly more careful here about the ratio we choose in the dyadic decomposition; See \cite[Section 4.3.1]{LiYang2023} for details.)

The argument for $G_3$ is similar to that for $G_2$.

\subsection{Univariate cases}\label{sec:first_univariate_case}

We first decouple sets of the form $G_1$ and $G_2$ as in \eqref{eqn:G1G2G3}. For $G_1$, since we are at scale $\delta$ and $|\partial_1 P(s)|<\delta$, we may assume $P$ is a univariate polynomial in $s_2$. By the fundamental theorem of algebra, the set $\{s_2:|P(s_1,s_2)|<\delta\}$ is just a union of $O(1)$ intervals, so we may just locate to one such interval (without loss of generality, we may assume its length is at least $\delta$), and denote the resulting parallelogram by $S'$. Thus, by affine transformations, we are decoupling
\begin{equation*}
    \{(s_1, s_2-A(s)Q(t), A(s)t):s\in S',t\in T\}.
\end{equation*}
where we may also assume by rescaling that $S'$ is over $[-1,1]$. 
Since $\eta_{S'}\le \eta_S=l(T)^2$, by affine invariance, we just need to decouple at scale $\eta_{S'}$, for
\begin{equation*}
    \{(s_1,-\tilde A(s_1)Q(t),\tilde A(s_1)t):s\in [-1,1],t\in T\}
\end{equation*}
for some affine transformation $\tilde A$. But the decoupling of this follows exactly from Theorems \ref{thm:radial_principle} and \ref{thm:Bourgain_Demeter} which results in $T$ being decoupled to smaller cubes $T'$ satisfying $l(T')^2=\eta_{S'}$ (Indeed, this is essentially just decoupling for the light cone). But then the sets
\begin{equation*}
    \{(s_1, s_2-A(s)Q(t), A(s)t):s\in S',t\in T'\}
\end{equation*}
are essentially parallelograms, so we are done.

Now we decouple sets of the form $G_2$ as in \eqref{eqn:G1G2G3}. Since we are at scale $\delta$ and $|\partial_2 P(s)|<\delta$, we may assume $P$ is a univariate polynomial in $s_1$, and the proof of this case is similar to the first univariate case above. We omit the details.

\subsection{Main case}\label{sec:main_case}
Now we decouple sets of the form $G_3$ as in \eqref{eqn:G1G2G3}. This will be the main part of the proof of sublevel set decoupling. Compared with \cite[Section 4.3.3]{LiYang2023}, the main difficulty here is that we do not have symmetry between $s_1,s_2$. Thus, the cases $\sigma_1\le \sigma_2$ and $\sigma_1\ge \sigma_2$ are different in nature.

\subsubsection{Case $\sigma_1\lesssim \sigma_2$}\label{sec:sigma1<sigma2}
By rescaling in the $s_1$ direction, we can assume that $S$ is over $[-1,1]$. We may still assume $\sigma_1\ge \delta$ after the rescaling, since otherwise it reduces to Section \ref{sec:first_univariate_case}.

\begin{lem}\label{lem:level_approx}
There are $O(1)$ many subintervals $I_i\sub [-1,1]$ and algebraic functions $g_i$ over $I_i$ such that for some suitable degree constants $C_1,C_2$ we have
\begin{align*}
    &\{(s_1,s_2):|P(s_1,s_2)| <\delta \} \cap \Omega_0\\
    &\sub \bigsqcup_i \{(s_1,s_2):s_1\in I_i, |s_2-g_i(s_1)|\leq C_1\delta \sigma_2^{-1}\}\cap \Omega_0\\
    &\sub \{(s_1,s_2):|P(s_1,s_2)| \le C_2 \delta \}\cap \Omega_0.
\end{align*} 
\end{lem}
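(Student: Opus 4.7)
My plan is to build the $g_i$ from two sources: the zero level set $\{P=0\}\cap \Omega_0$ and the two affine sides $s_2=ms_1+b\pm h$ of $\Omega_0$. Since $|\partial_2 P|\sim\sigma_2>0$ on the connected parallelogram $\Omega_0$, the continuous function $\partial_2 P$ has constant sign, so I may assume $\partial_2 P>0$; then for each $s_1\in[-1,1]$ the slice map $s_2\mapsto P(s_1,s_2)$ is strictly increasing on $\{|s_2-ms_1-b|<h\}$.

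Next I introduce the boundary polynomials $y^\pm(s_1):=P(s_1, ms_1+b\pm h)$, which are univariate polynomials in $s_1$ of degree $\leq d$. By the fundamental theorem of algebra, the zero sets of $y^-$, $y^--\delta$, $y^+$, $y^++\delta$ partition $[-1,1]$ into $O_d(1)$ subintervals on which all four have constant sign. I discard those subintervals on which $y^-\geq \delta$ or $y^+\leq -\delta$ (on which the sublevel set does not meet the vertical strip in $\Omega_0$), and declare the remaining ones to be the $I_i$. On each $I_i$ exactly one of three regimes holds, from which I read off $g_i$: in case (a), $y^-\leq 0\leq y^+$ throughout $I_i$, and $g_i$ is the unique algebraic function on $I_i$ satisfying $P(s_1,g_i(s_1))=0$, whose existence follows from the implicit function theorem and whose algebraicity of degree $\leq d$ is given by Proposition \ref{prop:exist_g}; in case (b), $0<y^-<\delta$ throughout $I_i$, and I take $g_i(s_1):=ms_1+b-h$; in case (c), $-\delta<y^+<0$ throughout $I_i$, and symmetrically $g_i(s_1):=ms_1+b+h$.

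Finally, the two containments follow by the mean value theorem together with $|\partial_2 P|\sim\sigma_2$. For the upper inclusion, any $(s_1,s_2)\in \Omega_0$ with $|s_2-g_i(s_1)|\leq C_1\delta/\sigma_2$ satisfies
\[ |P(s_1,s_2)|\leq |P(s_1,g_i(s_1))|+C_1\delta\lesssim \delta, \]
since $|P(s_1,g_i(s_1))|$ is $0$ in case (a) and strictly less than $\delta$ in cases (b), (c). For the lower inclusion, the sublevel-set slice at any $s_1\in I_i$ is an interval (by monotonicity of $P$ in $s_2$) containing $g_i(s_1)$; in case (a) its length is at most $2\delta/\sigma_2$ up to constants, directly from $|\partial_2 P|\sim\sigma_2$. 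In cases (b), (c) one verifies either that the slice's upper endpoint is the interior curve $P^{-1}_{s_1}(\pm\delta)$ (giving width $\lesssim(\delta-y^-)/\sigma_2<\delta/\sigma_2$), or that the entire $\Omega_0$-slice of width $2h$ lies in the sublevel set, in which case $|y^+-y^-|\sim h\sigma_2\leq 2\delta$ forces $h\lesssim \delta/\sigma_2$.

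The main obstacle will be the case analysis needed in the final step to verify uniformly that $g_i$ sits inside the sublevel-set slice and that its distance to any point in the slice is at most $C_1\delta/\sigma_2$, particularly in the mixed sub-cases of (b) and (c); once handled, pairwise disjointness of the $I_i$ gives the disjoint union asserted in the lemma, and all constants depend only on $d$ as required.
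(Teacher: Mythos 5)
Your argument is correct and uses the same core tools the paper's omitted proof points to: the monotonicity of $P(s_1,\cdot)$ coming from $|\partial_2 P|\sim\sigma_2$, together with the implicit function theorem as in Proposition~\ref{prop:exist_g}, to build the anchoring curves $g_i$. The paper defers the details to \cite[Proposition 4.5]{LiYang2023}; your partition of $[-1,1]$ by the zeros of $y^\pm$, $y^-\!-\!\delta$, $y^+\!+\!\delta$ and the explicit cases (b), (c), anchoring $g_i$ to an edge of $\Omega_0$ when the zero set exits the parallelogram, supplies precisely the boundary technicality needed to make both inclusions hold uniformly with constants depending only on $d$.
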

\begin{proof}
    This follows from the implicit function theorem similar to Proposition \ref{prop:exist_g}. We omit the details since it is almost the same as the proof of \cite[Proposition 4.5]{LiYang2023}.
\end{proof}
Now we perform a little reduction. First, by Lemma \ref{lem:level_approx}, we now assume $i=1$ and there is only one $I=I_i$ and $g=g_i$. By rescaling we may assume $I=[-1,1]$, and we also pretend the set $\{|P|<\delta\}\cap S=\{(s_1,g(s_1)+u):s_1\in [-1,1],|u|\lesssim\sigma_2^{-1}\delta\}$ to simplify notation. So $g$ is a pseudo-polynomial we studied in Section \ref{sec:pseudo-polynomials}. Note also that $\eta_S=\sigma_2^{-1}\delta$.

By Proposition \ref{prop:I_2}, $g'$ and all its derivatives are essentially bounded above by $\sigma_1/\sigma_2$. We now need to decouple the set
\begin{align*}
    \{(s_1,s_2-A(s)Q(t), A(s)t):s_1\in [-1,1],|s_2-g(s_1)|\lesssim \eta_S,t\in T\}.
\end{align*}
But this is essentially the $\eta_S$-neighbourhood of the surface
\begin{equation}\label{eqn:Sep_13_01}
    \{(s_1,g(s_1)-(1+a_1s_1+a_2g(s_1))Q(t), (1+a_1s_1+a_2g(s_1)) t):s_1\in [-1,1], t\in T\}.
\end{equation}
To this end, we perform a dyadic decomposition according to $|g''|<\eta_S$ or $|g''|\sim \sigma\ge \eta_S$. By Proposition \ref{prop:fta_pseudo}, we may locate to an interval $I$. 

\fbox{$|g''|<\eta_S$}

In this case, we see $g$ is affine at scale $\eta_S$, so by affine invariance, \eqref{eqn:Sep_13_01} essentially becomes
\begin{equation*}
    (s_1,-\Lambda(s_1)Q(t),  \Lambda(s_1) t),
\end{equation*}
for some bounded affine transformation $\Lambda$ with $\Lambda(0)=1$, which can be dealt with using Theorems \ref{thm:radial_principle} and \ref{thm:Bourgain_Demeter}. Note that $T$ is decoupled into cubes of side length $\eta_S^{1/2}$.

\fbox{$|g''|\sim \sigma$}

If $\sigma\sim 1$, then we can compute that the Gaussian curvature of the surface in \eqref{eqn:Sep_13_01} has its absolute value essentially $1$. Thus, Theorem \ref{thm:Bourgain_Demeter} applies, and $T$ is decoupled into cubes of side length $\eta_S^{1/2}$.

If $\sigma\ll 1$, we argue as follows. Assume by rescaling that $I=[-1,1]$, which is allowed by Proposition \ref{prop:rescaling_pseudo}. Then we define
\begin{equation*}
    f(s):=\sigma^{-1}(g(s)-g(0)-g'(0)s),
\end{equation*}
so that by Corollary \ref{cor:I_2}, $f$ is a pseudo-polynomial with $|f''|\sim 1$. By affine invariance, \eqref{eqn:Sep_13_01} is equivalent to 
\begin{equation}\label{eqn:Sep_14_01}
    (s,\sigma f(s)-(\Lambda(s)+a_2\sigma f(s))Q(t), (\Lambda(s)+a_2 \sigma f(s))t),
\end{equation}
for some bounded affine transformation $\Lambda$ with $\Lambda(0)=1$. We then apply an intermediate decoupling at scale $\sigma$, for the surface
\begin{equation*}
    (s,-\Lambda (s)Q(t),\Lambda (s)t),
\end{equation*}
which, by Theorems \ref{thm:radial_principle} and \ref{thm:Bourgain_Demeter}, gives squares of side length $\sigma^{1/2}$. Fix such a square with centre $t_0$. Then define 
\begin{equation*}
    \tilde Q(t):=\sigma^{-1}(Q(t)-Q(t_0)-\nabla Q(t_0)\cdot (t-t_0)),
\end{equation*}
which lies in $\mathcal Q$, and so \eqref{eqn:Sep_14_01} is rescaled to
\begin{equation*}
    (s,f(s)-(\Lambda(s)+a_2\sigma f(s))\tilde Q(t),(\Lambda(s)+a_2 \sigma f(s))(\sigma^{1/2}t+t_0) ).
\end{equation*}
A further affine transformation turns the above into
\begin{equation*}
    (s, f(s)-(\Lambda(s)+a_2\sigma f(s))\tilde Q(t),(\Lambda(s)+a_2 \sigma f(s))(\sigma^{1/2}t+a_2 \sigma t_0 \tilde Q(t)),
\end{equation*}
which can be rescaled to
\begin{equation*}
    (s,f(s)-(\Lambda(s)+a_2\sigma f(s))\tilde Q(t),(\Lambda(s)+a_2 \sigma f(s))(t+a_2 \sigma^{1/2} t_0 \tilde Q(t) ).
\end{equation*}
Since $\sigma\ll 1$ and $|f''|\sim 1$, a direct computation shows that this surface has Gaussian curvature $\sim 1$. Thus, we apply Theorem \ref{thm:Bourgain_Demeter} to finish the proof, and $T$ is decoupled into cubes of side length $\eta_S^{1/2}$.

\subsubsection{Case $\sigma_2 \ll \sigma_1$}
This case is more difficult.

\underline{Preliminary reductions}

We first rescale in the $s_1$ direction so that $S$ is over $[-1,1]$, and we may still assume $\sigma_2\ll \sigma_1$, otherwise it reduces to Section \ref{sec:sigma1<sigma2}. We then apply a shear transform and a translation in the $s_2$ direction, so that we may assume $S$ is of the form 
\begin{equation*}
    [-1,1]\times [-\eta,\eta],
\end{equation*}
where $\eta=\eta_S$. The relation $\sigma_2\ll \sigma_1$ still holds after this shear transform.

We then apply Lemma \ref{lem:level_approx} with the roles of $s_1$ and $s_2$ switched, so that we may assume the set $\{|P|<\delta\}\cap S=\{(g(s_2)+u,s_2):|u|<\kappa,\,\,s_2\in I_0\}$, where $g$ is a pseudo-polynomial we studied in Section \ref{sec:pseudo-polynomials}, $\kappa:=\delta \sigma_1^{-1}$, and $I_0\sub [-\eta,\eta]$. By Proposition \ref{prop:I_2}, $g'$ and all its derivatives are essentially bounded. Now we need to decouple the following set:
\begin{equation}\label{eqn:Oct_12}
    \{(s_1,s_2-A(s)Q(t), A(s)t):|s_1-g(s_2)|\lesssim \kappa,s_2\in I_0, t\in T\}.
\end{equation}
Note that $|I_0|\le \eta=l(T)^2$. 

\underline{Decoupling at scale $|I_0|$}

At scale $|I_0|$, by affine invariance, \eqref{eqn:Oct_12} essentially becomes (where $c_0$ is the centre of $I_0$)
\begin{equation*}
    \{(s_1,-\Lambda(s_1)Q(t),  \Lambda(s_1) t):|s_1-g(c_0)|\lesssim \max\{\kappa, |I_0|\}, t\in T\},
\end{equation*}
for some bounded affine transformation $\Lambda$ with $\Lambda(0)=1$. Since $|I_0|\le l(T)^2$, we apply Theorems \ref{thm:radial_principle} and \ref{thm:Bourgain_Demeter} to decouple $T$ into cubes $T'$ of side length $|I_0|^{1/2}$. Thus, \eqref{eqn:Oct_12} becomes
\begin{equation}\label{eqn:Oct_11_01}
    \{(s_1,s_2-A(s)Q(t), A(s)t):|s_1-g(s_2)|\lesssim \kappa,s_2\in I_0, t\in T'\}.
\end{equation}
If $|I_0|\le \kappa$, then the set \eqref{eqn:Oct_11_01} is already equivalent to a rectangle, so we are done. Thus, for the rest of the proof, we assume $|I_0|> \kappa$. Then our problem becomes decoupling the $\kappa$-neighbourhood of the surface
\begin{equation*}
    \{(g(s_2),s_2-(1+a_1g(s_2)+a_2 s_2) Q(t),(1+a_1g(s_2)+a_2s_2) t):s_2\in I_0, t\in T'\}.
\end{equation*}

\underline{Dyadic decomposition}

We dyadically decompose $I_0$ according to $|g'(s_2)|\le \kappa$ or $|g'(s_2)|\sim \sigma> \kappa$, and apply Proposition \ref{prop:fta_pseudo} to reduce to an interval $I\sub I_0$. 

We apply the same argument as decoupling $T$ above, now at scale $|I|$, to decouple $T'$ into cubes $T''$ of side length $|I|^{1/2}$. Thus, we now need to 
decouple at scale $\kappa$ for the surface:
\begin{equation}\label{eqn:Oct_11_02}
    \{(g(s_2),s_2-(1+a_1g(s_2)+a_2 s_2) Q(t),(1+a_1g(s_2)+a_2s_2) t):s_2\in I, t\in T''\}.
\end{equation}
If $|I|\le\kappa$ or $|g'(s_2)|\le \kappa$ for $s_2\in I$, then the set \eqref{eqn:Oct_11_02} is already equivalent to a rectangle, so we are done. Thus, we are left with the proof when $|g'(s_2)|\sim \sigma> \kappa$ and $|I|> \kappa$.

\underline{Rescaling parameter space}

We now rescale $I$ to $[-1,1]$ and $T''$ to $[-1,1]^l$, in a similar way to the proof of Lemma \ref{lem:rescaling_invariance} (followed by the treatment of Section \ref{sec:u=0}). Since $l(T'')^2=|I|$, decoupling \eqref{eqn:Oct_11_02} at scale $\kappa$ is essentially decoupling
\begin{equation}\label{eqn:Oct_11_04}
    \{(\tilde g(s_2),s_2- (\tilde \Lambda(s_2)+\tilde g(s_2))\tilde Q(t),(\tilde \Lambda(s_2)+\tilde g(s_2))t ): s_2\in [-1,1],t\in [-1,1]^l\},
\end{equation}
for some bounded affine $\tilde \Lambda$ with $\tilde \Lambda(0)=1$, $\tilde Q\in \mathcal Q$, and some pseudo-polynomial $\tilde g$ (by Proposition \ref{prop:rescaling_pseudo}) satisfying $|\tilde g'|\sim \sigma':=\sigma |I|$.

We need to decouple the surface \eqref{eqn:Oct_11_04} at scale $\kappa$, such that the parameter space is decoupled into rectangles $\tilde S\times \tilde T$ satisfying $\eta_{\tilde S}=l(\tilde T)^2$.

If $\sigma'\le \kappa$, then \eqref{eqn:Oct_11_04} is equivalent to a rectangle, so there is no further decoupling to be done. Thus, we may assume $\sigma'\ge \kappa$.

\underline{Reducing to Case $\sigma_1\lesssim \sigma_2$ }

We define
\begin{equation*}
    f(s_2):=(\sigma')^{-1}(\tilde g(s_2)-\tilde g(0)),
\end{equation*}
which is a pseudo-polynomial that satisfies $|f'|\sim 1$, by Proposition \ref{prop:g'_sim_sigma}. Thus, decoupling \eqref{eqn:Oct_11_04} at scale $\kappa$ essentially becomes decoupling the following surface at scale $\kappa (\sigma')^{-1}$:
\begin{equation}
    \{(f(s_2),s_2- (\overline \Lambda(s_2)+\sigma' f(s_2))\tilde Q(t),(\overline \Lambda(s_2)+\sigma' f(s_2))t ): s_2\in [-1,1],t\in [-1,1]^l\},
\end{equation}
for some bounded affine $\overline \Lambda$ with $\overline \Lambda(0)=1$.

We thus change $s=f(s_2)$, so $s_2=h( s_1)$ for some pseudo-polynomial $h$, by Proposition \ref{prop:inverse_pseudo}. Thus, we need to decouple
\begin{equation*}
    \{(s, h( s)-(\overline \Lambda (h(s))+\sigma' s)\tilde Q(t),(\overline \Lambda (h(s))+\sigma' s)t):s\in [-1,1],t\in [-1,1]^l\}.
\end{equation*}
But this is essentially the same as \eqref{eqn:Sep_13_01} studied in Section \ref{sec:sigma1<sigma2}, which results in the parameter space being decoupled into rectangles $\tilde S\times \tilde T$ satisfying $\eta_{\tilde S}=l(\tilde T)^2$. So we are done.

\section{Totally degenerate decoupling}\label{sec:totally_degenerate_case}
In this section, we study decoupling for the surface $\Phi=\Phi_{A,Q,u,x}$ in \eqref{eqn:defn_M} when $H\Phi=\partial_2 x\equiv 0$, which corresponds to \ref{item:degnerate_decoupling} of Theorem \ref{thm:degeneracy_locating_principle}.

Since $\partial_2 x\equiv 0$, we may write $x(s)=y(s_1)$, where $y\in \mathcal P_{1,d}$. Thus, we are decoupling at scale $\delta$ for
\begin{equation*}
    \{(s_1,s_2-A(s)Q(t),A(s)(t+uQ(t)),y(s_1)):(s,t)\in [-1,1]^{2+l}\}.
\end{equation*}

We now project to the first and last coordinates and apply uniform decoupling for polynomial curves (see \cite[Theorem 1.4]{Yang2}) to get $s_1\in I$ on which $y$ is $\delta$-flat. Since $\eta_{I\times [-1,1]}=1$, we have $\bar\Phi(I\times [-1,1]\times [-1,1]^{l})\in \mathcal R_\Phi$. This finishes the proof.

For the lower dimensional decoupling, we consider two cases, namely, $\eta_S=w$ or $m_{11}=w$, where $w$ is the width of $S$.

If $\eta_S=w$, then $l(T)^2=w$, so decoupling at scale $w$ reduces to decoupling $(s_1,x(s_1+s_{1,0},m_{21}s_1+s_{2,0}))$ (refer to \eqref{eqn:affine_lambda_S}) at scale $w$, which then follows from \cite[Theorem 1.4]{Yang2}. This gives parallelograms in $\mathcal R_\Phi$ with width at least $w^{1/2}$.

If $m_{11}=w$, then decoupling at scale $w$ essentially reduces to 
\begin{equation*}
    (s_1,s_2-A(s)Q(t),A(s)(t+uQ(t)),x(s_2)).
\end{equation*}
We use decoupling for the cone to deal with the coordinate plane, and then apply \cite[Theorem 1.4]{Yang2} to deal with decoupling for $(s_2,x(s_2))$. This gives parallelograms in $\mathcal R_\Phi$ with width at least $w^{1/2}$. We omit the details.

\section{Nondegenerate decoupling}\label{sec:nondegenerate_case}

In this section, we study decoupling for the surface $\Phi=\Phi_{A,Q,u,x}$ 
\begin{equation*}
    \{(s_1,s_2-A(s)Q(t),A(s)(t+uQ(t)),x(s)):(s,t)\in [-1,1]^{2+l}\},
\end{equation*}
where $|H\Phi|=|\partial_2 x(s)|\ge K^{-1}$ in $[-1,1]^{2+l}$, which corresponds to \ref{item:nondegnerate_decoupling} of Theorem \ref{thm:degeneracy_locating_principle}. The main idea is a Pramanik-Seeger iteration.

\fbox{Notation} In this section, we omit the dependence of constants on $d,l$.

\subsection{Reducing to $|\partial_2 x|\sim 1$}
We first reduce to the case $|\partial_2 x|\sim 1$, with a loss not more than $K^{O(1)}$.

\subsubsection{Partition and dyadic decomposition}
We first apply a dyadic decomposition, obtaining subsets $|\partial_2 x|\sim \kappa$ where $\kappa\ge K^{-1}$.

\subsubsection{Sublevel set decoupling}
We apply Theorem \ref{thm:general_sublevel_set} at scale $\kappa$ to decouple the coordinate space into parallelograms of the form $\bar\Phi_{A,Q,u}(S\times T)$ with $\eta_{S}=l(T)^2$.

\subsubsection{Rescaling}
We rescale $S$ and $T$ to $[-1,1]^2$ and $[-1,1]^l$, respectively, using the same argument in the proof of Lemma \ref{lem:rescaling_invariance} (before Section \ref{sec:sigma_flat_rescaling}). The rescaled surface now becomes of the form
 \begin{equation}\label{eqn:Oct_14}
    \{(s_1,s_2-\tilde A(s)\tilde Q(t),\tilde A(s)(t+\tilde u\tilde Q(t)),  \tilde x( s) ):(s,t)\in [-1,1]^{2+l}\}.
\end{equation}
Now $|\partial_2 \tilde x|\sim \kappa':=\kappa \eta_S$ over $[-1,1]^2$, so we can write
\begin{equation*}
    \tilde x(s)=y(s_1)+\kappa' z(s_1,s_2),
\end{equation*}
for some polynomials $y,z$ with bounded coefficients.

\subsubsection{Decoupling at scale $\kappa'$}
We decouple at scale $\kappa'$, so that the surface \eqref{eqn:Oct_14} becomes indistinguishable with
 \begin{equation*}
    \{(s_1,s_2-\tilde A(s)\tilde Q(t),\tilde A(s)(t+\tilde u\tilde Q(t)),  y(s_1) ):(s,t)\in [-1,1]^{2+l})\}.
\end{equation*}
By projecting into the first and the last coordinates, it suffices to decouple the polynomial curve $(s_1,y(s_1))$. By \cite[Theorem 1.4]{Yang2}, we obtain intervals $I\sub [-1,1]$.

\subsubsection{Rescaling}
Let $\overline y(s_1):=(\kappa')^{-1}(\lambda_I s_1-\text{ affine terms})\in \mathcal P_{1,d}$. We rescale $I$ to $[-1,1]$ in \eqref{eqn:Oct_14} and multiply the last coordinate by $(\kappa')^{-1}$ to obtain
\begin{equation*}
    \{(s_1,s_2-\bar A(s)\tilde Q(t),\bar A(s)(t+\tilde u\tilde Q(t)),  \overline x(s) ):(s,t)\in [-1,1]^{2+l})\},
\end{equation*}
where $\overline x(s):=(\kappa')^{-1} \overline y(s_1)+z(\lambda_I s_1,s_2)\in \mathcal P_{2,d}$. Note that now $|\partial_2 \overline x|\sim 1$. We have thus completed the reduction, at the total cost of $C_\eps K^\eps$. 

Abusing notation, it thus remains to study decoupling for the surface
\begin{equation}\label{eqn:Oct_14_01}
    \{(s_1,s_2-A(s)Q(t),A(s)(t+uQ(t)), x(s)):(s,t)\in [-1,1]^{2+l})\},
\end{equation}
where $|\partial_2 x(s)|\sim 1$ on $[-1,1]^2$.

\subsection{Affine transformation}
We still need another treatment before we use the Pramanik-Seeger iteration. By affine equivalence between the second and the last coordinates, we may transform \eqref{eqn:Oct_14_01} into
\begin{equation*}
    \{(s_1,s_2-A(s)Q(t),A(s)(t+uQ(t)), \overline x(s)+wA(s)Q(t)):(s,t)\in [-1,1]^{2+l}\},
\end{equation*}
where $\overline x(s):=x(s)-\partial_2 x(0)s_2$, and $w:=\partial_2 x(0)$. Note that $\partial_2 \overline x(0)=0$ and $|w|\sim 1$. We may of course also assume $\overline x(0)=0$ and $\partial_1 x(0)=0$ for free. Abusing notation and writing $w=1$, we may thus study decoupling for the surface
\begin{equation}\label{eqn:PS_ready}
    \{(s_1,s_2-A(s)Q(t),A(s)(t+uQ(t)), x(s)+A(s)Q(t)):(s,t)\in [-1,1]^{2+l}\},
\end{equation}
where $x$ has no affine terms, and such that
\begin{equation}\label{eqn:Oct_14_11}
    |1+\partial_2 x(s)|\sim 1,\quad \forall s\in [-1,1]^2.
\end{equation}

\subsection{Pramanik-Seeger iteration}

We state the decoupling theorem we would like to prove.
\begin{thm}\label{thm:nondegenerate_decoupling_PS}
    Let $A\in \mathcal A$, $Q\in \mathcal Q$, $u\in [-1,1]^l$, $x\in \mathcal P_{2,d}$ with no affine terms and obeying \eqref{eqn:Oct_14_11}, and let $\Phi=\Phi_{A,Q,u,x}$ be as in \eqref{eqn:PS_ready}. Then for $\eps>0$ 
 and $0<\delta\ll 1$, there exist a boundedly overlapping cover $\mathcal S_\delta$ of $[-1,1]^2$ by $(x,\delta)$-flat parallelograms $S$ with width at least $\delta$, and a partition $\mathcal T_\delta$ of $[-1,1]^l$ by cubes $T$ of side length $\delta^{1/2}$, such that for every $2\le p\le \frac{2(l+4)}{l+2}$, the parallelogram $[-1,1]^{2+l}$ can be $\Phi$-$\ell^p(L^p)$ decoupled into the boundedly overlapping parallelograms $\bar\Phi(S\times T)$, at the cost of $C_\eps\delta^{-\eps}$ for every $\eps>0$.
\end{thm}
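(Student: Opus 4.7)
The proof is a Pramanik-Seeger induction on scale, combining the rescaling invariance (the second special case of Lemma~\ref{lem:rescaling_invariance}, which preserves the normalisation $\tilde w = 1$), the sublevel set decoupling of Theorem~\ref{thm:general_sublevel_set} applied to $\det D^2 x \in \mathcal P_{2,O(d)}$, and Theorem~\ref{thm:Bourgain_Demeter} in the nondegenerate regime. Let $D(\delta)$ denote the supremum of the $\ell^p(L^p)$ decoupling constant claimed in the theorem over admissible $\Phi \in \mathcal M$ (those with $v = w = 1$, $x(0)=\partial_1 x(0)=\partial_2 x(0)=0$, and $|1+\partial_2 x|\sim 1$). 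The goal is $D(\delta)\le C_\eps\delta^{-\eps}$, which I prove by bootstrap, inductively assuming $D(\sigma)\le A_\eps\sigma^{-\eps}$ for all $\sigma\ge\delta^{1/2}$.

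Fix $\sigma := \delta^{1/2}$. By Proposition~\ref{prop:Gaussian_curvature_general} applied with $v=w=1$, the Gaussian curvature of $\Phi$ is $\sim |1+\partial_2 x|^l|\det D^2 x|\sim|\det D^2 x|$. Dyadically split $[-1,1]^2$ according to $|\det D^2 x|\sim\kappa$ for $\kappa\in\{2^{-j}\}_{j\lesssim\log(1/\sigma)}$, plus a ``very degenerate'' piece $\{|\det D^2 x|\le\sigma\}$. On each piece, apply Theorem~\ref{thm:general_sublevel_set} to the polynomial $\det D^2 x$ at scale $\kappa':=\max(\kappa,\sigma)$ to cover by boundedly overlapping $\bar\Phi(S\times T)$ with $\eta_S=l(T)^2\sim\kappa'$, at cost $\lesssim_\eps(\kappa')^{-\eps/2}$ (with the logarithmic dyadic loss absorbed). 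For each such piece, first perform an affine equivalence between the second and last coordinates to arrange $\partial_2 x(s_0)=0$ at the centre $s_0$ of $S$ (permissible since $|1+\partial_2 x(s_0)|\sim 1$, so the resulting $w$ stays $\sim 1$ and can be rescaled back to $1$), then invoke the second special case of Lemma~\ref{lem:rescaling_invariance} to obtain a rescaled surface $\tilde\Phi\in\mathcal M$ with the same normalisations. On nondegenerate dyadic pieces ($\kappa\ge\sigma$), $\tilde\Phi$ has Gaussian curvature $\sim 1$; after graphing the last coordinate via the implicit function theorem, Theorem~\ref{thm:Bourgain_Demeter} decouples $\tilde\Phi$ at scale $\delta/\kappa$ at cost $\lesssim_\eps(\delta/\kappa)^{-\eps/2}$. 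On the very degenerate piece, the induction hypothesis applied to $\tilde\Phi$ at scale $\delta/\sigma=\delta^{1/2}$ yields cost $\le A_\eps\delta^{-\eps/2}$. Transporting back via Lemma~\ref{lem:equivalent_decoupling_A} and summing gives
\[
D(\delta)\le C_\eps\sigma^{-\eps/2}\cdot A_\eps\delta^{-\eps/2}=C_\eps A_\eps\delta^{-3\eps/4}.
\]
For $\delta\le\delta_0(\eps,A_\eps)$ small enough this is $\le A_\eps\delta^{-\eps}$, closing the bootstrap; a trivial decoupling handles $\delta>\delta_0$. A final refinement step subdivides each $T$ into cubes of side exactly $\delta^{1/2}$ to match the statement of the theorem, at $O(1)$ extra cost.

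The principal technical obstacle is verifying that the rescaled surface $\tilde\Phi$ still lies in the admissible family, in particular that $|1+\partial_2\tilde x|\sim 1$. By identity~\eqref{eqn:Oct_14_10}, $|\tilde v\,\partial_2\tilde x|$ is controlled by the variation $\sup_{s\in S}|x(s)-x(s_0)|$ of $x$ over $S$; a preliminary partition of $[-1,1]^2$ into $O(1)$ sub-cubes on which $x$ varies by less than a fixed small constant (at $O(1)$ cost) ensures $|\partial_2\tilde x|\le 1/2$, preserving the normalisation throughout the iteration. A secondary check is that Theorem~\ref{thm:Bourgain_Demeter}, which is stated for graphs $\phi(x)+\psi(y)$, applies to the parametric surface $\tilde\Phi$; this is handled by a local change of coordinates (graphing via the implicit function theorem, using that $\bar{\tilde\Phi}$ has Jacobian bounded away from zero) after which the Hessian of the graph function block-diagonalises into $D^2\tilde x$ and $\tilde A(s)D^2\tilde Q(t)$ (up to $O(\sigma)$ off-diagonal cross terms from $\tilde A(s)\tilde Q(t)$), both blocks nondegenerate by the dyadic assumption $|\det D^2\tilde x|\sim 1$ and by $\tilde Q\in\mathcal Q$.
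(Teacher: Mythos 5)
Your approach is fundamentally different from the paper's proof, and it contains a gap at the central step. The paper proves Theorem~\ref{thm:nondegenerate_decoupling_PS} by a genuine Pramanik--Seeger iteration with a small step $\sigma=\delta^{1-\eps}$: after decoupling to scale $\sigma$, rescaling via Lemma~\ref{lem:rescaling_invariance} (second special case), and the change of variables $t'=\tilde A(s)(t+\tilde u\tilde Q(t))$, $s_2'=s_2-\tilde v\tilde A(s)\tilde Q(t)$, the surface becomes within $O(\delta^\eps)$ of the \emph{sum surface} $\tilde x(s_1,s_2')+\tilde Q(t')$ --- thanks to the smallness estimate \eqref{eqn:Oct_14_12} --- and then the already-known decoupling for bivariate sum surfaces (Proposition~\ref{prop:refine_IJ_smooth}, which bootstraps off the uniform bivariate polynomial decoupling of \cite{LiYang2023,LiYang2024}) is invoked at scale $\delta^\eps$. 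Crucially, the paper never needs to normalise or even look at $\det D^2 x$; the approximation-by-sum-surface step offloads all the curvature analysis to the already-established two-dimensional result.

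Your proposal instead attempts a direct degeneracy-locating argument on $\det D^2 x$. The gap is the claim that on the dyadic piece $|\det D^2 x|\sim\kappa$, after applying Theorem~\ref{thm:general_sublevel_set} to the polynomial $\det D^2 x$ and then rescaling by Lemma~\ref{lem:rescaling_invariance}, the resulting $\tilde\Phi$ has Gaussian curvature $\sim 1$, so that Theorem~\ref{thm:Bourgain_Demeter} applies at cost $(\delta/\kappa)^{-\eps/2}$. This does not follow. The rescaling in Lemma~\ref{lem:rescaling_invariance} is designed so that $\tilde x\in\mathcal P_{2,d}$ (i.e.\ $\sup|\tilde x|\lesssim 1$); it does \emph{not} ensure $|\det D^2\tilde x|\sim 1$. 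Concretely, $D^2\tilde x=\sigma_f^{-1}\lambda^T(D^2 x)\lambda$, so $\det D^2\tilde x=\sigma_f^{-2}(\det\lambda)^2\det D^2 x$, and the flatness scale $\sigma_f$ and the dimensions of $S$ need not conspire to make this $\sim 1$. For instance, with $x(s_1,s_2)=s_1^2+\kappa s_2^2$ one has $\det D^2 x\equiv 4\kappa$ constant; Theorem~\ref{thm:general_sublevel_set} applied to $\det D^2 x-\kappa$ returns the trivial partition $S=[-1,1]^2$, the rescaling is the identity, and $\det D^2\tilde x\sim\kappa\ll 1$. You would then be forced to invoke Theorem~\ref{thm:Bourgain_Demeter} with $K\sim\kappa^{-1}$, which contributes a $\kappa^{-C'_k\eps}$ loss that you have not accounted for, and more seriously you lose the uniformity needed for the bootstrap, since $\kappa$ can be as small as $\sigma=\delta^{1/2}$. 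Normalising the Hessian determinant for bivariate functions is exactly the content of the difficult theorem in \cite{LiYang2023}, which the paper cites rather than re-derives; your argument implicitly tries to re-derive it on the fly without the requisite machinery. Your handling of the admissibility preservation $(\tilde w=1$, \eqref{eqn:Oct_14_10}, small variation of $\partial_2 x$) is correct and matches the paper, but the nondegenerate branch of your dichotomy does not close as stated.
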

The rest of this section is devoted to the proof of Theorem \ref{thm:nondegenerate_decoupling_PS}.

\subsubsection{Trivial partition}
Given $\eps,\delta\ll 1$. Losing $\delta^{-\eps}$, we may trivially decouple $[-1,1]^2$ into smaller squares $S_0$ of side length $\delta^\eps$, and decouple $[-1,1]^l$ into smaller cubes $T_0$ of side length $\delta^{\eps/2}$.

Denote by $D(\delta)$ the best constant of the required decoupling in Theorem \ref{thm:nondegenerate_decoupling_PS}, except that $[-1,1]^2$ and $[-1,1]^l$ are replaced by $S_0$ and $T_0$, respectively. It suffices to show that $D(\delta)\lesssim \delta^{-\eps}$.

\subsubsection{Applying decoupling at larger scale}
Let $\sigma:=\delta^{1-\eps}$. We will apply the definition of $D(\sigma)$ to obtain $S'\in \mathcal S_\sigma$ and $ T'\in \mathcal T_\sigma$ as prescribed in the theorem. We need to further decouple at scale $\delta$, now for the surface
\begin{equation}\label{eqn:Oct_14_02}
    \{(s_1,s_2-A(s)Q(t),A(s)(t+uQ(t)), x(s)+A(s)Q(t)):s\in S',t\in T'\}.
\end{equation}

\subsubsection{Affine invariance}
In view of \eqref{eqn:Oct_14_11}, by affine invariance between the second the last coordinates in \eqref{eqn:Oct_14_02}, we may assume that $\partial_2 x(s_0)=0$, where $s_0$ is the centre of $S'$.

\subsubsection{Rescaling}

We use exactly the same rescaling as in the proof of Lemma \ref{lem:rescaling_invariance} to rescale $ S'$ to $[-1,1]^2$ and $ T'$ to $[-1,1]^l$, arriving at decoupling 
\begin{equation}\label{eqn:Oct_11_PS}
    \{(s_1,s_2-\tilde v \tilde A(s)\tilde Q(t),\tilde A(s)(t+\tilde u\tilde Q(t)),\tilde x(s)+\tilde w \tilde A(s)\tilde Q(t)  ):(s,t)\in [-1,1]^{2+l}\}
\end{equation}
at scale $\delta \sigma^{-1}=\delta^\eps$. Note that we are in the case $v=1$, $w=1$, $\tau^2=l(T)^2=\sigma$. Thus, the last statement of Lemma \ref{lem:rescaling_invariance} says that $\tilde w=1$, and we have
\begin{equation*}
    \sup_{s\in [-1,1]^2}|\tilde v \partial_2 \tilde x(s)|=\sup_{s\in S'} |\partial_2 x(s)-\partial_2 x(c)|.
\end{equation*}
But since $S'\sub S_0$, it has diameter at most $\delta^\eps$. Thus, 
\begin{equation}\label{eqn:Oct_14_12}
    \sup_{s\in [-1,1]^2}|\tilde v \partial_2 \tilde x(s)|\lesssim \delta^\eps.
\end{equation}

\subsubsection{Approximation}

We now let $t'=\tilde A(s)(t+\tilde u\tilde Q(t))$, and $s_2':=s_2-\tilde v \tilde A(s)\tilde Q(t)$, so that \eqref{eqn:Oct_11_PS} becomes
\begin{equation*}
    (s_1,s_2',t',\tilde x(s_1,s_2'+\tilde v \tilde A(s)\tilde Q(t))+ w\tilde A(s) \tilde Q(t)   ).
\end{equation*}
By direct computation using \eqref{eqn:Oct_14_12}, we see that the last coordinate is within $O(\delta^\eps)$ of $\tilde x(s_1,s_2')+Q(t')$.

\subsubsection{Applying known decoupling result}

We can now apply, for instance, Proposition \ref{prop:refine_IJ_smooth} at scale $\delta^{\eps}$ to the following surface
\begin{equation*}
    (s_1,s_2',t',\tilde x(s_1,s_2')+\tilde Q(t')),
\end{equation*}
so that the coordinate space $[-1,1]^{2+l}$ is decoupled into parallelograms of the form $S\times T$, where each $S$ is $(\tilde x,\delta^\eps)$-flat, and $T$ is a cube of side length $(\delta^{\eps})^{1/2}$, at the cost of $C_\eps\delta^{-\eps^2}$. Moreover, we can guarantee that $\eta_S\ge l(T)^2=\delta^\eps$, by \cite[Theorem 5.22]{LiYang2024}. Changing variables back, this ensures that $\bar \Phi'(S\times T)\in \mathcal R_{ \Phi'}$, where 
\begin{equation*}
    \bar\Phi'(s_1,s_2,t):=(s_1,s_2+\tilde v\tilde A(s)\tilde Q(t),\tilde A(s)(t+\tilde u\tilde Q(t))).
\end{equation*}
Note that we treated $t'$ as if it were $t$, which does not do any harm, since the change of variables between $t'$ and $t$ is a diffeomorphism such that the $C^2$ norms of both itself and its inverse are bounded by $O(1)$, and the $t'$ space is decoupled into cubes. A more rigorous treatment is similar to Section \ref{sec:t'=t}.

\subsubsection{Iteration}

The above argument implies a bootstrap inequality of the form:
\begin{equation*}
    D(\delta)\le D(\delta^{1-\eps}) C_{\eps} \delta^{-\eps^2}. 
\end{equation*}
Iterating this bootstrap inequality for $\sim \eps^{-1}$ times gives us $D(\delta)\lesssim_\eps \delta^{-2\eps}$. This finishes the proof of Theorem \ref{thm:nondegenerate_decoupling_PS} and thus nondegenerate decoupling.

\section{Decoupling for homogeneous functions of nonzero degree}\label{sec_homo_nonzero}
In this section, we prove Theorem \ref{thm:homo}. We first prove the general case, and discuss the special convex case at the end of this section.

Let $\phi(x_1,x_2,x_3)$ be homogeneous of degree $\alpha\ne 0$ in the annulus $\{x\in \R^3:1\le |x|\le 2\}$, namely, $\phi(rx)=r^\alpha \phi(x)$.

\subsection{Preliminary reductions}\label{sec:reduction_homo_nonzero}
First, we may partition the annulus into several pieces, and assume $x_3\sim 1$ by relabelling. Next, write $P(x_1,x_2)=\phi(x_1,x_2,1)$, and thus we may write
\begin{equation}\label{eqn_Apr_30_01}
    \phi(x_1,x_2,x_3)=x_3^\alpha \phi\left(\frac {x_1} {x_3},\frac {x_2}{x_3},1\right)=x_3^\alpha P\left(\frac {x_1} {x_3},\frac {x_2} {x_3}\right).
\end{equation}
We then change variables
\begin{equation*}
    r=x_3,\quad s_1=\frac{x_1} {x_3},\quad s_2=\frac{x_2} {x_3},
\end{equation*}
and so we can rewrite
\begin{equation*}
    (x_1,x_2,x_3,\phi(x_1,x_2,x_3))=(rs_1,rs_2,r,r^\alpha P(s_1,s_2)).
\end{equation*}
Lastly, we can apply the polynomial approximation argument in Section 2 of \cite{LiYang2023} and assume $P\in \mathcal P_{2,d}$. Then it suffices to prove a uniform decoupling result, with the cost of decoupling independent of the coefficients of $P$.

\fbox{Notation} All implicit constants below in this section are allowed to depend on $d,\alpha$.

\subsection{A sublevel set decoupling}
We will need an easier analogue of the sublevel set decoupling in Theorem \ref{thm:general_sublevel_set}, which decouples sublevel sets of the form
\begin{equation*}
    H_\delta:=\{(rs_1,rs_2,r):r\in [1,2], (s_1,s_2)\in [-1,1]^2,|P(s_1,s_2)|<\delta\}.
\end{equation*}

\begin{thm}\label{thm:homo_sublevel_set}
Let $P\in \mathcal P_{2,d}$. For $0<\delta\ll 1$, there exists a boundedly overlapping cover $\mathcal S_\delta$ of $\{s\in [-1,1]^2:|P(s)|<\delta\}$ by rectangles $S$ with width at least $\delta$, such that the following holds:
\begin{enumerate}    
    \item $|P|\lesssim \delta$ on each $S$.

    \item For every $2\le p\le 6$ and every $\eps\in (0,1]$, the set $H_\delta$ can be $\ell^2(L^p)$ (and hence $\ell^p(L^p)$) decoupled into
    \begin{equation*}
        H_\delta(S):=\{(rs_1,rs_2,r):r\in [1,2], (s_1,s_2)\in S\}
    \end{equation*}
    at the cost of $C_\eps \delta^{-\eps}$, where $C_\eps$ depends only on $\eps,d,\alpha$.
    \end{enumerate}
\end{thm}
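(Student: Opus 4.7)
The plan is to mimic the strategy of Theorem \ref{thm:general_sublevel_set}, specialised to the much simpler ambient setting of the cone in $\R^3$: at each stage where Section \ref{sec:sublevel_set} invokes the combination of Theorems \ref{thm:radial_principle} and \ref{thm:Bourgain_Demeter}, I would instead invoke the Bourgain--Demeter $\ell^2(L^p)$ cone decoupling \cite{BD2015} at the critical exponent $p\le 6$. The crucial geometric observation is that for any rectangle $S\subset[-1,1]^2$ with centre $(c_1,c_2)$, the set $H_\delta(S)$ is equivalent to a parallelepiped in $\R^3$ whose long axis is the radial vector $(c_1,c_2,1)$; as $S$ varies, these parallelepipeds are precisely the cap family to which cone decoupling applies.

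I would induct on the degree $d$ of $P$. In the base case $d=0$, $P$ is constant, and either $\{|P|<\delta\}$ is empty or equals $[-1,1]^2$; in the latter subcase I would tile $[-1,1]^2$ by squares of side $\delta^{1/2}$ so that each $H_\delta(S)$ is a standard cone cap, and invoke cone decoupling directly. For $d\ge 1$, I would perform the dyadic decomposition mirroring \eqref{eqn:G1G2G3}, splitting $\{|P|<\delta\} = G_1\cup G_2\cup G_3$ according to the dyadic sizes of $\partial_1 P$ and $\partial_2 P$. On the quasi-univariate regions $G_1$ and $G_2$, $P$ is effectively univariate at scale $\delta$, so by the fundamental theorem of algebra each region decomposes into $O_d(1)$ strips; the cone over each strip is decoupled into cone caps via a rescaled cone decoupling. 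On the main region $G_3$ I would apply the pseudo-polynomial machinery of Section \ref{sec:pseudo-polynomials}: the level sets of $P$ inside $G_3$ are graphs of pseudo-polynomials $g$, whose second derivative is dyadically decomposed and whose parameter interval is rescaled via Corollary \ref{cor:I_2} and Proposition \ref{prop:g'_sim_sigma}, with cone decoupling applied at each rescaled scale to produce the rectangular cover $\mathcal{S}_\delta$ satisfying $|P|\lesssim\delta$ on each rectangle.

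The main obstacle, parallel to that of Section \ref{sec:sublevel_set}, will be the asymmetric subcase $\sigma_2\ll\sigma_1$ of $G_3$, in which the roles of $s_1,s_2$ are not interchangeable and one must invert the pseudo-polynomial via Proposition \ref{prop:inverse_pseudo} to reduce back to the easier symmetric subcase $\sigma_1\lesssim\sigma_2$. Fortunately, each rescaling step is affine (hence preserves decoupling constants), and each application of cone decoupling is $\ell^2(L^p)$ at $p\le 6$, so the $\ell^2(L^p)$ strength is preserved throughout; the usual bootstrap should close the induction with the final constant $C_\varepsilon\delta^{-\varepsilon}$ depending only on $\varepsilon,d,\alpha$.
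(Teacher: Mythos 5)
Your overall plan---induction on degree $d$, dyadic decomposition of $\{|P|<\delta\}$ by the sizes of $\partial_1 P$ and $\partial_2 P$, reduction of the quasi-univariate regions to implicit-function curves, and cone decoupling as the base tool---matches the paper's proof. However, you have missed the single observation that makes this theorem drastically simpler than Theorem \ref{thm:general_sublevel_set}, and your claim that \emph{``the roles of $s_1,s_2$ are not interchangeable''} is in fact false here. Swapping $s_1\leftrightarrow s_2$ in $H_\delta(S)=\{(rs_1,rs_2,r)\}$ corresponds to swapping the first two coordinates of $\R^3$, an isometry, so decoupling constants are preserved verbatim. Hence in the main region $G_3$ one may always reduce, by a coordinate swap, to $\sigma_1\lesssim\sigma_2$, and the entire asymmetric subcase $\sigma_2\ll\sigma_1$ disappears. (The asymmetry in Theorem \ref{thm:general_sublevel_set} is genuine because the parametrisation $\bar\Phi(s_1,s_2,t)=(s_1,\,s_2-A(s)Q(t),\,A(s)(t+uQ(t)))$ treats $s_1$ and $s_2$ structurally differently; no analogous constraint exists for the $\R^3$ cone.) The paper further observes that general rotations and anisotropic rescalings of the $(s_1,s_2)$-plane lift to affine bijections of $\R^3$ acting on $H_\delta$, and uses these to normalise $S$ to axis-parallel form and collapse the main case to the implicit-function base case $|\nabla P|\sim 1$.

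Your alternative route via Proposition \ref{prop:inverse_pseudo} is not wrong in principle, but it is far longer than needed, and it would require genuine adaptation: the asymmetric argument in Section \ref{sec:sublevel_set} repeatedly decouples an auxiliary $t$-variable into cubes of side $|I_0|^{1/2}$ at intermediate scales, and that variable has no counterpart in the three-dimensional cone---you would have to rephrase each of those intermediate decouplings in terms of the radial variable $r$. I would encourage you to instead record the $s_1\leftrightarrow s_2$ symmetry and the affine equivariance of cone decoupling under rotations and anisotropic dilations, which together give the proof in a few lines.
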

To prove this theorem, we will follow a similar proof to that of \cite[Section 4.3]{LiYang2023}, and we just give the main ideas. 

\subsubsection{Case of implicit functions}
We first solve the case when $|\partial_1 P|\sim 1$ over $[-1,1]^2$. In this case, the set $|P(s_1,s_2)|<\delta$ is essentially the $\delta$-neighbourhood of the graph of a pseudo-polynomial $s_2=g(s_1)$, and so we just need to decouple the $\delta$-neighbourhood of
\begin{equation*}
    \{(rs_1,rg(s_1),r):r\in [1,2]:s_1\in [-1,1]\}.
\end{equation*}
Now we relabel the variables $t=s_1$, $s=r\in [1,2]$, $r(s)=s$ and rearrange the variables, so that the surface above becomes
\begin{equation*}
    (s,st,sg(t)).
\end{equation*}
But then this can be handled by Theorem \ref{thm:radial_principle} and \cite[Theorem 1.5]{Yang2}.

\subsubsection{Induction on degree}
We induce on the degree $d$ of the polynomial $P$. The case $d=0$ is trivial. Assume the decoupling result holds for polynomials $P$ of degree at most $d-1$.

Let $P$ have degree at most $d$. Then the polynomials $\partial_1 P$ and $\partial_2 P$ have degree at most $d-1$, so we may apply the induction hypothesis to decouple into rectangles $S$ satisfying one of the following three conditions:
\begin{align*}
    &\{|\partial_2 P|<\delta\}\\
    &\{|\partial_1 P|<\delta,\quad |\partial_2 P|\sim \sigma_2\}\\
    &\{|\partial_1 P|\sim \sigma_1,\quad |\partial_2 P|\sim \sigma_2\}
\end{align*}
where $\sigma_1,\sigma_2\ge \delta$ are dyadic constants.

The case $|\partial_2 P|<\delta$ is easy to handle, since it again reduces to the surface $(rs_1,rg(s_1),r)$ which we just studied. The case $|\partial_1 P|<\delta$ is the same by symmetry. 

For the main case $|\partial_1 P|\sim \sigma_1,|\partial_2 P|\sim \sigma_2$, we may assume by symmetry that $\sigma_1\le \sigma_2$. Then we perform a rotation to assume $S$ is axis parallel, and in $S$ we have either $|\partial_1 P|\sim \sigma_2$ or $|\partial_2 P|\sim \sigma_2$. Assume again the latter by symmetry. Then we can rescale in the $s_2$ direction to reduce to the case $|\partial_2 P|\sim 1$, using a similar proof to that of \cite[Section 4.3.3]{LiYang2023}. This finishes the proof of Theorem \ref{thm:homo_sublevel_set}.

\subsection{Proof of decoupling for homogeneous functions}\label{sec_homo}
Recall we have reduced to decoupling the surface
\begin{equation*}
    \{(rs_1,rs_2,r,r^\alpha P(s_1,s_2)):r\in [1,2],(s_1,s_2)\in [-1,1]^2\},
\end{equation*}
where $P\in \mathcal P_{2,d}$.

\fbox{Notation} Unless otherwise specified, all decoupling exponents below in this section are $\ell^p(L^p)$, where $2\le p\le \frac {10}3$.

\subsubsection{Dyadic decomposition}
We first dyadically decompose according to whether $|P|<\delta$ or $|P|\sim \sigma$ for some dyadic constant $\sigma\ge \delta$.

For the case $|P|<\delta$, we first apply Theorem \ref{thm:homo_sublevel_set} to decouple $H_\delta$ into rectangles $H_\delta(S)$, such that on each $S$ we have $|P|\lesssim \delta$, so that the set
\begin{equation*}
    \{(rs_1,rs_2,r,r^\alpha P(s_1,s_2)):r\in [1,2],(s_1,s_2)\in S\}
\end{equation*}
is already $O(\delta)$-flat.

For the case $|P|\sim \sigma$, our goal is to reduce to the case $\sigma= 1$. Indeed, we first apply Theorem \ref{thm:homo_sublevel_set} to $P-\sigma$ (assume by symmetry that $P$ is positive) to obtain rectangles $S$ on each of which $|P|\sim \sigma$. We then rescale $S$ to $[-1,1]^2$, and so the polynomial $\sigma^{-1}P\circ \lambda_S$ has bounded coefficients, and satisfies $|\sigma^{-1}P\circ \lambda_S|\sim 1$.

\subsubsection{Change of variables}
Now we just need to decouple the surface
\begin{equation*}
    \{(rs_1,rs_2,r,r^\alpha P(s_1,s_2)):r\in [1,2],(s_1,s_2)\in [-1,1]^2\}.
\end{equation*}
when $|P|\sim 1$ in $[-1,1]^2$. By symmetry, we just study when $P\sim 1$.

Below we will perform several changes of variables. First, we let 
\begin{equation*}
    u=r^\alpha P(s_1,s_2), \quad \beta=\alpha^{-1},\quad Q=P^{-\beta},
\end{equation*}
and move the last coordinate to the first, so that the surface becomes
\begin{equation}\label{eqn:Apr_2}
    \{(u,u^{\beta} s_1 Q(s),u^{\beta} s_2Q(s), u^{\beta} Q(s)):u\sim 1, s\in [-1,1]^2\},
\end{equation}
where $Q\sim 1$. Our next goal is to write the above in the form of $(u,u^\beta t,u^\beta\psi(t))$ for suitable $\psi(t)=\psi(t_1,t_2)$.

First, by a trivial decoupling, we partition $[-1,1]^2$ into $O(1)$ smaller squares of length $\ll 1$. By affine invariance, we may locate to the smaller square centred at $0$, so that $|s|\ll 1$. With this, we define
\begin{equation*}
    t_1=s_1 Q(s_1,s_2),\quad t_2=s_2 Q(s_1,s_2),
\end{equation*}
and we hope to rewrite $s$ as a function of $t$. To this end, consider the function $\Phi:\R^4\to \R^2$ defined by
\begin{equation*}
    \Phi(t_1,t_2,s_1,s_2)=(s_1 Q(s_1,s_2)-t_1,s_2 Q(s_1,s_2)-t_2).
\end{equation*}
The Jacobian matrix $\left(\frac{\partial \Phi}{\partial s}\right)$ is equal to
\begin{equation*}
    \begin{bmatrix}
        Q+s_1\partial_1 Q & s_1 \partial_2 Q\\
        s_2 \partial_1 Q & Q+s_2 \partial_2 Q
    \end{bmatrix}.
\end{equation*}
Since $Q\sim 1$ and $|s|\ll 1$, its determinant $\sim 1$. This means that we can also write $s_1,s_2$ to be implicit functions of $t_1,t_2$. With this, denote
\begin{equation}\label{eqn:defn_psi_homo}
    \psi(t_1,t_2):=Q(s_1(t_1,t_2),s_2(t_1,t_2)),
\end{equation}
so that \eqref{eqn:Apr_2} becomes
\begin{equation}\label{eqn:before_twice_approximation}
    \{(u,u^\beta t_1,u^\beta t_2,u^\beta \psi(t_1,t_2)):u\sim 1,|t|\le c\},
\end{equation}
where $c\ll 1$. Note that $L\psi(t)=\psi(t)-t\cdot \nabla \psi(t)\sim 1$, using $Q\sim 1$ and $|t|\ll 1$.

For some $c=c_{d,\beta}>0$, define a family $\mathcal Q_{d,\beta}$ of real-analytic functions $\psi$ as follows:
\begin{equation}
\begin{aligned}
    \mathcal Q_{d,\beta}:=&\left\{\psi|\psi:[-c,c]^2\to \R \text{ is defined via }(sP(s)^{-\beta},P(s)^{-\beta})=(t,\psi(t)),\right.\\
    &\left.P\in \mathcal P_{2,d},\,\,  P(s)\ge c_{d,\beta}\right\}.
\end{aligned}
    \end{equation}
It suffices to prove decoupling for surfaces of the form \eqref{eqn:before_twice_approximation}, for functions $\psi\in \mathcal Q_{d,\beta}$ uniform in the coefficients of $P\in \mathcal P_{2,d}$.

Given $\eps,d$. By another polynomial approximation, we may assume that $\psi\in \mathcal P_{2,d'}$, for $d'$ being the smallest integer greater than $\eps^{-1}+1$.

\subsubsection{Applying radial principle}\label{sec_homo_apply_conical}

Applying Theorem \ref{thm:radial_principle} with $r(s)=s^\beta$, we can reduce to decoupling
\begin{equation*}
    (u,t_1,t_2,u^\beta+\psi(t_1,t_2)).
\end{equation*}
This entails us to check Condition \ref{condition:decoupling}, which follows from Corollary \ref{cor:decoupling_condition_bivariate}, but with $\phi_1(x)$ and $\phi_2(y)$ replaced by $\psi(t_1,t_2)$ and $u^\beta$, respectively. Note that the function $u\mapsto u^\beta$ can be embedded into a family of functions with nonzero second derivative on $[1,2]$.

\subsection{Convex/concave case}
In this last subsection, we outline the reason why we can obtain $\ell^2(L^p)$ decoupling, $2\le p\le \frac {10}3$, when $\phi$ defines a convex surface.

When $\phi$ is concave, we have $\beta\in (0,1]$, and the eigenvalues of $D^2\psi$ must be nonpositive. Thus, $s^\beta$ is concave, and so $s^\beta+\psi(t)$ is also concave. Thus, we obtain $\ell^2$ decoupling. The case where $\phi$ is convex is similar.

\section{Appendix}\label{sec:appendix}
In this appendix, we discuss further generalisations of the uniform decoupling theorem for bivariate polynomials proved in \cite{LiYang2023}, in addition to the generalisation made in Theorem \cite[Theorem 5.22]{LiYang2024}).

\begin{prop}\label{prop:refine_IJ_smooth}
      Let $n\ge 3$ be an integer. Let $2\le p\le \frac{2(n+1)}{n-1}$, $0 < \varepsilon < 1$, $0<\delta < \sigma \ll  1$. Let $\phi\in \mathcal P_{2,d}$ and let $Q\in C^2([-1,1]^{n-3})$ satisfy $\det D^2 Q\ne 0$.
      
      Let $P_\sigma\sub [-1,1]^2$ be a $(\phi,\sigma)$-flat parallelogram, and let $Q_\sigma\sub [-1,1]^{n-3}$ be a cube of side length $\sigma^{1/2}$. Let $\mathcal Q_\delta(Q_\sigma)$ denote a tiling of $Q_\sigma$ by cubes of side length $\delta^{1/2}$. 

Then there exists a family of $(P,\delta)$-flat parallelograms $\mathcal P_\delta(P_\sigma) = \mathcal P_{\delta}(P_\sigma,\phi,\varepsilon)$ such that the following holds: 
\begin{enumerate}
    \item $P_\sigma\times Q_\sigma$ can be $(\phi(s)+Q(t))$-$\ell^p(L^p)$ decoupled into $\mathcal{K}_\delta(P_\sigma,Q_\sigma)$ at cost $ O_\varepsilon( \sigma^\eps\delta^{-\eps})$, where
    \begin{equation*}
        \mathcal K_\delta(P_\sigma,Q_\sigma):=\{P_\delta\times Q_\delta:P_\delta\in \mathcal P_\delta(P_\sigma),\,\,Q_\delta\in \mathcal Q_\delta(Q_\sigma)\}.
    \end{equation*}
    Moreover, if $D^2 \phi$ are $D^2 Q$ are positive-semidefinite, then this $\ell^p(L^p)$ decoupling can be upgraded to $\ell^2(L^p)$ decoupling. 
    \item $\cup \mathcal P_{\delta}$ covers $P_\sigma$ and is contained in $2P_\sigma$.
    \item Each $P_\delta\in \mathcal P_\delta(P_\sigma)$ has width at least $\delta$. Also, the overlap function of $\mathcal P_\delta$ is $O(1)$; in particular, $\#\mathcal P_\delta(P_\sigma)\lesssim\delta^{-2}|P_\sigma|$, so in particular, $\#\mathcal K_\delta(P_\sigma,Q_\sigma)\lesssim\delta^{-O(1)}$.
    \item For each $P_\delta \in \mathcal P_\delta(P_\sigma)$, we have
    \begin{equation}
         \sigma^{-1} (P_\sigma - c(P_\sigma)) \subseteq \delta^{-1}(P_\delta - c(P_\delta)),
    \end{equation}
    where $c(P_\sigma)$ and $c(P_\delta)$ are the centres of $P_\sigma$ and $P_\delta$ respectively. In particular, taking $P_\sigma=[-1,1]^2$, this shows that $P_\delta$ has width at least $\delta$.
\end{enumerate}
All implicit constants here (and the proof below) may only depend on the parameters $n,d,p,\eps$, $\norm{D^2 Q}_{\infty}$ and $\inf |\det D^2 Q|$.
\end{prop}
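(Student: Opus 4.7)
The plan is to reduce the smooth function $\phi$ to a polynomial by Taylor approximation, invoke the uniform bivariate polynomial decoupling of \cite[Theorem 5.22]{LiYang2024}, add in the nondegenerate $Q(t)$-factor through Theorem \ref{thm:Bourgain_Demeter}, and combine them by a tensor-product argument for the sum surface $\phi(s)+Q(t)$. Throughout, the fact that $P_\sigma$ is $(\phi,\sigma)$-flat means we are really decoupling only at the reduced scale $\delta/\sigma$ after an appropriate rescaling, which is precisely where the cost $O_\eps(\sigma^\eps\delta^{-\eps})=O_\eps((\delta/\sigma)^{-\eps})$ originates.

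First I rescale by the affine bijection $\lambda_{P_\sigma}:[-1,1]^2\to P_\sigma$. Letting $L$ be the best affine approximation of $\phi\circ\lambda_{P_\sigma}$ on $[-1,1]^2$, set $\tilde\phi:=\sigma^{-1}(\phi\circ\lambda_{P_\sigma}-L)$, so that $\norm{\tilde\phi}_{L^\infty([-1,1]^2)}\lesssim 1$ by the $(\phi,\sigma)$-flatness of $P_\sigma$. Since $P_\sigma$ has diameter at most $\delta^\eps$, the chain rule yields $|\partial^\alpha\tilde\phi|\le\sigma^{-1}\delta^{\eps|\alpha|}M_{|\alpha|}$. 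Choosing a degree $d=d(\eps,\{M_k\})$ sufficiently large, the Taylor polynomial $P$ of $\tilde\phi$ of degree $d$ at the origin obeys $|\tilde\phi-P|\le\delta/\sigma$ on $[-1,1]^2$ (we may assume $\delta$ is small enough relative to $\eps$ and the $M_k$'s, as larger $\delta$ is trivial), and after normalizing by an $O(1)$ constant we have $P\in\mathcal{P}_{2,d}$.

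Next I apply \cite[Theorem 5.22]{LiYang2024} to $P$ on $[-1,1]^2$ at scale $\delta/\sigma$. This produces a boundedly overlapping family of $(P,\delta/\sigma)$-flat parallelograms $\{\tilde R_i\}$ covering $[-1,1]^2$ at cost $O_\eps((\delta/\sigma)^{-\eps})$, and satisfying the elongation property analogous to item (4). Setting $P_{\delta,i}:=\lambda_{P_\sigma}(\tilde R_i)$ and using $|\tilde\phi-P|\le\delta/\sigma$, each $P_{\delta,i}$ is $(\phi,O(\delta))$-flat, contained in $2P_\sigma$, and inherits property (4). In parallel, I rescale $Q_\sigma$ to $[-1,1]^{n-3}$; the rescaled function $\tilde Q(\tau):=\sigma^{-1}(Q(\sigma^{1/2}\tau+c(Q_\sigma))-\text{affine})$ has $|\det D^2\tilde Q|\sim 1$, and Theorem \ref{thm:Bourgain_Demeter} decouples $[-1,1]^{n-3}$ at scale $\delta/\sigma$ into $(\delta/\sigma)^{1/2}$-cubes at cost $O_\eps((\delta/\sigma)^{-\eps})$; their preimages give the tiling $\mathcal Q_\delta(Q_\sigma)$ by $\delta^{1/2}$-cubes.

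Finally I combine the two decouplings via a tensor-product argument tailored to the sum surface: applying the $s$-decoupling first and the $t$-decoupling second (each with parameter $\eps/2$) to any function Fourier supported in $N_\delta^{\phi+Q}(P_\sigma\times Q_\sigma)$ yields the desired $\ell^p(L^p)$ decoupling into pieces indexed by $\mathcal K_\delta(P_\sigma,Q_\sigma)$, at total cost $O_\eps((\delta/\sigma)^{-\eps})=O_\eps(\sigma^\eps\delta^{-\eps})$. In the convex case, the positive semidefiniteness of $D^2\phi$ is inherited by $D^2 P$ (the leading quadratic part of the Taylor polynomial matches $D^2\phi$ at the center), and the same for $D^2 Q$ and $D^2\tilde Q$, so both constituent decouplings are $\ell^2(L^p)$ and tensor to $\ell^2(L^p)$. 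The main obstacle I anticipate is careful bookkeeping in the transfer between the rescaled polynomial problem and the original smooth problem: specifically, showing that the Taylor remainder $\delta/\sigma$ can be absorbed into the $(\phi,O(\delta))$-flatness of $P_\delta$ and that the elongation property (4) survives this transfer intact; a secondary subtlety is verifying that the tensor-product step for the sum surface genuinely multiplies the costs of the two component decouplings without any loss in the overlap function.
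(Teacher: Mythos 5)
Your proposal is correct and follows essentially the same route as the paper's proof: Taylor-expand $\phi$ to a polynomial of degree $\sim\eps^{-1}$ (accurate to $O(\delta)$ because $\mathrm{diam}(P_\sigma)\le\delta^\eps$), rescale $P_\sigma\times Q_\sigma$ by $\lambda_{P_\sigma}$ and $\lambda_{Q_\sigma}$ so that the problem becomes decoupling the sum surface $\tilde P(x)+\tilde Q(y)$ over $[-1,1]^{n-1}$ at scale $\delta/\sigma$, and then invoke \cite[Theorem 5.22]{LiYang2024} for the polynomial factor together with the nondegenerate $\tilde Q$ factor. The only cosmetic differences are the order of operations (you rescale before Taylor-expanding, the paper does the reverse) and that where you spell out a tensor-product combination of the $s$- and $t$-decouplings, the paper defers this step to the argument of \cite[Corollary 1.5]{LiYang2024}; both are the same mechanism for passing from a decoupling of each summand to a decoupling of the sum surface.
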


\begin{proof}

    Since $P_\sigma$ is $(\eta,\sigma)$-flat, it is also $(\phi,O(\sigma))$-flat. Thus, the coefficients of 
    \begin{equation*}
        \tilde \phi(x):=\sigma^{-1}(\phi(\lambda_{P_\sigma} x)-\phi(c)-\nabla \phi(c)\cdot \lambda_{P_\sigma} x)
    \end{equation*}
    are essentially bounded above by $1$. Similarly, we define
    \begin{equation*}
        \tilde Q(y):=\sigma^{-1}Q(\lambda_{Q_\sigma}y-Q(c')-\nabla Q(c')\cdot \lambda_{Q_\sigma}y),
    \end{equation*}
    where $c'$ is the centre of $Q_\sigma$. Note that $\|D^2\tilde Q\|_\infty=\norm{D^2 Q}_\infty$ and $\inf |\det D^2 \tilde Q|=\inf |\det D^2  Q|$. Rescaling by $\sigma$, the problem becomes the $(\tilde \phi(x)+\tilde Q(y))$-decoupling of $[-1,1]^{n-1}$ at scale $\sigma^{-1}\delta$ into parallelograms $\tilde P_\delta\times \tilde Q_\delta$. Combining \cite[Theorem 5.22]{LiYang2024} and a simpler argument than the proof of \cite[Corollary 1.5]{LiYang2024} (i.e. $J=1$), the cost of this decoupling is $O_{\eps}(\sigma^\eps\delta^{-\eps})$, and each parallelogram $\tilde P_\delta$ has width at least $\sigma^{-1}\delta$. Thus, if we define $P_\delta:=\lambda_{P_\sigma}(\tilde P_\delta)$, then we have
    \begin{equation*}
        \sigma^{-1}(P_\sigma-c(P_\sigma))\sub \delta^{-1} (P_\delta-c(P_\delta)).
    \end{equation*}
 \end{proof}   
Proposition \ref{prop:refine_IJ_smooth} has the following useful corollary, which allows us to verify Condition \ref{condition:decoupling}.
\begin{cor}\label{cor:decoupling_condition_bivariate}
    When $k=1,2$, $\phi_1\in \mathcal P_{k,d}$ and $\det D^2\phi_2\ne 0$, Condition \ref{condition:decoupling} holds with $\ell^p(L^p)$ decoupling, where $2\le p\le \frac{2(k+l+2)}{k+l}$. If $D^2\phi_1$ and $D^2\phi_2$ are positive-semidefinite, then this $\ell^p(L^p)$ decoupling can be upgraded to $\ell^2(L^p)$ decoupling.
\end{cor}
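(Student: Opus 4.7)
The plan is to verify Condition \ref{condition:decoupling} separately for $k=2$ and $k=1$: the former will follow directly from Proposition \ref{prop:refine_IJ_smooth}, while the latter will be handled by a two-scale scheme combining univariate smooth decoupling with Bourgain--Demeter for a cylinder.

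\textbf{Case $k=2$.} I would first partition $[-1,1]^2$ into boundedly many subsquares $P_\sigma$ of diameter at most $\delta^\eps$, and $[-1,1]^l$ into boundedly many cubes $Q_\sigma$ of side length $\sigma^{1/2}$ for some $\sigma\sim 1$; since $\phi_1$ is bounded and smooth, each such $P_\sigma$ is automatically $(\phi_1,\sigma)$-flat. Applying Proposition \ref{prop:refine_IJ_smooth} with $\phi=\phi_1$, $Q=\phi_2$ to each $P_\sigma\times Q_\sigma$ then yields a boundedly overlapping cover by products $P_\delta\times Q_\delta$, with each $P_\delta$ being $(\phi_1,\delta)$-flat and each $Q_\delta$ a cube of side length $\delta^{1/2}$ (hence $(\phi_2,O(\delta))$-flat since $\phi_2$ is smooth), at total cost $O_\eps(\delta^{-\eps})$. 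Item (4) of Proposition \ref{prop:refine_IJ_smooth} ensures all dimensions are bounded below by $\delta$. The range $p\le \frac{2(n+1)}{n-1}=\frac{2(l+4)}{l+2}=\frac{2(k+l+2)}{k+l}$ is correctly attained (with $n=k+l+1=l+3$), and the convex $\ell^2(L^p)$ upgrade carries over from the same proposition.

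\textbf{Case $k=1$.} The plan is to decouple in two scales. First, a univariate smooth decoupling --- obtained by Taylor-approximating $\phi_1$ by a polynomial of degree $\sim \eps^{-1}$ and invoking the uniform decoupling \cite[Theorem 1.4]{Yang2} for polynomial curves, in parallel with the proof of Proposition \ref{prop:refine_IJ_smooth} --- will partition $[-1,1]$ into $(\phi_1,\delta)$-flat subintervals $I$ at cost $O_\eps(\delta^{-\eps})$. On each $I$, $\phi_1$ will equal an affine function modulo $\delta$; absorbing this affine correction into the last coordinate then reduces the problem to decoupling the $\delta$-neighbourhood of the cylinder $\{(s_1,t,\phi_2(t)) : s_1\in I,\ t\in [-1,1]^l\}$. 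Rescaling $I$ to $[-1,1]$ and applying Bourgain--Demeter \cite{BD2015,BD2017} to the nondegenerate graph of $\phi_2$ (tensored trivially with the $s_1$-direction, via a routine Fourier translation-invariance argument) will then decouple this cylinder into $I\times Q_\delta$ with $Q_\delta$ a cube of side length $\delta^{1/2}$, at cost $O_\eps(\delta^{-\eps})$, valid for $p\le \frac{2(l+2)}{l}$. Since $\frac{2(l+3)}{l+1}\le \frac{2(l+2)}{l}$, the target range $p\le \frac{2(k+l+2)}{k+l}$ is covered. In the positive-semidefinite case, the $\ell^2(L^p)$ upgrades at both sub-steps compose to give the $\ell^2(L^p)$ version.

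The main technical hurdle will be the cylinder step in the $k=1$ case: verifying that decoupling the cylinder over the graph of $\phi_2$ in $\R^{l+2}$ proceeds at the same cost and $p$-range as Bourgain--Demeter for the base graph in $\R^{l+1}$. This should reduce to standard translation-invariance on the Fourier side in the $s_1$ direction, combined with a trivial $L^p$ decoupling in that direction. All remaining checks --- boundedly overlapping collections, lower bound $\delta$ on dimensions (automatic since $(\phi_1,\delta)$-flat intervals have length $\gtrsim \delta^{1/2}$ for $\phi_1$ with a nondegenerate Taylor coefficient and are longer otherwise, while cubes of side $\delta^{1/2}$ have sides $\ge \delta$), and book-keeping of the $\eps$-budget from the initial partitions --- should be routine.
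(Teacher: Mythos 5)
Your proposal is essentially correct, and the $k=2$ portion tracks the paper's (implicit) intended derivation: indeed, Corollary \ref{cor:decoupling_condition_bivariate} is stated in the paper without a separate proof, as a direct consequence of Proposition \ref{prop:refine_IJ_smooth}, and your trivial-partition-then-apply-the-proposition strategy is exactly the natural way to instantiate it. However, two things deserve comment.

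\textbf{A small but genuine gap in the $k=2$ step.} Condition \ref{condition:decoupling} requires the output parallelograms to have all dimensions bounded below by $\delta$. With your choice $\sigma\sim 1$ and $P_\sigma$ of diameter $\le\delta^\eps$, item (4) of Proposition \ref{prop:refine_IJ_smooth} only gives $\sigma^{-1}(P_\sigma-c(P_\sigma))\sub\delta^{-1}(P_\delta-c(P_\delta))$, hence dimensions of $P_\delta$ only $\gtrsim\delta\cdot\delta^\eps=\delta^{1+\eps}$, which is strictly smaller than $\delta$. To repair this, take $\sigma\sim\delta^\eps$ (so that $\mathrm{diam}(P_\sigma)\sim\sigma$); then the dimensions are $\gtrsim\delta\sigma^{-1}\cdot\delta^\eps=\delta$, the cost from the proposition remains $O_\eps(\sigma^\eps\delta^{-\eps})=O_\eps(\delta^{-\eps})$, and the trivial decoupling into the $\delta^{-O(\eps)}$ many $P_\sigma\times Q_\sigma$ still costs $\delta^{-O(\eps)}$. (Also, a cosmetic point: the initial partition is into $\delta^{-O(\eps)}$ many pieces, not ``boundedly many''; since the trivial decoupling cost is then $\delta^{-O(\eps)}$, this is harmless for the final bound.)

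\textbf{The $k=1$ case is a genuinely different (and stronger) route than the paper's.} Proposition \ref{prop:refine_IJ_smooth} is formulated with $\phi\in C^\infty([-1,1]^2)$, i.e.\ for $k=2$; there is no direct $k=1$ version in the paper. If one tried to embed $\phi_1(s_1)$ as a bivariate function constant in a dummy variable $s_2$, the ambient dimension of the sum surface becomes $n=l+3$, giving only $p\le\frac{2(l+4)}{l+2}$, which is strictly smaller than the claimed $\frac{2(l+3)}{l+1}$. Your two-step argument --- first decoupling the curve $(s_1,\phi_1(s_1))\subset\R^2$ into $(\phi_1,\delta)$-flat intervals at range $p\le 6$ using polynomial approximation plus \cite[Theorem 1.4]{Yang2}, then reducing on each such interval to the $\delta$-neighbourhood of the cylinder over the graph of $\phi_2$ and applying cylinder Bourgain--Demeter at range $p\le\frac{2(l+2)}{l}$ --- correctly attains the full claimed range $p\le\frac{2(l+3)}{l+1}$, since $\frac{2(l+3)}{l+1}\le\min\{6,\frac{2(l+2)}{l}\}$. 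The $\ell^2$ upgrade does compose cleanly across both steps under the positive-semidefinite hypothesis. In short, your $k=1$ argument supplies a proof that the paper does not explicitly give (and that is not a direct consequence of the cited Proposition \ref{prop:refine_IJ_smooth}), so this part of your proposal is a genuine and necessary addition rather than a rediscovery of the paper's route.
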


\bibliographystyle{alpha}
\bibliography{reference}

\newcommand{\etalchar}[1]{$^{#1}$}
\begin{thebibliography}{GLYZK21}

\bibitem[BD15]{BD2015}
J.~Bourgain and C.~Demeter.
\newblock The proof of the {$l^2$} decoupling conjecture.
\newblock {\em Ann. of Math. (2)}, 182(1):351--389, 2015.

\bibitem[BD17]{BD2017}
J.~Bourgain and C.~Demeter.
\newblock Decouplings for curves and hypersurfaces with nonzero {G}aussian curvature.
\newblock {\em J. Anal. Math.}, 133:279--311, 2017.

\bibitem[BDG16]{BDG2016}
J.~Bourgain, C.~Demeter, and L.~Guth.
\newblock Proof of the main conjecture in {V}inogradov's mean value theorem for degrees higher than three.
\newblock {\em Ann. of Math. (2)}, 184(2):633--682, 2016.

\bibitem[BDK20]{BDK2019}
J.~Bourgain, C.~Demeter, and D.~Kemp.
\newblock Decouplings for real analytic surfaces of revolution.
\newblock In {\em Geometric aspects of functional analysis. {V}ol. {I}}, volume 2256 of {\em Lecture Notes in Math.}, pages 113--125. Springer, Cham, [2020] \copyright 2020.

\bibitem[BGL{\etalchar{+}}20]{BGLSX}
C.~Biswas, M.~Gilula, L.~Li, J.~Schwend, and Y.~Xi.
\newblock {$\ell^2$} decoupling in {$\mathbb{R}^2$} for curves with vanishing curvature.
\newblock {\em Proc. Amer. Math. Soc.}, 148(5):1987--1997, 2020.

\bibitem[Bou13]{Bo13Tri}
J.~Bourgain.
\newblock Moment inequalities for trigonometric polynomials with spectrum in curved hypersurfaces.
\newblock {\em Israel J. Math.}, 193(1):441--458, 2013.

\bibitem[Dem20]{Demeter2020}
C.~Demeter.
\newblock {\em Fourier restriction, decoupling, and applications}, volume 184 of {\em Cambridge Studies in Advanced Mathematics}.
\newblock Cambridge University Press, Cambridge, 2020.

\bibitem[GLYZK21]{shortproofmomentcurve}
S.~Guo, Z.~K. Li, P.-L. Yung, and P.~Zorin-Kranich.
\newblock A short proof of {$\ell^2$} decoupling for the moment curve.
\newblock {\em Amer. J. Math.}, 143(6):1983--1998, 2021.

\bibitem[GLZZ22]{GLZZ2022}
C.~Gao, Z.~Li, T.~Zhao, and J.~Zheng.
\newblock Decoupling for finite type phases in higher dimensions.
\newblock 2022.
\newblock arXiv:2202.11326.

\bibitem[GMO24]{GMO24}
L.~Guth, D.~Maldague, and C.~Oh.
\newblock $l^2$ decoupling theorem for surfaces in $\mathbb{R}^3$.
\newblock 2024.
\newblock arXiv:2403.18431.

\bibitem[Joh14]{John}
F.~John.
\newblock Extremum problems with inequalities as subsidiary conditions [reprint of mr0030135].
\newblock In {\em Traces and emergence of nonlinear programming}, pages 198--215. Birkh\"{a}user/Springer Basel AG, Basel, 2014.

\bibitem[Kem21]{Kemp2}
D.~Kemp.
\newblock Decoupling via scale-based approximations of limited efficacy.
\newblock 2021.
\newblock arXiv:2104.04115.

\bibitem[Kem22]{Kemp1}
D.~Kemp.
\newblock Decouplings for surfaces of zero curvature.
\newblock {\em J. Funct. Anal.}, 283(12):Paper No. 109663, 23, 2022.

\bibitem[Kem24]{Kemp2024}
D.~Kemp.
\newblock Decoupling for ruled hypersurfaces generated by a curve.
\newblock 2024.
\newblock arXiv:2407.05225.

\bibitem[{\L}P06]{LP06}
I.~{\L}aba and M.~Pramanik.
\newblock Wolff's inequality for hypersurfaces.
\newblock {\em Collect. Math.}, (Vol. Extra):293--326, 2006.

\bibitem[{\L}W02]{LW02}
I.~{\L}aba and T.~Wolff.
\newblock A local smoothing estimate in higher dimensions.
\newblock {\em J. Anal. Math.}, 88:149--171, 2002.
\newblock Dedicated to the memory of Tom Wolff.

\bibitem[LY21]{LiYang2023}
J.~Li and T.~Yang.
\newblock Decoupling for smooth surfaces in $\mathbb {R}^3$.
\newblock {\em Amer. J. Math. (to appear)}, 2021.
\newblock arXiv:2110.08441.

\bibitem[LY22]{LiYang}
J.~Li and T.~Yang.
\newblock Decoupling for mixed-homogeneous polynomials in {$\mathbb{R}^3$}.
\newblock {\em Math. Ann.}, 383(3-4):1319--1351, 2022.

\bibitem[LY24]{LiYang2024}
J.~Li and T.~Yang.
\newblock Two principles of decoupling.
\newblock 2024.
\newblock arXiv:2407.16108.

\bibitem[Mil63]{MilnorMorseLemma}
J.~Milnor.
\newblock {\em Morse theory}, volume No. 51 of {\em Annals of Mathematics Studies}.
\newblock Princeton University Press, Princeton, NJ, 1963.
\newblock Based on lecture notes by M. Spivak and R. Wells.

\bibitem[PS07]{PS2007}
M.~Pramanik and A.~Seeger.
\newblock {$L^p$} regularity of averages over curves and bounds for associated maximal operators.
\newblock {\em Amer. J. Math.}, 129(1):61--103, 2007.

\bibitem[Wol00]{Wolff2000}
T.~Wolff.
\newblock Local smoothing type estimates on {$L^p$} for large {$p$}.
\newblock {\em Geom. Funct. Anal.}, 10(5):1237--1288, 2000.

\bibitem[Yan21]{Yang2}
T.~Yang.
\newblock Uniform {$l^2$}-{D}ecoupling in {$\mathbb{R}^2$} for polynomials.
\newblock {\em J. Geom. Anal.}, 31(11):10846--10867, 2021.

\end{thebibliography}

\end{document}